\newcommand{\R}{\mathbb{R}}
\newcommand{\N}{\mathbb{N}}
\renewcommand{\epsilon}{\varepsilon}
\newcommand{\beq}{\begin{equation}}
\newcommand{\eeq}{\end{equation}}
\newcommand{\bea}{\begin{eqnarray}}
\newcommand{\eea}{\end{eqnarray}}
\newcommand{\bean}{\begin{eqnarray*}}
\newcommand{\eean}{\end{eqnarray*}}
\newtheorem{theorem}{Theorem}
\newtheorem{lemma}[theorem]{Lemma}
\newtheorem{proposition}[theorem]{Proposition}
\newtheorem{definition}[theorem]{Definition}
\newenvironment{proof}%
{\par\noindent{\em Proof.\ }}%
{\ \hfill ~\rule{2mm}{2mm}\par\medskip}
{\par\noindent{\bf Nota:\ }}%
{\ \hfill \par\medskip}
\newenvironment{remark}%
{\par\noindent{\bf Remark.\ }}%
{\ \hfill \par\medskip}
\def\calf{\mathcal{F}}
\def\calo{\mathcal{O}}
\def\qh{\mathcal{P}}
\def\calc{\mathcal{C}}
\def\cald{\mathcal{D}}
\def\qh{\mathscr{P}}
\def\QH{\mathcal{Q}}
\def\Lequiv{\mathcal{L}}
\def\Opl{{\ell}}
\def\hgx{\widehat{g}}
\def\gx{g}
\def\tgx{\tilde{g}}
\def\hx{h}
\def\zero{{\mathbf 0}}
\def\x{{\mathbf x}}
\def\X{{\mathbf X}}
\def\p{{\mathbf p}}
\def\D{{\mathbf D}}
\def\F{{\mathbf F}}
\def\G{{\mathbf G}}
\def\P{{\mathbf P}}
\def\t{{\mathbf t}}
\def\fp{{\mathrm{f}}}
\def\co{{\mathrm{c}}}
\def\di{{\mathrm{d}}}
\def\diverg{{\mathrm{div}}}
\def\sig{{\mathrm{sig}}}
\def\det{{\mathrm{det}}}
\def\dim{{\mathrm{dim}}}
\def\Cor{{\mathrm{Cor}}}
\def\Proy{{\mathrm{Proy}}}
\def\Ker{{\mathrm{Ker}}}
\def\Range{{\mathrm{Range}}}
\def\FZERO{{{\mathbf D}_0}}
\def\bracket#1{\left[ #1 \right]}
\def\parent#1{\left( #1 \right)}
\def\llave#1{\left\{ #1 \right\}}
\def\vedo#1#2{\left(\begin{array}{c} #1 \\ #2 \end{array}\right)}
\def\fracp#1#2{{\textstyle{\frac{#1}{#2}}}}
\def\matriz#1{\parent{\begin{array} #1 \end{array} } }
\def\matriz22#1#2#3#4{\parent{\begin{array}{cc} #1&#2\\#3&#4 \end{array} } }
\def\Res#1#2{\mathrm{Res}\left[#1,#2\right]}
\def\ano{a}
\def\bno{b}
\def\bmo{c}
\begin{document}

\title{\bf Analytic integrability around a nilpotent singularity}

\author{Antonio Algaba, Crist\'obal  Garc\'{\i}a\\
\small Dept.  Matem\'aticas, Facultad de Ciencias,  Univ. of Huelva, Spain. \\
\small {\rm e--mails:} {\tt algaba@uhu.es}, {\tt cristoba@uhu.es}\\
\vspace{0.01cm}\\
Jaume Gin\'e \\
\small Departament de Matem\`atica, Inspires Research Centre,\\
\small Universitat de Lleida, Av. Jaume II, 69, 25001, Lleida, Catalonia, Spain.\\
\small {\rm e--mail:} {\tt gine@matematica.udl.cat}}
\date{}
%\date{\today}

\maketitle

\abstract{In this work it is solved the analytic integrability problem around a nilpotent singularity of a differential system in the plane under generic conditions.}

\section{Introduction and statement of the main result}

One of the most important problems in the study of a planar differential system
\bea  \label{sys1}
\dot{x}= P(x,y), \qquad \dot{y} = Q(x,y),
\eea
where $P$ and $Q$ are analytic  in a neighborhood of the origin and coprimes, is to determine when it
has a local analytic first integral defined in a neighborhood of a singular point.
Other important problem is to characterize when a singular point of system (\ref{sys1})
is a center. It is well--known that system (\ref{sys1}) has a center at a singular point only if it is monodromic and it has either linear part of center type, i.e., with imaginary eigenvalues (nondegenerate singular point), or nilpotent linear part (nilpotent singular point) or null linear part (degenerate singular point). A nondegenerate singular point is a center if and only if it has an analytic first integral around the singular point, see \cite{Lyapunov66,Poincare}. However the analytic integrability does not characterize the nilpotent or degenerate centers, see for instance \cite{AGG,CGGL,GGL,Gine05,GGL2} although some nilpotent or degenerate centers have an analytic first integral, see for instance \cite{AGG,GGL}.

\smallskip

In this work, we focus on the study of the analytic integrability for differential systems in the plane.
For differential systems with non-null linear part we have the following cases in function their eigenvalues:
if $\lambda_1 \lambda_2 \ne 0$ we have either a saddle, or node or a center type singular point. If $\lambda_1=0$ and $\lambda_2 \ne 0$ we have a saddle-node.  Finally if $\lambda_1=\lambda_2=0$ we have a nilpotent singular point.
The nodes and saddle-nodes are not analytically integrable. Based on the results commented before we know that a singular point with linear part of center type is analytically integrable if, and only if, it is a center. Moreover it is a center if, and only if, it is orbitally linearizable, see \cite{AGG,GaMa,Poincare2}.
For a saddle singular point, i.e., $\lambda_1< 0 <\lambda_2$, if $\lambda_1/\lambda_2 \not \in \mathbb{Q}$ then system (\ref{sys1}) is not analytically integrable around the singular point. If $\lambda_1/\lambda_2= -p/q \in \mathbb{Q}$ then we have a {\it resonant saddle}. A resonant saddle has an analytic first integral around the singular point if, and only if, it is orbitally linearizable, see for instance \cite{FSZ,HJ,ZY,Zoladek} and references therein. The most studied resonant saddles are resonant saddles of Lotka-Volterra systems, see \cite{CMR,CR,GR,LCC,LCL}. The local analytic integrability problem for some particular differential systems in the plane and for $n$-dimensional differential systems is studied in \cite{DRZ,LV,RXZ,Z}.

\smallskip

We now introduce some notation in order to present the main results of this work.
A scalar polynomial $f$ is quasi-homogeneous of type $\t=(t_1,t_2)\in\mathbb{N}^2$ and degree $k$ if
$f(\epsilon^{t_1}x,\epsilon^{t_2}y)=\epsilon^{k}f(x,y)$.
The vector space of quasi-homogeneous scalar polynomials of type $\t$ and degree $k$ is denoted by $\qh_{k}^\t$.
A polynomial vector field $\F=(P,Q)^T$ is quasi-homogeneous of type $\t$ and degree $k$ if $P\in\qh_{k+t_1}^\t$ and $Q\in\qh_{k+t_2}^\t$. The vector space of polynomial quasi-homogeneous vector fields of type $\t$ and degree $k$ is denoted by $\QH_{k}^\t$. Given an analytic vector field $\F$, we can write it as a quasi-homogeneous expansion corresponding to a fixed type $\t$:
\begin{equation}\label{eq2}
\F(\x)=\F_r(\x)+\F_{r+1}(\x)+\cdots=\sum_{j\geq r}\F_j ,
\end{equation}
where $\x\in\R^2$, $r\in\mathbb{Z}$ and $\F_j\in\QH_j^\t$ i.e., each term $\F_j$ is a quasi-homogeneous vector field of type $\t$ and degree $j$. Any
$\F_j\in\QH_j^{\t}$ can be uniquely written as
\beq\label{componentes}
\F_j =   \X_{\hx_j} + \mu_{j}\FZERO,
\eeq
where $ \mu_{j}=\fracp{1}{r+|\t|} \, \diverg\parent{\F_{j}}\in\qh_{j}^{\t}$,
$\hx_{j}=\fracp{1}{r+|\t|}\FZERO\wedge\F_{j} \in\qh_{j+|\t|}^{\t}$,
$\FZERO=(t_1x,t_2y)^T$, and $\X_{\hx_{j}}=\parent{-\partial \hx_{j} / \partial y ,
\partial \hx_{j}/\partial x}^T$ is the  Hamiltonian vector
field with Hamiltonian function $\hx_{j}$ (see \cite[Prop.2.7]{AlgabaNonlinearity09}).

\medskip

Notice that the condition of polynomial integrability of the first quasi-homogeneous component is a necessary condition in order to be $\F$ analytically integrable, that is:
\begin{lemma}\label{leintFintFr} Let $\F=\sum_{j\geq r}\F_j$ a vector field, $\F_j\in\QH_j^\t$. If $\F$ is analytically integrable then $\F_r$ is polynomially integrable.
\end{lemma}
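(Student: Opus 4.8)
The plan is to argue by contradiction at the level of the lowest quasi-homogeneous order. Suppose $\F$ is analytically integrable, so there is an analytic first integral $H$ with $\F H = \nabla H \cdot \F = 0$ in a neighborhood of the origin, and $H$ nonconstant. Expand $H$ as a quasi-homogeneous series with respect to the fixed type $\t$: $H = H_m + H_{m+1} + \cdots$ with $H_j \in \qh_j^\t$ and $H_m \not\equiv 0$ the lowest-order term; such an $m$ exists because $H$ is nonconstant (and we may discard the irrelevant constant term). Now plug the expansions $\F = \sum_{j \ge r}\F_j$ and $H = \sum_{j\ge m} H_j$ into $\F H = 0$ and collect terms of each quasi-homogeneous degree. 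The operator $\F_j$ acting on a type-$\t$ polynomial of degree $k$ raises the degree to $j + k - |\t|$ if we set up the grading so that $\F_r$ maps $\qh_k^\t \to \qh_{k+r-?}^\t$ consistently; in any case the lowest-degree contribution to $\F H$ is exactly $\F_r H_m$, coming from pairing the lowest-order piece of $\F$ with the lowest-order piece of $H$. Hence the lowest-degree block of the identity $\F H = 0$ reads $\F_r H_m = 0$.

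The remaining point is that $\F_r H_m = 0$ with $H_m$ a nonzero quasi-homogeneous \emph{polynomial} says precisely that $\F_r$ is polynomially integrable, with $H_m$ as a quasi-homogeneous polynomial first integral. One should check that $H_m$ is genuinely a first integral in the required sense, i.e. that it is not a trivial invariant: since $H_m \in \qh_m^\t$ is a nonzero homogeneous polynomial for the type $\t$, it is in particular nonconstant (its quasi-homogeneous degree $m$ is positive, as $H$ has no constant ambiguity once normalized), so it is an admissible first integral of $\F_r$. Therefore $\F_r$ is polynomially integrable, which is the claim.

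The only genuine subtlety, and the step I would be most careful about, is the bookkeeping of quasi-homogeneous degrees: one must verify that the product/derivation $\F H$ has a well-defined lowest quasi-homogeneous degree and that its term of that degree is \emph{exactly} $\F_r H_m$ with no cancellation possible — cancellation cannot occur here because $\F_r H_m$ is the unique contribution of minimal degree, being built from the unique minimal-degree pieces of each factor. A secondary point is ensuring $H_m$ is a valid (nonconstant, hence nontrivial) first integral; this is immediate from the definition of the quasi-homogeneous decomposition, since each $H_j$ with $j>0$ is a nonconstant polynomial whenever it is nonzero. With these observations the lemma follows.
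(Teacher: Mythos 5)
Your argument is correct and is the standard one: the paper states this lemma without proof (introducing it with ``Notice that\dots''), and the lowest quasi-homogeneous-order extraction from $\nabla H\cdot\F=0$ is exactly the intended justification. The only imprecision is in your hedged degree bookkeeping: with the paper's conventions ($\F_j\in\QH_j^\t$ means its components lie in $\qh_{j+t_1}^\t$ and $\qh_{j+t_2}^\t$) one has $\nabla H_k\cdot\F_j\in\qh_{k+j}^\t$, so the shift is $+j$ with no $|\t|$ correction; since your argument only uses that the shift is uniform in $j$ and $k$ (so the unique minimal-degree term is $\nabla H_m\cdot\F_r$ and no cancellation can occur), this does not affect the conclusion.
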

The main results of this work are the following.

\begin{theorem}\label{main}
Let $\F=\sum_{j\geq r}\F_j$, $\F_j\in\QH_j^\t$ be a nilpotent vector field such that the origin of $\dot{\x}=\F_r(\x)$ is isolated and $\F_r$ is polynomially integrable, then $\F$ is analytically integrable if, and only if, it is orbitally equivalent to $\F_r$.
\end{theorem}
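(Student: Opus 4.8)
\medskip
\noindent{\em Proof idea.\ }
The ``if'' implication is immediate: if $\F$ is orbitally equivalent to $\F_r$ --- say an analytic near--identity diffeomorphism $\Phi$ fixing the origin and an analytic $f$ with $f(\zero)\neq0$ satisfy $D\Phi^{-1}\cdot(\F\circ\Phi)=f\,\F_r$ --- then a polynomial first integral $I_r$ of $\F_r$ (which exists by hypothesis) is also a first integral of $f\,\F_r$, so $I_r\circ\Phi^{-1}$ is a non--constant analytic first integral of $\F$. From now on we assume $\F$ analytically integrable and construct the orbital equivalence with $\F_r$.

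The plan is to normalize $\F$ degree by degree along the quasi--homogeneous grading using only orbital changes, and to show that analytic integrability forces the normal form to be $\F_r$ itself. An orbital change consists of a near--identity transformation $\x\mapsto\x+\P_k(\x)$, $\P_k\in\QH_k^\t$ ($k\geq1$), together with a time rescaling $dt\mapsto(1+\mu_k(\x))\,dt$, $\mu_k\in\qh_k^\t$; its first--order effect on the component of degree $r+k$ is the linear operator
\[
\Opl_k:\QH_k^\t\times\qh_k^\t\longrightarrow\QH_{r+k}^\t,\qquad
\Opl_k(\P_k,\mu_k)=[\P_k,\F_r]+\mu_k\F_r .
\]
Fixing for each $k$ a complement $\Cor(\Opl_k)$ of $\Range(\Opl_k)$ in $\QH_{r+k}^\t$, the standard recursive scheme produces an orbital equivalence --- a priori only formal --- between $\F$ and $\F_r+\sum_{k\geq1}\G_{r+k}$ with $\G_{r+k}\in\Cor(\Opl_k)$. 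Two tasks remain: to prove that every $\G_{r+k}$ vanishes (carried out next, by induction on $k$) and that the resulting orbital equivalence converges (at the end).

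So assume $\G_{r+1}=\cdots=\G_{r+k-1}=0$, hence after $k-1$ normalizing steps $\F=\F_r+\G_{r+k}+\cdots$ with the dots denoting terms of degree $>r+k$; we must deduce $\G_{r+k}=0$. Transport the analytic first integral of $\F$ through the normalizing changes and expand it as $H=\sum_{j\geq m}H_j$, $H_j\in\qh_j^\t$, $H_m\neq0$; comparing quasi--homogeneous components in $\F\cdot\nabla H=0$ yields $\F_r\cdot\nabla H_m=0$ and, at degree $r+k+m$ (the first degree at which $\G_{r+k}$ occurs), the relation $\F_r\cdot\nabla H_{k+m}=-\,\G_{r+k}\cdot\nabla H_m$. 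Since $H$ exists, this is solvable, i.e.\ $\G_{r+k}\cdot\nabla H_m\in\Range(\F_r\cdot\nabla)$, where $\F_r\cdot\nabla$ denotes the Lie derivative acting on quasi--homogeneous scalar polynomials. The crux of the proof is the identity
\[
\Range(\Opl_k)=\llave{\,\G\in\QH_{r+k}^\t:\ \G\cdot\nabla H_m\in\Range(\F_r\cdot\nabla)\,},
\]
of which only the inclusion ``$\supseteq$'' is needed (``$\subseteq$'' is elementary, since $[\P,\F_r]\cdot\nabla H_m=\P\cdot\nabla(\F_r\cdot\nabla H_m)-\F_r\cdot\nabla(\P\cdot\nabla H_m)=-\F_r\cdot\nabla(\P\cdot\nabla H_m)$ and $\mu\F_r\cdot\nabla H_m=0$, both using $\F_r\cdot\nabla H_m=0$). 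Granting the identity, $\G_{r+k}\in\Cor(\Opl_k)\cap\Range(\Opl_k)=\{0\}$, which closes the induction. To establish ``$\supseteq$'' one computes the cokernel of $\Opl_k$ through the splitting $\F_j=\X_{\hx_j}+\mu_j\FZERO$ --- exploiting that $\hx_r$ is an inverse integrating factor of $\F_r$, since $\F_r\cdot\nabla\hx_r=(r+|\t|)\mu_r\,\hx_r$ while $\diverg\F_r=(r+|\t|)\mu_r$ --- and matches it with the cokernel of the scalar operator $\F_r\cdot\nabla$: polynomial integrability of $\F_r$ forces $\Ker(\F_r\cdot\nabla)$ to consist precisely of the quasi--homogeneous polynomials in a primitive first integral, while isolatedness of the origin makes the relevant dimensions coincide, leaving no resonant term unmatched (a divisibility argument covering the case where $H_m$ is a proper power of a primitive first integral is absorbed here). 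I expect this cokernel computation to be by far the main obstacle; the rest is bookkeeping along the grading.

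Finally, the (so far formal) orbital equivalence must be shown to converge, and it is here that analytic --- rather than merely formal --- integrability of $\F$ is used: both $\F$ and $\F_r$ carry analytic first integrals whose quasi--homogeneous principal parts are proportional, and the normalizing transformation can be chosen compatible with the corresponding foliations by level curves, so that the standard majorant estimates close up. One thus obtains an analytic diffeomorphism $\Phi$ and an analytic $f$ with $f(\zero)\neq0$ satisfying $D\Phi^{-1}\cdot(\F\circ\Phi)=f\,\F_r$; that is, $\F$ is orbitally equivalent to $\F_r$.
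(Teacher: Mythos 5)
Your architecture is sound and in fact mirrors the paper's: normalize orbitally degree by degree, and use the transported first integral to kill each residual term lying in a fixed complement of the range of the homological operator. The ``if'' direction and the elementary inclusion $\Range\parent{\Opl_k}\subseteq\llave{\G:\G\cdot\nabla H_m\in\Range\parent{\F_r\cdot\nabla}}$ are fine. But the reverse inclusion --- which you correctly identify as the crux and on which your whole induction rests --- is precisely where the entire technical content of the paper lives, and you assert it rather than prove it. Unpacked, it amounts to two nontrivial facts: (1) under a genericity condition on the dissipative coefficient $d$ of $\F_r$, the corange of the orbital homological operator reduces to $\Cor(\ell_{r+k})\FZERO$, where $\ell$ is the scalar Lie derivative along $\F_r$ (the conservative block must have trivial kernel and cokernel; this is Lemma \ref{lebasepkt} combined with Theorem \ref{teoFNlequivFn}); and (2) the cyclicity $I_M\,\Cor(\ell_{k})\cap\Range(\ell_{k+M})=\llave{0}$ (Theorem \ref{ciclicidadl}), which in turn requires knowing that $\nabla p\cdot\F_r$ divisible by $I_M$ forces $p$ divisible by $I_M$ (Proposition \ref{proNablapenIM}, resting on the irreducibility Lemmas \ref{irred1} and \ref{irredfm}). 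None of this is ``bookkeeping'': for instance (1) fails outright for the reducible leading part $(y,bx^ny)^T$, which is exactly why the theorem carries the hypothesis that the origin of $\dot{\x}=\F_r(\x)$ is isolated.

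A second gap: your scheme needs $H_m=I_M$ with $I_M$ a \emph{primitive} polynomial first integral of $\F_r$, and needs the genericity on $d$ under which (1) holds; neither is automatic from the hypotheses. The paper first classifies the admissible leading parts (Proposition \ref{propreFN}), uses polynomial integrability of $\F_r$ to exclude one of them and to force $d=\fracp{m_1-m_2}{m_1+m_2}$, whence $|d|<1$ and the genericity holds for free (Proposition \ref{pronilint}); and it derives $H=I_M+\cdots$ from the existence of exactly two formal invariant branches through the origin (Theorems \ref{teocuinvnilp} and \ref{teointprimeranilp}) --- your parenthetical ``divisibility argument \ldots absorbed here'' is exactly the extraction of an $l$-th root from $C_1^{lm_1}C_2^{lm_2}u$, and it cannot even be formulated without the invariant-curve theorem. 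Finally, your closing convergence discussion is misdirected: the paper does not prove convergence of the normalizing transformation by majorants; it works formally throughout and invokes Mattei--Moussu to pass between formal and analytic integrability. In short, the skeleton is the right one, but every load-bearing step is missing.
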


\begin{remark}
The condition that the origin of $\dot{\x}=\F_r(\x)$ is an isolated equilibrium is generic, in fact the only remaining case without studying is $\F=(y,bx^ny)^T+\cdots$, $b\neq 0$, see Proposition \ref{propreFN}. The characterization of the integrability in this case is an open issue that will be addressed in a future work.
\end{remark}
The following theorem characterizes the analytic integrability of a nilpotent vector field through the existence of a Lie symmetry.

\begin{theorem}\label{main2}
Let $\F=\sum_{j\geq r} \F_j$, $\F_j\in\QH_j^\t$, be a nilpotent vector field such that the origin of $\dot{\x}=\F_r(\x)$ is isolated and $\F_r$ is polynomially integrable, then $\F$ is analytically integrable if, and only if, there exist a vector field $\G=\sum_{j\geq 0}\G_j$, $\G_j\in\QH_j^\t$, $\G_0=\FZERO$ and a scalar function $\mu$, $\mu(\zero)=r$ such that $[\F,\G]=\mu\F$, hence $\F$ has a Lie symmetry.
\end{theorem}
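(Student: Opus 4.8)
The plan is to deduce Theorem~\ref{main2} from Theorem~\ref{main}. Since, under the stated hypotheses, $\F$ is analytically integrable if and only if it is orbitally equivalent to $\F_r$, it suffices to prove that this orbital equivalence is equivalent to the existence of a Lie symmetry $\G=\FZERO+\sum_{j\ge1}\G_j$, $\G_j\in\QH_j^\t$, with $[\F,\G]=\mu\F$ and $\mu(\zero)=r$. The basic ingredient is Euler's identity for $\qh^\t$: for every $\F_j\in\QH_j^\t$ one has $[\F_j,\FZERO]=j\,\F_j$, so $\FZERO$ is itself a Lie symmetry of $\F_r$, namely $[\F_r,\FZERO]=r\F_r$.

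\textbf{Analytic integrability $\Rightarrow$ Lie symmetry.} Assume $\F$ is analytically integrable. By Theorem~\ref{main} there are an analytic diffeomorphism $\Phi$ fixing the origin and an analytic unit $k$ (i.e.\ $k(\zero)\neq0$) with $\Phi_*\F=k\,\F_r$. Put $\G:=\Phi^*\FZERO$. By the Leibniz rule for the bracket, $[k\F_r,\FZERO]=k[\F_r,\FZERO]+\FZERO(k)\,\F_r=(r+\FZERO(k)/k)(k\F_r)$; pulling this relation back by $\Phi$ gives $[\F,\G]=\mu\F$ with $\mu=(r+\FZERO(k)/k)\circ\Phi$, and $\mu(\zero)=r$ because $\FZERO(\zero)=\zero$. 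The field $\G$ is analytic and vanishes at the origin, and its leading quasi-homogeneous term $\G_0\in\QH_0^\t$ satisfies $[\F_r,\G_0]=r\F_r=[\F_r,\FZERO]$, i.e.\ $\G_0-\FZERO$ commutes with $\F_r$; using that the origin of $\dot{\x}=\F_r(\x)$ is isolated, one checks that in the quasi-homogeneous normalization adapted to the nilpotent case this forces $\G_0=\FZERO$ (after, if necessary, adjusting the linear part of $\Phi$ compatibly with the grading). This produces a Lie symmetry of the required form.

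\textbf{Lie symmetry $\Rightarrow$ analytic integrability.} Assume now $[\F,\G]=\mu\F$ with $\G=\FZERO+\sum_{j\ge1}\G_j$ and $\mu(\zero)=r$. Set $V:=\F\wedge\G$. Its leading quasi-homogeneous term is $\F_r\wedge\FZERO=-(r+|\t|)\,\hx_r$ by (\ref{componentes}), and $\hx_r\not\equiv0$: otherwise $\F_r=\mu_r\FZERO$ would vanish on the curve $\{\mu_r=0\}$ (recall $r\ge1$ for a nilpotent vector field, so $\mu_r$ is a nonconstant quasi-homogeneous polynomial), contradicting that the origin of $\dot{\x}=\F_r(\x)$ is isolated; hence $V\not\equiv0$. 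Moreover, the identity $\F\wedge[\F,\G]=(\diverg\F)(\F\wedge\G)-\F(\F\wedge\G)$, together with $[\F,\G]=\mu\F$ (which gives $\F\wedge[\F,\G]=\mu\,(\F\wedge\F)=0$), yields $\F(V)=(\diverg\F)\,V$: thus $V$ is an analytic inverse integrating factor of $\F$ whose principal part is a nonzero multiple of $\hx_r$. Since $\F_r$ is polynomially integrable and $V$ has this principal part, one shows that $\F$ is orbitally equivalent to $\F_r$, and then Theorem~\ref{main} gives that $\F$ is analytically integrable.

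\textbf{Main difficulty.} The hard step is the last one: obtaining the orbital equivalence of $\F$ to $\F_r$ from the analytic inverse integrating factor $V$ together with the polynomial integrability of $\F_r$. Expanding $[\F,\G]=\mu\F$ by quasi-homogeneous degree yields, for each $\ell\ge1$, a homological equation of the form $[\F_r,\G_\ell]=-\ell\,\F_{r+\ell}+\Psi_\ell$, where $\Psi_\ell$ depends only on $\F_{r+1},\dots,\F_{r+\ell-1}$, $\G_1,\dots,\G_{\ell-1}$ and the lower-degree parts of $\mu$; one must analyze the image and cokernel of the linear operators $[\F_r,\cdot]\colon\QH_\ell^\t\to\QH_{r+\ell}^\t$ and verify that the presence of $\G$ removes exactly the obstructions that would otherwise prevent one from killing $\F_{r+1},\F_{r+2},\dots$ by an orbital transformation. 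It is here that the hypothesis that $\F_r$ is polynomially integrable — through the structure of $\hx_r$ and $\mu_r$ — enters decisively; controlling these cokernels uniformly in $\ell$, so as to obtain a convergent transformation, is the technical core of the proof.
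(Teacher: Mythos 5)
Your overall strategy is the same as the paper's: reduce Theorem \ref{main2} to Theorem \ref{main} by showing that, for these fields, orbital equivalence to $\F_r$ is equivalent to the existence of a symmetry $\G=\FZERO+\sum_{j\geq 1}\G_j$ with $[\F,\G]=\mu\F$, $\mu(\zero)=r$. The paper does not prove this equivalence at all: its proof is a two-line remark invoking \cite[Theorem 1.3]{Algaba09_like}, which is precisely this ``like-linearization'' characterization. Your forward implication reconstructs one half of that cited result and is essentially sound, but it has a loose end: for a general analytic conjugacy $\Phi$ with $\Phi_*\F=k\F_r$, the field $\Phi^*\FZERO$ need not lie in $\FZERO+\bigoplus_{j\geq 1}\QH_j^\t$ (when $t_1\neq t_2$ its quasi-homogeneous expansion can contain terms of negative degree unless $\Phi$ is of the graded form $\Phi_0\circ(\id+\cdots)$), and the assertion that $[\F_r,\G_0-\FZERO]=0$ ``forces'' $\G_0=\FZERO$ is stated without verification. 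Both points are repairable using the normal-form machinery of Section 3, which produces conjugacies of the required graded form.

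The genuine gap is in the converse. From $[\F,\G]=\mu\F$ you correctly deduce that $V=\F\wedge\G$ is an analytic inverse integrating factor whose leading part is a nonzero multiple of $\hx$. But an inverse integrating factor vanishing at the origin does not by itself yield an analytic first integral: $\int (P\,dy-Q\,dx)/V$ is in general only a multivalued Darboux-type function, and for the leading parts in question, $\hx=-\fracp{1}{2}(y^2\mp x^{2n+2})$ with possibly $\mu_r\neq 0$, the resulting exponents need not be integers. The sentence ``one shows that $\F$ is orbitally equivalent to $\F_r$'' is therefore exactly the missing theorem, not a routine step; your closing paragraph writes down the homological equations one would need to solve but does not analyze the cokernels of $[\F_r,\cdot]$ nor explain how the existence of $\G$ removes the obstructions degree by degree. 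That analysis is precisely the content of \cite[Theorem 1.3]{Algaba09_like}; without reproducing it or citing it, the equivalence on which your whole argument rests is unproved, and the proof is incomplete.
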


These results apply to nilpotent singular points. More specifically, here we solve the nilpotent case, that is, with $h(0,y)=-y^2/2$, where $\hx$ is the hamiltonian function of its first quasi-homogeneous component. Similar results for nondegenerate and resonant saddle singular points were found in previous works, see \cite{AGG,HJ,Zoladek}.

\smallskip

Theorem \ref{main} is not true for degenerate singular points, that is, any analytic integrable
vector field with null linear part is not orbitally equivalent to its first component, see for instance \cite{ACG}.
Hence the remaining global open case is the case when we have a degenerate singular point. However some partial results are known. The analytic integrability problem when $\F_r=\X_{\hx}$, i.e., the first quasi-homogeneous component of $\F$ is conservative, with $h$ having only simple factors is completely solved in \cite{AlgabaNonlinearity09}. The case when $\F_r=\X_{\hx}$ with $h$ having multiple factors is still open.
In \cite{Algaba_Checa_Garcia_Gine2015} it is studied a particular case of the case when $\F_r=\X_{\hx}$, with $h$ having multiple factors showing the difficulty of this problem.

The case when $\F_r=\X_{\hx}+\mu \FZERO$, $\mu \ne 0$, with $h$ having simple or multiple factors is also open.

Throughout the paper we will not consider questions of convergence in the normal forms because the formal integrability is equivalent to the analytical integrability for vector fields analyzed, see \cite{Mattei80}.

\section{Takens-Bogdanov singularity}

In order to study the analytic integrability of the nilpotent differential systems we analyze the normal preform which sets the Newton diagram of the field and therefore the type and the degree of the quasi-homogeneous leading term of the vector field that we want to study.

\begin{proposition}[Normal preform]\label{propreFN} Consider $\dot{\x}=\F(\x)$ a nilpotent vector field with $\zero$ an isolated singular point. There exist $n\in\mathbb{N}$, a polynomial change $\Phi$ and a type $\t$ such that $\tilde{\F}:=\Phi_*\F=\tilde{\F}_r+\cdots$, $\tilde{\F}_r\in\QH_r^t$, with $\cdots$ the quasi-homogeneous terms of type $\t$ and degree greater than $r$, where $\tilde{\F}_r$ satisfies one of the following two conditions
\begin{description}
\item [A)] $\tilde{\F}_r=(y,bx^ny)^T\in\QH_n^\t$, $b\in\mathbb{R}$, $b\neq 0$, i.e., $\t=(1,n+1)$, and $r=n$.
\item[B)] $\zero$ is an isolated singular point of $\dot{\x}=\tilde{\F}_r(\x)$ and there exist $\Psi_0\in\QH_0^\t$, with $\det\parent{D\Psi_0(\zero)}\neq 0$ such that $\G_r=(\Psi_0)_*\tilde{\F}_r$ is one of the following vector fields.
\begin{description}
\item[B1)] $\G_r=\vedo{y}{x^{2n}}\in\QH_{2n-1}^{(2,2n+1)}$, i.e.,  $\t=(2,2n+1)$ and $r=2n-1$.
\item[B2)] $\G_r=\vedo{y}{-(n+1)x^{2n+1}}+dx^{n}\vedo{x}{(n+1)y}\in\QH_{n}^{(1,n+1)}$, $d\in\mathbb{R}$, i.e., $\t=(1,n+1)$ and  $r=n$.
\item[B3)] $\G_r=\vedo{y}{0}+dx^{n}\vedo{x}{(n+1)y}\in\QH_{n}^{(1,n+1)}$, $d\in\mathbb{R}$, $d\neq 0$  i.e., $\t=(1,n+1)$ and $r=n$.
\item[B4)] $\G_r=\vedo{y}{(n+1)x^{2n+1}}+dx^{n}\vedo{x}{(n+1)y}\in\QH_{n}^{(1,n+1)}$, $d\in\mathbb{R}$, $d\neq 1$, i.e., $\t=(1,n+1)$ and $r=n$.
\end{description}
\end{description}
\end{proposition}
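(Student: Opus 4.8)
The plan is to prove Proposition~\ref{propreFN} by a normal-form reduction that successively fixes the Newton diagram of the nilpotent field. First I would start from the nilpotent linear part, which after a linear change we may take to be $\F(\x)=(y,0)^T+\cdots$. The classical Takens normal form allows us to bring $\F$ to the form $(y+a(x),b(x))^T+\cdots$ with $a,b$ formal series in $x$, or equivalently, keeping track of orders, to $\dot x = y$, $\dot y = f(x)+y\,g(x)+\cdots$. Since $\zero$ is an isolated singular point of $\F$, the pair $(f,g)$ is not identically zero; let $2n$ (resp. $n$) be the order of $f$ (resp. of $g$). The Newton diagram of the planar vector field is then determined by the pair $(\mathrm{ord}\,f,\mathrm{ord}\,g)$, and the two cases A) and B) correspond exactly to which monomial sits on the lower-left vertex: case A) is the degenerate situation $\mathrm{ord}\,g<\mathrm{ord}\,f-1$ wait---more precisely case A) is when the term $y\,g(x)$ dominates and $f$ is of strictly higher weighted order, so that the only quasi-homogeneous leading part is $(y,bx^ny)^T$; case B) is when $f$ (alone, or together with the diagonal term coming from $g$) contributes to the leading quasi-homogeneous part.

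Second, inside case B) I would choose the type $\t$ so that $\tilde\F_r$ is the quasi-homogeneous leading term. The natural candidates are $\t=(1,n+1)$ when the dominating balance is between $\dot y\sim x^{2n+1}$ and $\dot y\sim x^n y$ (both of weighted degree $n$ in the vector-field grading), and $\t=(2,2n+1)$ when $\dot y\sim x^{2n}$ alone dominates, i.e., the $x^{n}y$-term is absent or of higher order. In the first subcase, writing $\F_r=(y,\, c\,x^{2n+1})^T+d\,x^n(x,(n+1)y)^T=\X_{\hx_r}+\mu_r\FZERO$ via decomposition~(\ref{componentes}), the coefficient $c$ can be normalized to one of $0,\pm(n+1)$ by a scaling $\Psi_0\in\QH_0^\t$ of the form $(x,y)\mapsto(\alpha x,\alpha^{n+1}y)$: if $c>0$ we land in B4) after fixing the normalization $c=(n+1)$, with the residual constant being the ratio $d$ that a scaling cannot remove, and the requirement that $\zero$ be isolated for $\dot\x=\tilde\F_r(\x)$ forces $d\neq1$ (otherwise $y^2/2-x^{2n+2}/2$ is a common factor and the origin is non-isolated, or more precisely $\F_r$ vanishes on a curve); similarly $c<0$ gives B2) with no constraint on $d$; $c=0$ gives B3), where now isolatedness forces $d\neq0$. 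The subcase $\t=(2,2n+1)$ yields B1) with $\G_r=(y,x^{2n})^T$, and here $x^{2n}$ can always be scaled to coefficient $1$.

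Third, I would argue that these are the only possibilities and that case A) is genuinely distinct (it cannot be massaged into B) by any polynomial change, which is why the remark in the introduction flags it as the one open case): in A) the leading term $(y,bx^ny)^T$ has a non-isolated zero set for $\dot\x=\tilde\F_r$ (the line $y=0$ is invariant and the whole $x$-axis up to the vertex consists of equilibria, or rather $\tilde\F_r$ vanishes only at $\zero$ but the field is not of the semisimple-friendly type), so the hypothesis of Theorems~\ref{main} and~\ref{main2} excludes it. The bookkeeping that I expect to be the main obstacle is keeping the Newton polygon argument honest: one must check that after the Takens reduction no lower-order term can be reintroduced, that the choice of $\t$ really does isolate a single quasi-homogeneous component $\tilde\F_r$ (equivalently, that the Newton diagram has a single compact edge or vertex of the relevant slope), and that the normalizing change $\Psi_0$ of weighted degree zero is an honest polynomial diffeomorphism preserving the grading. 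The delicate point is the case analysis on the coefficient $c$ together with the isolatedness constraints ($d\neq1$ in B4), $d\neq0$ in B3)); this is where the proposition's genericity hypothesis is consumed, and it is the step that requires the most care to state correctly rather than the most computation.
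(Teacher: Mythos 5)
Your outline follows essentially the same strategy as the paper --- read off the leading quasi-homogeneous part from the Newton diagram determined by the orders of the two scalar data $f$ and $g$, then normalize it by a quasi-homogeneous scaling $\Psi_0$, with the constraints $d\neq 0$ in B3) and $d\neq 1$ in B4) extracted from isolatedness of the leading part --- but your front end is genuinely different and is where the sketch needs repair. The paper does \emph{not} pass through the Takens normal form: it keeps the raw preform $\dot x=y+xf_1(x)+yf(x,y)$, $\dot y=g_1(x)+yg_2(x)+y^2g(x,y)$, sets $M=\mathrm{ord}(g_1)$ and $N=\min(\mathrm{ord}(f_1),\mathrm{ord}(g_2))$, and obtains the candidate leading part $\F_r=(y+ax^{n+1},\,bx^ny+cx^{2n+1})^T$ for the type $(1,n+1)$. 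The origin of $\dot\x=\F_r(\x)$ is isolated iff $c\neq ab$; when $c=ab$ the single polynomial shear $(x,y)\mapsto(x,y+ax^{n+1})$ either lands in case A) (if $b+(n+1)a\neq0$) or produces a new nilpotent preform with larger $N,M$, and the construction is \emph{iterated}. Your appeal to ``the classical Takens normal form'' to reach $\dot x=y$, $\dot y=f(x)+yg(x)+\cdots$ hides exactly this iteration inside a formal (infinite) change of coordinates, whereas the proposition promises a \emph{polynomial} $\Phi$; to make your route legitimate you must either truncate the Takens reduction at a finite order and check that the tail cannot touch the leading quasi-homogeneous part, or adopt the paper's targeted shear. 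Relatedly, even once $\dot x=y$ is achieved, the Hamiltonian part of the leading term still carries a cross term proportional to $x^{n+1}y$ whenever $\mathrm{ord}(g_2)=n$; the split into B2)/B3)/B4) is governed by the sign (or vanishing) of the quantity $c+\frac{(b-(n+1)a)^2}{4(n+1)}$ obtained after completing the square in $h$ --- this is precisely what the paper's $\Psi_0$ does --- and not by the sign of the raw coefficient of $x^{2n+1}$ in $\dot y$, as your second paragraph suggests.

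Two smaller points. First, ``let $2n$ (resp.\ $n$) be the order of $f$ (resp.\ of $g$)'' is not general; the operative trichotomy is $M=2n<2N+1$ (type $(2,2n+1)$, case B1)) versus $M=2n+1\le 2N+1$ or $2N+1<M$ (type $(1,n+1)$), and it is the parity of $M$ and its position relative to $2N+1$ that selects the type. Second, of your two hedged alternatives about case A), the first is the correct one: $(y,bx^ny)^T$ vanishes on the whole line $y=0$, so the origin is \emph{not} isolated for $\tilde\F_r$ there --- this is exactly why A) falls outside hypothesis B). Also, in B4) isolatedness of the leading part in fact excludes $d=\pm1$ (for $d=-1$ one has $\G_r=(y-x^{n+1})\,(1,-(n+1)x^n)^T$, which vanishes on a curve), even though the displayed condition only records $d\neq1$; the extra exclusion is carried by the isolatedness clause in the preamble of B).
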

\begin{proof}
 A nilpotent singularity can be written as
\begin{equation}\label{que}
\begin{array}{lll}
\dot{x} &= & y + x f_1(x) + y f(x,y),\\
\dot{y} &= & g_1(x) + y g_2(x) + y^2 g(x,y).
\end{array}
\end{equation}
Let us denote { by $M$ the lowest-degree in the Taylor} expansion of
$g_1(x)$; and  $N$ is the minimum of the {lowest}-degrees of the
Taylor expansions for $f_1(x)$ and $g_2(x)$. Hence, $M=\infty$
arises if $g_1(x)\equiv 0$ and $N=\infty$ corresponds to
$f_1(x)\equiv g_2(x)\equiv 0$. Then, we can write nilpotent system (\ref{que}) as
 \bean
\dot{x} & = & y + x^{N+1} \tilde{f}_1(x) + y f(x,y),\\
\dot{y} & = & x^{N} y \tilde{g}_1(x) + x^{M} \tilde{g}_2(x)+ y^2 g(x,y),
 \eean
where $M\in\N\cup\llave{\infty}$, $M>1$  $N\in\N\cup\llave{\infty}$,
and $\tilde{f}_1(x)=\ano+\calo\parent{x}$, $\tilde{g}_1(x) =
\bno+\calo\parent{x}$, $\tilde{g}_2(x) = \bmo+\calo\parent{x}$,  $f(x,y)= \calo\parent{x,y}$,
$g(x,y)  = \calo\parent{x,y}$.
\begin{itemize}
\item If $M=N=\infty$, the line $y=0$ is a curve of singular points, the origin is not isolated and we must exclude this case.
\item If $M=2n<2N+1$ then $\Phi=Id$ and the first component of the nilpotent vector field respect to the type $\t=(2,2n+1)$ is $\tilde{\F}_r=(y,cx^{2n})^T$, $c\neq 0$ of degree $r=2n-1$. $\zero$ is an isolated singular point of $\dot{\x}=\tilde{\F}_r(\x)$ and taking $\Psi_0(x,y)=(c^{\fracp{1}{2n+1}}x,c^{\fracp{1}{2n+1}}y)^T$, we get $\G_r=(y,x^{2n})^T$. This corresponds to the case  {\bf B1)}.
\item If $M=2n+1\leq 2N+1$ or $2N+1< M$, we divide the study in the following cases:
\begin{itemize}
\item If $M=2n+1<2N+1$ then the first component of the nilpotent vector field respect to the type $\t=(1,n+1)$ is $\F_r=(y,cx^{2n+1})^T$, of degree $r=n$, with $c\neq 0$.
\item If $M=2n+1=2N+1<+\infty$ then the first component of the nilpotent vector field respect to the type $\t=(1,n+1)$ is $\F_r=(y+ax^{n+1},bx^ny+cx^{2n+1})^T$, of degree $r=n$, with $c(a^2+b^2)\neq 0$.
\item If $2N+1<M$ then the first component of the nilpotent vector field respect to the type $\t=(1,n+1)$ is $\F_r=(y+ax^{n+1},bx^ny)^T$, of degree $r=n$, with $(a^2+b^2)\neq 0$.
\end{itemize}
We can deal with these three cases at once by writing
$$
 \F_{r}(x,y)=\vedo{y+ax^{n+1}}{bx^{n}y+ c x^{2n+1}},
 $$
where $c=0$, $a^2+b^2\neq 0$ if $2N+1<M$, $N=n$, $c(a^2+b^2)\neq 0$ if $M=2n+1=2N+1$.

$\zero$ is an isolated singular point of $\dot{\x}=\F_r(\x)$ if, and only if, $c\neq ab$.
\begin{itemize}
\item If $c=ab$ taking $\Phi(x,y)=(x,y+ax^{n+1})$ we get $\tilde{\F}_n=(y,(b+(n+1)a)x^ny)^T$.
\begin{itemize}
\item If $2N+1<M$, $c=0$ and $a^2+b^2\neq 0$. Therefore $b+(n+1)a\neq 0$. This correspond to the case {\bf A)}.
\item If $M=2n+1=2N+1$ and $b+(n+1)a\neq 0$ this correspond to the case  {\bf A)}.
\item If $M=2n+1=2N+1$ and $b+(n+1)a=0$ we have a nilpotent system with new values of $N$ and $M$.
\end{itemize}
\item If $c\neq ab$, $\zero$ is an isolated singular point of $\dot{\x}=\F_r(\x)$. It is a simple task to perform the splitting (\ref{componentes}) in
this case. We obtain $r=n$, $r+|\t|=2(n+1)$, and
 \bean
\hx(x,y)&=&\fracp{c}{2(n+1)} x^{2(n+1)} +
\left(\fracp{{\bno}}{2(n+1)}-\fracp{1}{2}{\ano}\right)x^{n+1}y
-\fracp{1}{2}y^2\\
&=&-\fracp{1}{2}\parent{y-\parent{\fracp{{\bno}}{2(n+1)}
-\fracp{{\ano}}{2}}x^{n+1}}^2 + \frac{\Big(
c + \fracp{\parent{\bno-\ano(n+1)}^2}{4(n+1)}
 \Big)}{2(n+1)} x^{2(n+1)},
\\
\mu(x,y)&=&\left(\fracp{{\bno}}{2(n+1)}+\fracp{1}{2}{\ano}\right)x^{n}.
 \eean
\begin{itemize}
\item If $c + \fracp{\parent{\bno-\ano(n+1)}^2}{4(n+1)}=0$, taking $\Psi_0(x,y)=\left(x,y-\parent{\fracp{{\bno}}{2(n+1)}-
\fracp{{\ano}}{2}}x^{n+1}\right)^T$ we get $\parent{\Psi_0}_*\F_r=(y+dx^{n+1},(n+1)dx^ny)^T$ with $d=\fracp{b+(n+1)a}{2(n+1)}\neq 0$, otherwise $c=(n+1)a^2=ab$ and this is contradictory. This correspond to the case {\bf B3)}.
\item If $(n+1)A:=c + \fracp{\parent{\bno-\ano(n+1)}^2}{4(n+1)}\neq 0$, taking $$\Psi_0(x,y)=\left(|A|^{\frac{1}{2n}}x,|A|^{\frac{1}{2n}}\left(y-\parent{\fracp{{\bno}}{2(n+1)}-
\fracp{{\ano}}{2}}\right)x^{n+1}\right)^T,$$ we get $\parent{\Psi_0}_*\F_r=(y+dx^{n+1},\sigma x^{2n+1}+(n+1)dx^ny)^T$ with $\sigma=\sig(A)=\pm 1$ and $d=\fracp{b+(n+1)a}{2(n+1)\sqrt{|A|}}$.
\begin{itemize}
\item $\sigma=-1$ is the case {\bf B2)}.
\item If $\sigma=1$ then $d\neq 1$. Otherwise $c=ab$ and this is contradictory. This correspond to the case {\bf B4)}.
\end{itemize}
\end{itemize}
\end{itemize}
\end{itemize}
\end{proof}

\begin{remark}
The case {\bf A)} will not be treated in this work. In this case its first quasi-homogeneous component is reducible. It is an open issue that will be studied in a future work.
\end{remark}

\subsection{Invariant curves of nilpotent vector fields}

We need the three following results in order to calculate the formal invariant curves of a nilpotent vector field whose first quasi-homogeneous component is non-conservative.

\smallskip

First we give the definition of invariant curve and its associated cofactor.

\begin{definition} Let $f\in\mathbb{C}[[x,y]]$
( power series with coefficients in $\mathbb{C}$). It is said that
$f$, $f(\zero)=0$, is an invariant curve of $\F$ (or
$\dot{\x}=\F(\x)$) if there exists $K\in\mathbb{C}[[x,y]]$
(cofactor) such that $\nabla f(\x)\cdot\F(\x)=K(\x)f(\x)$, for all
$\x\in\mathbb{C}^2$.
\end{definition}

\begin{lemma}\label{lemm}
Consider $\F_n =   \X_{\hx} + \mu \FZERO \in\QH_n^\t$ where $\hx\in\mathbb{C}[x,y]$ (polynomials with coefficients in $\mathbb{C}$). Any factor of $h$ is an invariant curve of $\F_n$.
Moreover any quasi-homogeneous irreducible invariant curve of $\F_n$ is a factor of $h$.
\end{lemma}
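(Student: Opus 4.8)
The plan is to prove the two halves separately; both rest on Euler's identity for a quasi-homogeneous polynomial $f\in\qh_\ell^\t$, namely $\nabla f\cdot\FZERO=\ell f$, together with the Hamiltonian relation $\nabla h\cdot\X_h=0$. Throughout I write $f_x:=\partial f/\partial x$ and $f_y:=\partial f/\partial y$, so that $\nabla f\cdot\X_h=f_yh_x-f_xh_y$.

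\emph{Factors of $h$ are invariant curves.} First note that every irreducible factor of a quasi-homogeneous polynomial is again quasi-homogeneous: in a factorization $h=pq$, comparing the lowest- and the highest-degree quasi-homogeneous parts with respect to the grading of $\mathbb{C}[x,y]$ by type $\t$ forces both $p$ and $q$ to be quasi-homogeneous, since $h$ is quasi-homogeneous. So let $f\in\qh_\ell^\t$ be an irreducible factor of $h$ and write $h=f^m g$ with $m\geq 1$ and $f\nmid g$. The product rule gives $\nabla f\cdot\X_h=f_yh_x-f_xh_y=f^m(f_yg_x-f_xg_y)$, so $f\mid\nabla f\cdot\X_h$; combining with $\nabla f\cdot(\mu\FZERO)=\mu(\nabla f\cdot\FZERO)=\ell\mu f$ we obtain $\nabla f\cdot\F_n=f\big(f^{m-1}(f_yg_x-f_xg_y)+\ell\mu\big)$, hence $f$ is an invariant curve, with cofactor $f^{m-1}(f_yg_x-f_xg_y)+\ell\mu\in\qh_n^\t$. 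Finally, if $f_1,f_2$ are invariant curves with cofactors $K_1,K_2$ then $f_1f_2$ is one with cofactor $K_1+K_2$, and $f_1^k$ is one with cofactor $kK_1$; applying this to the irreducible factors of $h$ shows that every factor of $h$ is an invariant curve.

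\emph{Quasi-homogeneous irreducible invariant curves are factors of $h$.} Let $f\in\qh_\ell^\t$ be irreducible with $\nabla f\cdot\F_n=Kf$; comparing quasi-homogeneous degrees gives $K\in\qh_n^\t$. Subtracting $\nabla f\cdot(\mu\FZERO)=\ell\mu f$ yields $f\mid\nabla f\cdot\X_h=f_yh_x-f_xh_y$. I would then pass to the quotient ring $A:=\mathbb{C}[x,y]/(f)$, which is an integral domain since $f$ is irreducible (a quasi-homogeneous polynomial irreducible in $\mathbb{C}[x,y]$ is also irreducible in $\mathbb{C}[[x,y]]$, by comparison of lowest-degree forms, so this is harmless even if one only knows $K\in\mathbb{C}[[x,y]]$). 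Writing a bar for reduction modulo $f$, the divisibility reads $\overline{f_y}\,\overline{h_x}=\overline{f_x}\,\overline{h_y}$ in $A$, Euler's identity for $f$ becomes $t_1\bar x\,\overline{f_x}+t_2\bar y\,\overline{f_y}=0$, and for $h$ it becomes $t_1\bar x\,\overline{h_x}+t_2\bar y\,\overline{h_y}=(n+|\t|)\bar h$. Multiplying the last identity by $\overline{f_y}$ and replacing $\overline{f_y}\,\overline{h_x}$ by $\overline{f_x}\,\overline{h_y}$ collapses its left-hand side to $\overline{h_y}\big(t_1\bar x\,\overline{f_x}+t_2\bar y\,\overline{f_y}\big)=0$, so $(n+|\t|)\bar h\,\overline{f_y}=0$ in $A$. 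Since $n+|\t|\neq 0$ and $f\nmid f_y$ for degree reasons — the only exception being $f$ proportional to $x$ — the domain property of $A$ forces $\bar h=0$, i.e.\ $f\mid h$. The exceptional cases $f=cx$ and, symmetrically, $f=cy$ are settled directly: if $x$ is an invariant curve then $x\mid\nabla x\cdot\F_n=-h_y+t_1\mu x$, hence $x\mid h_y$, and Euler's identity for $h$ reduced modulo $x$ gives $t_2 y\,\overline{h_y}=(n+|\t|)\bar h$, so again $\bar h=0$.

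I expect the second half to be the crux: the point that is not immediately obvious is that quasi-homogeneity has to be used twice, through Euler's identity applied to both $f$ and $h$, and it is genuinely indispensable — for example $\X_x$ leaves every line $x=\mathrm{const}$ invariant, although $x$ has only $x$ itself as an irreducible factor. The factorization fact for quasi-homogeneous polynomials and the forward direction are routine computations.
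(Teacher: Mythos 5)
Your proof is correct, and its skeleton coincides with the paper's: the forward direction is the same computation (product rule for the Hamiltonian part, Euler's identity $\nabla f\cdot\FZERO=sf$ for the dissipative part — the paper phrases it via $\X_{fg}=f\X_g+g\X_f$ and $\nabla f\cdot\X_f=0$ instead of in coordinates, and does not track the multiplicity $m$, but the content is identical), and for the converse both arguments first subtract the Euler term $s\mu f$ to reduce to $f\mid\nabla f\cdot\X_{\hx}$. The real difference is what happens next: the paper simply asserts that an irreducible invariant curve of the Hamiltonian field $\X_{\hx}$ must divide $\hx$, while you actually prove this, passing to the domain $\mathbb{C}[x,y]/(f)$ and playing the Euler identities of $f$ and of $h$ against each other, with the degenerate cases $f=cx$, $f=cy$ handled separately. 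Your version is more self-contained and purely algebraic — it replaces the Nullstellensatz/connectedness argument one would otherwise need to justify the paper's final sentence, and it makes explicit that quasi-homogeneity of both $f$ and $h$ is what drives the conclusion — at the modest cost of a case split; it is a legitimate and arguably more informative route to the same lemma.
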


\begin{proof} Let $f\in\qh_s^\t$ an irreducible factor of $\hx$ then $\hx=fg$ and $\nabla f\cdot\F_n=\nabla f\cdot X_{fg}+\mu\nabla f\cdot\FZERO=f\nabla f\cdot X_g+s\mu f=\parent{\nabla f\cdot X_g +s\mu}f$. Therefore, $f$ is an invariant curve of $\F_n$.

If $f\in\qh_s^\t$ is an irreducible invariant curve of $\F_n$ with cofactor $K$ then $k f=\nabla f\cdot\F_n=\nabla f\cdot X_{\hx}+\mu\nabla f\cdot\FZERO=\nabla f\cdot X_{\hx}+ s\mu f$. Therefore, $\nabla f\cdot X_{\hx}=(K-s\mu)f$ and $f$ is an irreducible invariant curve of $\X_{\hx}$. So, $f$ divides to $\hx$.
\end{proof}

\begin{lemma}\label{leqrangprang} Consider $\F_n=(y,(n+1)x^{2n+1})^T+dx^n\FZERO\in\QH_n^\t$ with $n\in\mathbb{N}$, $\t=(1,n+1)$, $\FZERO(x,y)=(x,(n+1)y)^T\in\QH_0^\t$.
Let $f\in\qh_{s}^\t$ be an irreducible invariant curve of $\F_n$ with cofactor $K_n \in \qh_n^\t$. Consider the linear operators:
\bean
\begin{array}{rcl}\delta_{n+k}:\qh_{n+k-s}^\t&\longrightarrow&\qh_{n+k}^\t\\ p&&pf\end{array}&&\begin{array}{rcl}\tilde{\Opl}_{n+k}:\qh_{k}^\t&\longrightarrow&\qh_{n+k}^\t\\ p&&\nabla p\cdot\parent{\F_n-\fracp{1}{k}K_n\FZERO}.\end{array}
\eean
If $|d|\neq 1+\fracp{2(n+1)}{k-n-1}$ then $\tilde{\Opl}_{n+k}\parent{\Cor\parent{\delta_{k}}}\cap\Range\parent{\delta_{n+k}}=\llave{0}$,
where $\Cor\parent{\delta_k}$ is a complementary subspace to $\Range\parent{\delta_k}$.
\end{lemma}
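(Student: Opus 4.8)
The plan is to reduce the statement to a one-variable computation on the explicit curve $\{f=0\}$. First I would identify $f$: carrying out the splitting (\ref{componentes}) for $\F_n$ (with $r=n$ and $|\t|=n+2$) gives $\mu=dx^n$ and $\hx=\frac{1}{2}(x^{2(n+1)}-y^2)=\frac{1}{2}(x^{n+1}-y)(x^{n+1}+y)$, so by Lemma \ref{lemm} every quasi-homogeneous irreducible invariant curve of $\F_n$ is, up to a nonzero constant, $f=x^{n+1}-\sigma y$ with $\sigma\in\{1,-1\}$. In particular $s=n+1$, and a direct substitution gives the cofactor $K_n=(n+1)(d-\sigma)x^n$.

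Next I would simplify the modified field. Since $K_n$ is a multiple of $x^n$, the field $\F_n-\frac{1}{k}K_n\FZERO$ is again of the form $\X_{\hx}+\tilde{d}\,x^n\FZERO$, where $\tilde{d}:=d-\frac{(n+1)(d-\sigma)}{k}$, and it still has $f$ as an invariant curve (because $\nabla f\cdot\FZERO=(n+1)f$ by Euler's identity). Parametrizing $\{f=0\}$ by $\gamma(x)=(x,\sigma x^{n+1})$ and using $\sigma^2=1$, a short computation gives $(\X_{\hx}+\tilde{d}\,x^n\FZERO)(\gamma(x))=(\sigma+\tilde{d})\,x^{n+1}\,\gamma'(x)$, hence by the chain rule, for every $p\in\qh_k^\t$,
\[
(\tilde{\Opl}_{n+k}\,p)(\gamma(x))=(\sigma+\tilde{d})\,x^{n+1}\,\frac{d}{dx}\left[p(\gamma(x))\right].
\]
Because $p$ is quasi-homogeneous of type $\t=(1,n+1)$ and degree $k$, each monomial $x^a y^b$ of $p$ (with $a+(n+1)b=k$) restricts along $\gamma$ to $\sigma^b x^k$; so $p(\gamma(x))$ is a scalar multiple of $x^k$, and since $f$ is irreducible with $\gamma(\mathbb{C})=\{f=0\}$, that scalar vanishes exactly when $f\mid p$, i.e. $p\in\Range(\delta_k)$. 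Likewise, using that the quotient of two quasi-homogeneous polynomials is quasi-homogeneous, $g\in\qh_{n+k}^\t$ belongs to $\Range(\delta_{n+k})$ iff $g(\gamma(x))\equiv0$. Combining these, $(\tilde{\Opl}_{n+k}\,p)(\gamma(x))$ is a scalar multiple of $x^{n+k}$ that vanishes precisely when $\sigma+\tilde{d}=0$ or $p\in\Range(\delta_k)$; equivalently, $\tilde{\Opl}_{n+k}\,p\in\Range(\delta_{n+k})$ iff $\sigma+\tilde{d}=0$ or $p\in\Range(\delta_k)$.

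It remains to rule out $\sigma+\tilde{d}=0$. Substituting the definition of $\tilde{d}$, a short computation shows this is equivalent to $d=-\sigma(1+\frac{2(n+1)}{k-n-1})$, i.e. (in the relevant range $k>n+1$) to $|d|=1+\frac{2(n+1)}{k-n-1}$, which is excluded by hypothesis. Hence $\sigma+\tilde{d}\neq0$, so $\tilde{\Opl}_{n+k}\,p\in\Range(\delta_{n+k})$ iff $p\in\Range(\delta_k)$. Now take $w\in\tilde{\Opl}_{n+k}(\Cor(\delta_k))\cap\Range(\delta_{n+k})$ and write $w=\tilde{\Opl}_{n+k}\,p$ with $p\in\Cor(\delta_k)$; then $p\in\Range(\delta_k)\cap\Cor(\delta_k)=\{0\}$, so $w=0$, proving $\tilde{\Opl}_{n+k}(\Cor(\delta_k))\cap\Range(\delta_{n+k})=\{0\}$. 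The one genuinely delicate step is converting ``belongs to $\Range(\delta_\bullet)$'' into ``vanishes on $\gamma(\mathbb{C})$'': this uses that $f$ is irreducible (so $(f)$ is radical and the Nullstellensatz applies) together with the quasi-homogeneity of quotients; the rest --- pinning down $\hx$, $K_n$ and the scalar $\sigma+\tilde{d}$ --- is routine bookkeeping.
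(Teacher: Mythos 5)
Your proof is correct, and it arrives at exactly the same numerical obstruction as the paper --- the vanishing of $(k-n-1)d+(k+n+1)\sigma$, i.e.\ $d=-\sigma\left(1+\fracp{2(n+1)}{k-n-1}\right)$ --- but by a differently organized argument. The paper fixes the explicit complement $\Cor\parent{\delta_k}=\langle x^k\rangle$, computes $\nabla (a_0x^k)\cdot\G_n$ directly, and reads off that the result is a multiple of $f$ only when $a_0=0$. You instead restrict everything to the invariant curve via $\gamma(x)=(x,\sigma x^{n+1})$, note that $\G_n\circ\gamma=(\sigma+\tilde d)\,x^{n+1}\gamma'$ so that $\tilde{\Opl}_{n+k}$ acts on restrictions as $(\sigma+\tilde d)\,x^{n+1}\frac{d}{dx}$, and identify $\Range\parent{\delta_j}$ with the kernel of restriction to the curve (Nullstellensatz for the irreducible $f$ plus quasi-homogeneity of quotients; alternatively, division by $f$, which is monic in $y$, gives this directly). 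What this buys you is the stronger, complement-independent statement that $\tilde{\Opl}_{n+k}(p)\in\Range\parent{\delta_{n+k}}$ iff $p\in\Range\parent{\delta_k}$ whenever $\sigma+\tilde d\neq 0$, from which the lemma follows for \emph{any} choice of $\Cor\parent{\delta_k}$, whereas the paper's computation as written treats only the particular complement $\langle x^k\rangle$. One caveat, which you flag and which the paper's own proof shares: translating the obstruction into the stated hypothesis $|d|\neq 1+\fracp{2(n+1)}{k-n-1}$ is only valid for $k>n+1$ (for $k\le n$ the right-hand side is negative and the hypothesis is vacuous, while for $k=n+1$ it is undefined but $\sigma+\tilde d=2\sigma\neq 0$ anyway); this is harmless since the lemma is only invoked with $k>n+1$ in Theorem \ref{teocuinvnilp}.
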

\begin{proof}
Consider $k=(n+1)k_1+k_2$ with $0\leq k_2<n+1$. By Lemma \ref{lemm} the unique irreducible invariant curves of $\F_n$ are the factors of $\hx=-(y^2 - x^{2n+2})/2$, given by $f=y-x^{n+1}$ and $f=y+x^{n+1}$ whose cofactors are $K_n=(n+1)(d-1)x^n$ and $K_n=(n+1)(d+1)x^n$, respectively.

Any polynomial $p\in\qh_k^\t$ can be expressed as $p(x,y)=\sum_{j=0}^{k_1} a_jx^{k-(n+1)j}f^j$, hence we can assume that $\Cor\parent{\delta_{n+k}}=<x^{k}>$. We consider $p=a_0x^{k}\in\Cor\parent{\delta_k}$. If we denote $\G_n=\parent{\F_n-\fracp{K_n}{k}\FZERO}$ we have to prove that if $\nabla p\cdot\G_n\in\Range\parent{\delta_{n+k}}$ then $p=0$.

We prove the case $f=y-x^{n+1}$, the case $f=y+x^{n+1}$ is analogous.
\bean \nabla p\cdot\G_n&=&a_0kx^{k-1}\nabla x\cdot\G_n=a_0kx^{k-1}\parent{y+\fracp{(k-n-1)d+n+1}{k}x^{n+1}}\\
&=&a_0k x^{k-1} \left ( y-x^{n+1}+\fracp{(k-n-1)d+k+n+1}{k}x^{n+1} \right). \eean
If $d\neq -\fracp{k+n+1}{k-n-1}=-1-\fracp{2(n+1)}{k-n-1}$, $\nabla x\cdot\G_n$ is not a multiple of $y-x^{n+1}$. 	
In order that $\nabla p\cdot\G_n$ be a multiple of $y-x^{n+1}$ we must take $a_0=0$ and therefore $p=0$.
\end{proof}

\begin{lemma}\label{leqhsumadirecta} Consider $k,s\in\mathbb{N}$, $k>s$, $s\geq t_1t_2$, $\F_n\in\QH_n^\t$. Let $f\in\qh_{s}^\t$ be an invariant curve of $\F_n$ with cofactor $K_n$, $f$ different from $x=0$ and $y=0$. Consider the linear operators defined
in Lemma \ref{leqrangprang}.
If $\Cor\parent{\delta_{k}}$ is a complementary subspace of $\Range\parent{\delta_{k}}$ and  $\tilde{\Opl}_{n+k}\parent{\Cor\parent{\delta_{k}}}\cap\Range\parent{\delta_{n+k}}$ $=\llave{0}$, then given $q\in\qh_{n+k}^\t$ there exist a unique $p^{(1)}\in\Cor\parent{\delta_{k}}$ and a unique $p^{(2)}\in\qh_{n+k-s}^\t$ such that
$$q=\tilde{\Opl}_{n+k}\parent{p^{(1)}}+p^{(2)}f.$$
\end{lemma}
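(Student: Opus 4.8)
The plan is to prove that the linear map
$$\Theta:\Cor\parent{\delta_{k}}\times\qh_{n+k-s}^{\t}\longrightarrow\qh_{n+k}^{\t},\qquad\Theta\parent{p^{(1)},p^{(2)}}=\tilde{\Opl}_{n+k}\parent{p^{(1)}}+p^{(2)}f,$$
is bijective; the asserted existence and uniqueness of $p^{(1)},p^{(2)}$ is precisely its surjectivity and its injectivity. The first step is to record the compatibility of the two operators: for $g\in\qh_{k-s}^{\t}$ Euler's identity gives $\nabla g\cdot\FZERO=(k-s)g$, and since $f$ has degree $s$ and cofactor $K_n$ one has $\nabla f\cdot\parent{\F_n-\fracp{1}{k}K_n\FZERO}=\fracp{k-s}{k}K_nf$; expanding $\tilde{\Opl}_{n+k}(gf)=\nabla(gf)\cdot\parent{\F_n-\fracp{1}{k}K_n\FZERO}$ with the Leibniz rule and these identities, the terms containing $K_n$ cancel and one is left with $\tilde{\Opl}_{n+k}(gf)=f\,\parent{\nabla g\cdot\F_n}\in\Range\parent{\delta_{n+k}}$. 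Hence $\tilde{\Opl}_{n+k}\parent{\Range\parent{\delta_k}}\subseteq\Range\parent{\delta_{n+k}}$, so $\tilde{\Opl}_{n+k}$ descends to a linear map $\bar T:\qh_k^{\t}/\Range\parent{\delta_k}\to\qh_{n+k}^{\t}/\Range\parent{\delta_{n+k}}$; and since $\delta_{n+k}$ is injective (multiplication by $f\neq0$), surjectivity of $\Theta$ is equivalent to surjectivity of $\bar T$, and injectivity of $\Theta$ is equivalent to injectivity of $\bar T$.

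The second step is a dimension count. As multiplication by $f$ is injective on $\mathbb{C}[x,y]$, $\dim\parent{\qh_m^{\t}/\Range\parent{\delta_m}}=\dim\qh_m^{\t}-\dim\qh_{m-s}^{\t}$ equals the Hilbert function of $\mathbb{C}[x,y]/(f)$ at degree $m$, which depends only on $s$ and $\t$. Because $f$ is divisible by neither $x$ nor $y$, both $t_1$ and $t_2$ divide $s$, and then — this is where $s\geq t_1t_2$ is used — a short count of the lattice points $(a,b)\in\Nzero^2$ with $t_1a+t_2b=m$ shows this Hilbert function is constant for every $m\geq s$. Since $k>s$ and $n+k>s$, we obtain $\dim\parent{\qh_k^{\t}/\Range\parent{\delta_k}}=\dim\parent{\qh_{n+k}^{\t}/\Range\parent{\delta_{n+k}}}$, so $\bar T$ is a linear map between finite-dimensional spaces of equal dimension and is therefore bijective as soon as it is injective.

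It thus remains to prove that $\bar T$ is injective, i.e. that $p\in\Cor\parent{\delta_k}$ with $\tilde{\Opl}_{n+k}(p)\in\Range\parent{\delta_{n+k}}$ forces $p=0$. By the hypothesis $\tilde{\Opl}_{n+k}\parent{\Cor\parent{\delta_k}}\cap\Range\parent{\delta_{n+k}}=\llave{0}$ such a $p$ satisfies $\tilde{\Opl}_{n+k}(p)=0$, i.e. (using $\nabla p\cdot\FZERO=kp$) $\nabla p\cdot\F_n=K_np$, so $p$ is an invariant curve of $\F_n$ with the same cofactor $K_n$ as $f$, equivalently $p/f$ is a quasi-homogeneous first integral of $\F_n$ of positive degree $k-s$. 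Factoring $p$ into irreducible invariant curves — which by Lemma \ref{lemm} are among the factors of $h$, the Hamiltonian of the conservative part of $\F_n$ — and comparing cofactors, one finds $f$ must divide $p$, whence $p\in\Range\parent{\delta_k}\cap\Cor\parent{\delta_k}=\llave{0}$. Then $\bar T$, hence $\Theta$, is injective, and by the dimension count $\Theta$ is bijective, which is the assertion. I expect this last implication to be the delicate point: the hypothesis on the images only annihilates $\tilde{\Opl}_{n+k}(p)$, not $p$ itself, so one genuinely needs the invariant-curve and cofactor bookkeeping (together with $k>s$, $f\neq x$ and $f\neq y$) to conclude $f\mid p$ — exactly the structural information that Lemma \ref{leqrangprang} extracts in the explicit cases at hand.
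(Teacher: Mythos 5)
Your proof has the same architecture as the paper's: everything reduces to the dimension identity $\dim\parent{\qh_{k}^\t}-\dim\parent{\qh_{k-s}^\t}=\dim\parent{\qh_{n+k}^\t}-\dim\parent{\qh_{n+k-s}^\t}$ together with the injectivity statement $\Cor\parent{\delta_{k}}\cap\Ker\parent{\tilde{\Opl}_{n+k}}=\llave{0}$, after which the hypothesis on the images yields $\qh_{n+k}^\t=\tilde{\Opl}_{n+k}\parent{\Cor\parent{\delta_k}}\bigoplus\Range\parent{\delta_{n+k}}$ and hence existence and uniqueness. Two local differences are worth recording. For the dimension identity you give a self-contained lattice-point count based on $t_1\mid s$, $t_2\mid s$ and $s\geq t_1t_2$, whereas the paper enumerates the possible degrees of invariant curves ($s=t_1t_2$ or $s=2t_1t_2$ once $x$ and $y$ are excluded) and invokes Lemma 2.3 of \cite{AlgabaNonlinearity09}; your count is cleaner and a little more general. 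The identity $\tilde{\Opl}_{n+k}\parent{gf}=f\,\nabla g\cdot\F_n$, showing that $\tilde{\Opl}_{n+k}$ descends to the quotients, is correct and is a structural observation the paper does not make explicit, though it is not strictly needed once one works directly with the chosen complement.

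The delicate point is exactly where you locate it, and as written it does not close. From $\nabla p\cdot\F_n=K_np$ you factor $p$ into the branches $f_1,f_2$ of $\hx$ and compare cofactors, but equating the total cofactor to $K_n$ does \emph{not} force $f\mid p$: for $f=f_1=y-x^{n+1}$ the polynomial $p=f_2^{k/(n+1)}$ has cofactor exactly $K_n$ precisely when $|d|=1+\fracp{2(n+1)}{k-n-1}$ (e.g.\ $n=1$, $k=4$, $d=-3$, $p=(y+x^{2})^{2}$), and one may then choose $\Cor\parent{\delta_k}=\langle p\rangle$, so that both stated hypotheses of the lemma hold while its conclusion fails. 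Thus the cofactor bookkeeping genuinely needs the non-resonance condition of Lemma \ref{leqrangprang}, which is not among the hypotheses of the present lemma. You should know, however, that the paper's own step i) has the identical soft spot: the assertion that a first integral of $\G_n=\F_n-\fracp{K_n}{k}\FZERO$ vanishes on every invariant curve of $\G_n$ fails in the same exceptional case (there the other branch becomes a curve of singular points of $\G_n$). In the only place the lemma is used (Theorem \ref{teocuinvnilp}) the exceptional values of $d$ are excluded by hypothesis, and with that exclusion your cofactor comparison goes through and your argument is complete.
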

\begin{proof} We divide the proof in some steps.
\begin{itemize}
\item[i)] Firstly we prove that $\Cor\parent{\delta_{k}}\cap\Ker\parent{\tilde{\Opl}_{n+k}}=\llave{0}$.

If $p\in\Cor\parent{\delta_{k}}\cap\Ker\parent{\tilde{\Opl}_{n+k}}$ then $\tilde{\Opl}_{n+k}\parent{p}=0$, that is, $p$ is a first integral of the vector field $\G_{n}:=\F_{n}-\fracp{K_{n}}{k}\FZERO$ hence $p$ is vanished in all the invariant curves of $\G_{n}$ which are the same as those of $\F_{n}$ therefore $p\in\Range\parent{\delta_{k}}\cap\Cor\parent{\delta_{k}}=\llave{0}$.

\item[ii)] Secondly we prove that $\dim\parent{\tilde{\Opl}_{n+k}\parent{\Cor\parent{\delta_{k}}}}=
    \dim\parent{\qh_{n+k}^\t}-\dim\parent{\Range\parent{\delta_{n+k}}}$.

Taking into account that $\delta_{n+k}$ is injective we have
\[
\dim\parent{\qh_{n+k}^\t}-\dim\parent{\Range\parent{\delta_{n+k}}}=
\dim\parent{\qh_{n+k}^\t}-\dim\parent{\qh_{n+k-s}^\t}.
\]
As $\delta_{k}$ is injective and $\Cor\parent{\delta_{k}}\cap\Ker\parent{\tilde{\Opl}_{n+k}}=\llave{0}$, the operator $\tilde{\Opl}_{n+k}$ is also injective over $\Cor\parent{\delta_k}$ and therefore
\bean \dim\parent{\tilde{\Opl}_{n+k}\parent{\Cor\parent{\delta_{k}}}}&=&\dim\parent{\Cor\parent{\delta_{k}}}=
\dim\parent{\qh_{k}^\t}-\dim\parent{\Range\parent{\delta_{k}}}\\
&=&\dim\parent{\qh_{k}^\t}-\dim\parent{\qh_{k-s}^\t}
\eean

It is sufficient to prove that $\dim\parent{\qh_{n+k}^\t}-\dim\parent{\qh_{n+k-s}^\t}=
\dim\parent{\qh_{k}^\t}-\dim\parent{\qh_{k-s}^\t}$.
From Lemma \ref{lemm} the irreducible invariant curves of $\F_{n}$ are the irreducible factors of the Hamiltonian which as we know are the factors $x$, $y$, $y^{t_1}-\lambda x^{t_2}$, $\lambda\in\mathbb{R}\setminus\llave{0}$ or $(y^{t_1}-ax^{t_2})^2+b^2x^{2t_2}$, $a,b\in\mathbb{R}$, $b\neq 0$ and therefore $s=t_1$, $s=t_2$, $s=t_1t_2$ or $s=2t_1t_2$ respectively. Lemma 2.3 in \cite{AlgabaNonlinearity09} proves the case for $s=t_1t_2$ and for $s=2t_1t_2$. The case $s<t_1t_2$ is excluded because we do not consider the case when $x=0$ or $y=0$ be invariant curves.

\medskip

Up to now we have proved that $\qh_{n+k}^\t=\tilde{\Opl}_{n+k}\parent{\Cor\parent{\delta_k}}
\bigoplus\Range\parent{\delta_{n+k}}$ hence given $q\in\qh_{n+k}^\t$ there exists a unique $p_1\in\tilde{\Opl}_{n+k}\parent{\Cor\parent{\delta_k}}$ and a unique $p_2\in\Range\parent{\delta_{n+k}}$ such that $q=p_1+p_2$. On the other hand, since $\delta_{n+k}$ is injective and $\Cor\parent{\delta_{k}}\cap\Ker(\tilde{\Opl}_{n+k})=\llave{0}$, the operator $\tilde{\Opl}_{n+k}$ is also injective over $\Cor(\delta_k)$, and we can
assert that there exist a unique $p^{(1)}\in\Cor\parent{\delta_k}$ and $p^{(2)}\in\qh_{n+k-s}$ such that $p_1=\tilde{\Opl}_{n+k}\parent{p^{(1)}}$ and a unique $p_2=p^{(2)}f$ and this completes the proof.
\end{itemize}\end{proof}

The following result gives us information about the formal invariant curves of a nilpotent vector field whose first quasi-homogeneous component is non-conservative.

\begin{theorem}\label{teocuinvnilp} Consider $n\in\mathbb{N}$ and $\F=\F_n+\cdots=\sum_{j\geq n}\F_j$, $\F_j\in\QH_j^{(1,n+1)}$ and $\F_n=(y+dx^{n+1},(n+1)x^{2n+1}+(n+1)dx^ny)^T$. If $|d|\neq 1+\fracp{2(n+1)}{k-n-1}$ for all $k\in\mathbb{N}$, $k>n+1$, then there exist only two formal invariant curves which have the following form $C:=C_{n+1}+\sum_{j> n+1} C_j$ where $C_{n+1}=y-x^{n+1}$ or $C_{n+1}=y+x^{n+1}$ and $C_j\in\Cor\parent{\delta_{j}}$, being $\delta_j$ the linear operator  defined in Lemma \ref{leqrangprang} for $f=C_{n+1}$.
\end{theorem}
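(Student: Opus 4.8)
The plan is to construct the formal invariant curve $C$ term by term in the quasi-homogeneous expansion, using the splitting furnished by Lemmas \ref{leqrangprang} and \ref{leqhsumadirecta}. First I would recall that, by Lemma \ref{lemm}, the only quasi-homogeneous irreducible invariant curves of $\F_n$ are the factors of $\hx=-(y^2-x^{2n+2})/2$, namely $f=y-x^{n+1}$ and $f=y+x^{n+1}$ (this uses that $d\neq\pm 1$, so that $\F_n$ is genuinely of the non-conservative type with isolated origin; note $|d|\neq 1+\frac{2(n+1)}{k-n-1}$ for $k$ large forces $d\neq\pm1$ only in a limiting sense, but $d=\pm 1$ is separately excluded by the hypotheses of Proposition \ref{propreFN} cases B2)--B4), which I would invoke). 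Hence the leading term $C_{n+1}$ of any formal invariant curve must be (a scalar multiple of) one of these two; fix one of them, say $C_{n+1}=y-x^{n+1}$, with its cofactor $K_n=(n+1)(d-1)x^n\in\qh_n^\t$.

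Next I would set up the induction. Write $C=\sum_{j\geq n+1}C_j$ and $K=\sum_{j\geq n}K_j$ for an unknown cofactor, and expand the defining equation $\nabla C\cdot\F=KC$ into quasi-homogeneous degree. The degree-$n+k$ component (for $k>n+1$) reads
\[
\nabla C_{k}\cdot\F_{n}+\sum_{\substack{i+j=n+k\\ i<k}}\nabla C_i\cdot\F_j
=\sum_{\substack{a+b=n+k}}K_aC_b .
\]
Isolating the terms involving the unknowns $C_k$ and $K_k$: the $K_k$-contribution is $K_kC_{n+1}=K_k f$, and $\nabla C_k\cdot\F_n$ can be rewritten, using $\nabla C_k\cdot\FZERO = k\,C_k$ (Euler) and absorbing the piece $C_k$ contributes to the cofactor, as $\tilde{\Opl}_{n+k}(C_k)$ plus a term of the form $(\text{something})\cdot f$. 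The point is that everything known at this stage — the cross terms $\sum_{i<k}\nabla C_i\cdot\F_j$ and the lower-order cofactor interactions $\sum_{a>n}K_aC_b$ — assembles into a single element $q\in\qh_{n+k}^\t$. Lemma \ref{leqhsumadirecta}, whose hypothesis $\tilde{\Opl}_{n+k}(\Cor(\delta_k))\cap\Range(\delta_{n+k})=\{0\}$ is guaranteed by Lemma \ref{leqrangprang} precisely under the assumption $|d|\neq 1+\frac{2(n+1)}{k-n-1}$ (and $s=n+1=t_1t_2$ here, so the hypothesis $s\geq t_1t_2$ of Lemma \ref{leqhsumadirecta} holds), then gives a unique $p^{(1)}=C_k\in\Cor(\delta_k)$ and a unique $p^{(2)}\in\qh_{n+k-s}^\t$ with $q=\tilde{\Opl}_{n+k}(C_k)+p^{(2)}f$; reading off $K_k$ from $p^{(2)}$ (and $C_k$ from $p^{(1)}$) solves the degree-$(n+k)$ equation, and this choice is forced. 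Running $k=n+2,n+3,\dots$ produces the unique formal invariant curve with leading term $y-x^{n+1}$, all of whose higher components lie in $\Cor(\delta_j)$; the case $C_{n+1}=y+x^{n+1}$ is identical with the other cofactor.

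The main obstacle is the bookkeeping at each inductive step: one must check carefully that the known right-hand side $q$ really lands in $\qh_{n+k}^\t$ and that the $C_k$- and $K_k$-dependent parts separate cleanly into the $\tilde{\Opl}_{n+k}$-image and the $\Range(\delta_{n+k})$ summand, so that Lemma \ref{leqhsumadirecta} applies verbatim. A secondary point requiring care is the normalization/uniqueness claim for $C_{n+1}$: one must argue that a formal invariant curve cannot have leading term of degree $<n+1$ or of a degree not equal to $n+1$ (using that $\F_n$ is the lowest quasi-homogeneous component and that its only irreducible invariant curves sit in degree $n+1$), and that rescaling $C$ by a constant is the only freedom, so "only two" is the correct count once we fix, say, the coefficient of $y$ to be $1$. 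Everything else — the exactness of the operators, injectivity of $\delta_j$ and of $\tilde{\Opl}_{n+k}$ on $\Cor(\delta_k)$ — is already packaged in the preceding lemmas.
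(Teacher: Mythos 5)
Your plan follows the paper's proof essentially verbatim: fix the leading term as one of the two irreducible invariant curves of $\F_n$ supplied by Lemma \ref{lemm}, then determine $C_j$ and the cofactor coefficients inductively, using Lemma \ref{leqrangprang} to verify the transversality hypothesis and Lemma \ref{leqhsumadirecta} to split the known remainder uniquely into $\tilde{\Opl}_{n+j}\parent{\Cor\parent{\delta_j}}\bigoplus C_{n+1}\qh_{j-1}^\t$. The only slip is an index: at quasi-homogeneous degree $n+k$ the new cofactor coefficient multiplying $C_{n+1}$ is $K_{k-1}$, not $K_k$; otherwise your argument is the one in the paper.
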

\begin{proof} Note that $y-x^{n+1}$ and $y+x^{n+1}$ are the unique irreducible invariant curves of $\F_n$.
We must prove the existence of a cofactor of the form $K=\sum_{j\geq n}K_j$, $K_j\in\qh_j^\t$ such that
\bea\label{ecuCSexistcurvainv} \nabla C\cdot\F-CK&=&0.\eea

We will check by induction method that the equation (\ref{ecuCSexistcurvainv}) is satisfied degree by degree, for which we will choose appropriately $C_j$ and $K_{j-1}$ in each case.

Equation (\ref{ecuCSexistcurvainv}) for degree $2n+1$ is satisfied since we consider $K_n$ the cofactor of $C_{n+1}$, that is,
\bean \nabla C_{n+1}\cdot\F_n-C_{n+1}K_n&=&0.\eean

We assume that equation (\ref{ecuCSexistcurvainv}) is satisfied for degree $n+j-1$ and we want to prove that it is also satisfied for degree $n+j$ with $j>n+1$.
\bean
\parent{\nabla C\cdot\F-C K}_{n+j}&=&\nabla C_j\F_n+\sum_{i=1}^{j-n-1}\nabla C_{j-i}\cdot\F_{n+i}\\
&&-C_j K_n-C_{n+1}K_{j-1}-\sum_{i=1}^{j-n-2}C_{j-i}K_{n+i}\\
&=&\nabla C_j\parent{\F_n-\fracp{1}{j}K_n\FZERO}- C_{n+1} K_{j-1}+R_{n+j}.
\eean
where $R_{n+j}=\sum_{i=1}^{j-n-1}\nabla C_{j-i}\cdot\F_{n+i}-\sum_{i=1}^{j-n-2}C_{j-i}K_{n+i}$ is a quasi-homogeneous polynomial of degree $n+j$ determined by the previous values of $C_i$ and $K_{n+i}$.

Taking into account that $|d|\neq 1+\fracp{2(n+1)}{k-n-1}$ for all $k>n+1$, by applying Lemma \ref{leqrangprang} for $k=j$, we obtain $\tilde{\Opl}_{n+j}\parent{\Cor\parent{\delta_{j}}}\cap\Range\parent{\delta_{n+j}}=\llave{0}$ and by Lemma \ref{leqhsumadirecta} there exits a unique $R^{(1)}\in\Cor\parent{\delta_{j}}$ and a unique $R^{(2)}\in\qh_{j-1}^\t$ such that
\bean R_{n+j}&=&\tilde{\Opl}_{n+j}\parent{R^{(1)}}+ C_{n+1} R^{(2)}.\eean
Hence we have
\bean \parent{\nabla C\cdot\F-C K}_{n+j}&=&\tilde{\Opl}_{n+j}\parent{C_j}- C_{n+1} K_{j-1}+\tilde{\Opl}_{n+j}\parent{R_j^{(1)}}+ C_{n+1} R_{j-1}^{(2)}\\
&=&\tilde{\Opl}_{n+j}\parent{C_j+R_j^{(1)}}+ C_{n+1} \parent{R_{j-1}^{(2)}-K_{j-1}}.\eean
Therefore in order to satisfy equation (\ref{ecuCSexistcurvainv}) for degree $n+j$, it is sufficient to chose $K_{j-1}=R_{j-1}^{(2)}$ and $C_j=-R_j^{(1)}$.

As $\Ker\parent{\tilde{\Opl}_{r+j}}\subset\Range\parent{\delta_{j}}$ we have the uniqueness.
\end{proof}

\subsection{Necessary conditions of integrability}

For our study we will need the following results
\begin{proposition}\cite[Corollary  3.1]{A3}\label{cni}
If $\F_r:=\X_{\hx}+\mu\FZERO\in\QH_r^\t$ is irreducible and $\hx$ has multiple factors in its
decomposition over $\mathbb{C}[x,y]$, $\hx\neq$cte. and $\mu \not\equiv
0$, then $\F_r$ is non-analytically integrable.
\end{proposition}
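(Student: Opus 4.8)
The plan is to argue by contradiction: I attach to $\F_r$ a canonical rational $1$-form $\eta$ and show that an analytic first integral of $\F_r$ would force $\eta$ to have only simple poles, whereas a multiple factor of $\hx$ together with the irreducibility of $\F_r$ makes $\eta$ have a pole of order at least two. First I reduce to a quasi-homogeneous first integral: if $\F_r$ has an analytic first integral, then subtracting its value at $\zero$ and taking the lowest-degree term of its $\t$-quasi-homogeneous expansion produces, since $\F_r\in\QH_r^\t$, a nonconstant polynomial first integral $I\in\qh_m^\t$ with $m\ge 1$ and $\nabla I\cdot\F_r=0$. Factoring $I$ over $\C$ into distinct irreducible (hence quasi-homogeneous) factors $g_i$ with multiplicities $a_i\ge 1$, clearing denominators in $\sum_i a_i(\nabla g_i\cdot\F_r)/g_i=0$ and reducing modulo $g_j$ shows $g_j\mid\nabla g_j\cdot\F_r$, so each $g_i$ is an irreducible quasi-homogeneous invariant curve of $\F_r$ and hence, by Lemma \ref{lemm}, a factor of $\hx$. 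Writing $\hx=\prod_{j=1}^N f_j^{b_j}$ up to a constant, with the $f_j$ distinct irreducibles, we get $I=\prod_{j\in S}f_j^{a_j}$ for some $S\subseteq\{1,\dots,N\}$, so $dI/I=\sum_{j\in S}a_j\,df_j/f_j$ has only simple poles.

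Next I introduce the $1$-form. Writing $\F_r=(P,Q)^T=\X_{\hx}+\mu\FZERO$ and using $\X_{\hx}=(-\partial_y\hx,\partial_x\hx)^T$, $\FZERO=(t_1x,t_2y)^T$, a direct computation gives $P\,dy-Q\,dx=-d\hx+\mu(t_1x\,dy-t_2y\,dx)$, so
\[ \omega:=\frac{P\,dy-Q\,dx}{\hx}=-\frac{d\hx}{\hx}+\eta,\qquad\eta:=\frac{\mu\,(t_1x\,dy-t_2y\,dx)}{\hx}. \]
Both $\omega$ and $dI$ vanish on $\F_r$, so over the field of rational functions they are proportional; evaluating on $\FZERO$ and using $\FZERO\wedge\F_r=(r+|\t|)\hx$ (the splitting (\ref{componentes})) together with Euler's identity $\nabla I\cdot\FZERO=mI$ pins the proportionality factor, giving $\omega=-\frac{r+|\t|}{m}\,dI/I$. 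Hence $\eta=d\hx/\hx-\frac{r+|\t|}{m}\,dI/I=\sum_{j=1}^N b_j\,df_j/f_j-\frac{r+|\t|}{m}\sum_{j\in S}a_j\,df_j/f_j$, a $\C$-combination of the logarithmic forms $df_j/f_j$, so $\eta$ again has only simple poles.

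The contradiction now comes from the multiple factor. Pick $f_{j_0}$ with $b_{j_0}\ge 2$. I claim irreducibility of $\F_r$ forces $\mathrm{ord}_{f_{j_0}}(\mu)\le b_{j_0}-2$: if instead $f_{j_0}^{b_{j_0}-1}\mid\mu$, then from $\hx=f_{j_0}^{b_{j_0}}w$ the factor $f_{j_0}^{b_{j_0}-1}$ divides $\partial_x\hx$ and $\partial_y\hx$, hence (as $b_{j_0}\ge 2$) so does $f_{j_0}$, so $f_{j_0}$ divides both $P=-\partial_y\hx+\mu t_1x$ and $Q=\partial_x\hx+\mu t_2y$, contradicting irreducibility. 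Since the coefficients of $t_1x\,dy-t_2y\,dx$ are coprime, the order of vanishing of the numerator of $\eta$ along $\{f_{j_0}=0\}$ equals $\mathrm{ord}_{f_{j_0}}(\mu)$, so $\eta$ has a pole there of order $b_{j_0}-\mathrm{ord}_{f_{j_0}}(\mu)\ge 2$, contradicting the simple-pole description above; here $\mu\not\equiv 0$ and $\hx$ nonconstant are exactly what guarantee that $\eta$ and this pole genuinely exist. Therefore $\F_r$ admits no analytic first integral. The main obstacle is this last step, namely converting the qualitative hypothesis ``$\F_r$ irreducible'' into the sharp quantitative bound $\mathrm{ord}_{f_{j_0}}(\mu)\le b_{j_0}-2$ that produces a double pole along a multiple factor; the rest is the routine dictionary between polynomial first integrals, Darboux cofactors and logarithmic $1$-forms, together with the Euler-contraction identity that fixes the scalar.
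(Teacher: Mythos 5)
Your proof is correct. Bear in mind that the paper itself gives no argument for Proposition \ref{cni}: it is imported verbatim from \cite[Corollary 3.1]{A3}, so there is no in-paper proof to measure yours against. What you have written is a legitimate self-contained substitute, and it runs on the same engine as the residue machinery of \cite{A3} that the paper does quote in Theorem \ref{ci}: there the obstruction to integrability of $\X_{\hx}+\mu\FZERO$ is read off from the poles and residues of $\eta^{\rm hom}=\mu^{\rm hom}/(X^{\delta_x}Y^{\delta_y}\hx^{\rm hom})$, and a multiple factor of $\hx$ creates a pole of order at least two that no choice of exponents can absorb. You reach the same obstruction without homogenizing, via the identity $P\,dy-Q\,dx=-d\hx+\mu(t_1x\,dy-t_2y\,dx)$ and the contraction with $\FZERO$ (which uses $\FZERO\wedge\F_r=(r+|\t|)\hx$ and Euler's identity, both built into (\ref{componentes})). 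Two steps deserve to be spelled out in a full write-up: first, the proportionality factor between $\omega$ and $dI/I$ is a priori only a rational function, and it becomes the constant $-(r+|\t|)/m$ only after contracting with $\FZERO$ and using $m\ge 1$; second, the passage from ``each irreducible factor of the quasi-homogeneous first integral $I$ is an invariant curve'' to ``it divides $\hx$'' is exactly Lemma \ref{lemm} and uses $\hx\ne\mathrm{const}$. Both are handled correctly in your sketch. The genuinely new ingredient you supply --- that irreducibility of $\F_r$ forces $\mathrm{ord}_{f_{j_0}}(\mu)\le b_{j_0}-2$ along a factor of multiplicity $b_{j_0}\ge 2$, so that $\mu(t_1x\,dy-t_2y\,dx)/\hx$ has a genuine pole of order at least two while the logarithmic expression $\sum_j c_j\,df_j/f_j$ admits only simple poles --- uses each of the three hypotheses ($\F_r$ irreducible, $\hx$ with a multiple factor, $\mu\not\equiv 0$) exactly where it is needed, and it closes the argument.
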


\begin{theorem}\cite[Theorem 3.2]{A3}\label{ci}
Assume that $\F_r:=\X_{\hx}+\mu\FZERO\in\QH_r^\t$ is irreducible and $\hx$
has two or more than two factors, all of then simples in its
decomposition over $\mathbb{C}[x,y]$. System $\dot{\x}={\bf F}_r(\x)$
has a first integral  if and only if either $\mu\equiv 0$ or there
exists $n_x,n_y$, $n_i$, $i=1,\cdots, m$ non-negative numbers, not
all zeros, such that
 \bea\left\{\begin{array}{lr}Res[{\eta^{\rm
 hom}(X,1)},0]=\frac{(n_x+1)(r+|\t|)-M_0}{t_2M_0},
 &\mbox{ if } \delta_x=1,\\
 Res[{\eta^{\rm hom}(1,Y)}, 0]=-\frac{(n_y+1)(r+|\t|)-M_0}{t_1M_0},&\mbox{ if } \delta_y=1,\\
 Res[{\eta^{\rm hom}(1,Y)}, \lambda_i]=-\frac{(n_{i}+1)(r+|\t|)-M_0}{M_0},&
 i=1,\cdots,m,
 \end{array}\right. \eea
with $\eta^{\rm hom}(X,Y)=\frac{\mu^{\rm hom}(X,Y)}{X^{\delta_x}Y^{\delta_y}\hx^{\rm hom}(X,Y)}$,
$M_0=t_1(n_x+1)\delta_x+t_2(n_y+1)\delta_y+t_1t_2\sum_{j=1}^m(n_j+1)$ where $f^{\rm hom}$ for $f\in\qh_s^\t$, is the unique homogeneous polynomial verifying $f(x,y)=x^ny^mf^{\rm hom}(x^{t_2},y^{t_1})$ with $n,m\in\mathbb{N}\cup\llave{0}$.

Moreover, in this case, a first integral of degree $M_0$ is
\[
I=x^{(n_x+1)\delta_x}y^{(n_y+1)\delta_y}\prod_{i=1}^m
(y^{t_1}-\lambda_i x^{t_2})^{n_i+1}.
\]
\end{theorem}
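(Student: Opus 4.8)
The plan is to reduce the existence of a first integral to the existence of a quasi-homogeneous Darboux first integral built from the irreducible factors of $\hx$, and then to convert the associated cofactor identity into the residue conditions. Since $\F_r$ is quasi-homogeneous, if $\F_r$ has a formal first integral $I=\sum_{j\ge k}I_j$ with $I_j\in\qh_j^\t$, then its lowest-degree nonzero part $I_k$ already satisfies $\nabla I_k\cdot\F_r=0$; hence it suffices to look for $I\in\qh_k^\t$ nonconstant with $\nabla I\cdot\F_r=0$. The implication ``$\mu\equiv0\Rightarrow$ first integral'' is immediate, since then $\F_r=\X_\hx$ has the first integral $\hx$. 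So from now on assume $\mu\not\equiv0$ and let $I\in\qh_k^\t$ be a nonconstant first integral.

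First I would determine the form of $I$. By Euler's identity $\nabla I\cdot\FZERO=kI$ the equation $\nabla I\cdot\F_r=0$ is equivalent to $\nabla I\cdot\X_\hx=-k\mu I$, so $I$ is an invariant curve of $\X_\hx$ and hence of $\F_r$; by Lemma \ref{lemm} every irreducible factor of $I$ is then a factor of $\hx$. As $\hx$ has only simple factors, over $\mathbb{C}$ one has $\hx=\cte\prod_j g_j$ with the $g_j$ distinct irreducible quasi-homogeneous polynomials, each equal to $x$, to $y$, or to some $y^{t_1}-\lambda_i x^{t_2}$; thus $I=\cte\prod_j g_j^{c_j}$ with $c_j\in\Nzero$. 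Since $\F_r$ is irreducible it does not vanish identically on the invariant curve $\{g_j=0\}$, so the first integral $I$ is constant there; restricting the remaining factors to $\{g_j=0\}$ turns each of them into a nonzero monomial in the line parameter of positive degree, and constancy of $I$ then forces $\sum_{l\ne j}c_l=0$ whenever $c_j=0$, i.e. $I$ constant, a contradiction. Hence $c_j\ge1$ for every $j$. Writing the exponents as $n_x+1$, $n_y+1$, $n_i+1$ and setting $\delta_x=1$ iff $x\mid\hx$, $\delta_y=1$ iff $y\mid\hx$, we obtain the asserted form
\[
I=\cte\;x^{(n_x+1)\delta_x}\,y^{(n_y+1)\delta_y}\prod_{i=1}^m(y^{t_1}-\lambda_i x^{t_2})^{n_i+1},
\]
of degree $k=M_0=t_1(n_x+1)\delta_x+t_2(n_y+1)\delta_y+t_1t_2\sum_{j}(n_j+1)$; and because $\hx$ is not a first integral when $\mu\not\equiv0$, the integers $n_x,n_y,n_i$ cannot all vanish.

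The heart of the proof is then to read off the residue conditions from $\nabla I\cdot\X_\hx=-M_0\,\mu\,I$. Taking the logarithmic derivative of the Darboux product, pairing it with $\X_\hx$ (using $\X_\hx\cdot\nabla x=-\partial_y\hx$, $\X_\hx\cdot\nabla y=\partial_x\hx$, and the corresponding expression for $\nabla(y^{t_1}-\lambda_i x^{t_2})\cdot\X_\hx$ coming from the factorization $\hx=\cte\prod_j g_j$), and dividing by $\hx$, one obtains a single scalar identity between rational functions. Passing to the homogeneous representatives $f^{\mathrm{hom}}$ — which parametrizes the family of invariant curves $y^{t_1}=\lambda x^{t_2}$ by the single variable $\lambda$ — this identity is exactly the partial-fraction decomposition of $\eta^{\mathrm{hom}}=\mu^{\mathrm{hom}}/(X^{\delta_x}Y^{\delta_y}\hx^{\mathrm{hom}})$ whose residues at $X=0$, at $Y=0$ and at the points $\lambda_1,\dots,\lambda_m$ are prescribed by $n_x,n_y,n_i$ and $M_0$; equating them gives precisely the three displayed equations, the weights $t_1,t_2$ and the degree $M_0$ entering through the homogenization. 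Conversely, given non-negative integers $n_x,n_y,n_i$ not all zero satisfying those equations, the explicit $I$ above is a genuine polynomial of degree $M_0$, and the direct verification $\nabla I\cdot\F_r=0$ — using that the $\F_r$-cofactors of the irreducible factors of $\hx$ add up to $(r+|\t|)\mu$ — shows it is a first integral of degree $M_0$.

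The step I expect to be the main obstacle is the third one: carrying out the homogenization/dehomogenization bookkeeping carefully enough that the residues come out with precisely the denominators $t_2M_0$, $t_1M_0$ and $M_0$ and the correct signs, and checking that the relation ``the sum of all residues on $\mathbb{P}^1$ is zero'' is automatically consistent with the definition of $M_0$ and with $\sum_iK_i=(r+|\t|)\mu$, so that it imposes no additional constraint. A secondary delicate point is the opening reduction, together with the use of Lemma \ref{lemm} and the irreducibility of $\F_r$ to ensure that every quasi-homogeneous first integral is a full Darboux product of the factors of $\hx$.
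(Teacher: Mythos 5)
This theorem is imported verbatim from \cite[Theorem 3.2]{A3}; the present paper contains no proof of it to compare against, so your proposal can only be measured against the statement itself and the known argument in that reference. On that basis your outline is essentially the right one and essentially the one used in \cite{A3}: reduce to a quasi-homogeneous first integral by taking the lowest-degree part, use Euler's identity to rewrite $\nabla I\cdot\F_r=0$ as $\nabla I\cdot\X_{\hx}=-k\mu I$, invoke Lemma \ref{lemm} (plus irreducibility of $\F_r$ and the constancy of $I$ on each invariant branch of $\hx=0$) to force $I$ to be a Darboux product containing \emph{every} simple factor of $\hx$ with positive exponent, and then translate the resulting cofactor identity $\sum_j c_jK_j=-M_0\,\mu$ into the stated residue equations for $\eta^{\rm hom}$ via the homogenization $f\mapsto f^{\rm hom}$. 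All the structural steps are sound, including the observation that ``not all zero'' follows because $\hx$ itself is not a first integral when $\mu\not\equiv0$, and that the converse is the same cofactor computation run backwards. The one place where your write-up is an assertion rather than a proof is exactly the step you flag: the identification of $\sum_j c_jK_j=-M_0\,\mu$ with the partial-fraction data of $\mu^{\rm hom}/(X^{\delta_x}Y^{\delta_y}\hx^{\rm hom})$, with the precise denominators $t_2M_0$, $t_1M_0$, $M_0$ and signs, and the verification that the residue theorem on $\mathbb{P}^1$ is compatible with the definition of $M_0$ rather than an extra constraint. That computation is the actual content of \cite[Theorem 3.2]{A3} and would have to be carried out to call the proof complete; as it stands you have a correct skeleton with the decisive calculation deferred.
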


Now we study the integrability problem for nilpotent vector fields.

\begin{proposition}[Necessary condition of integrability]\label{pronilint}

Let $\F=\sum_{j\geq r}\F_j$, $\F_j\in\QH_j^\t$ be a nilpotent vector field such that the origin of $\dot{\x}=\F_r(\x)$ is isolated. If $\F$ is analytically integrable,  then there exists $\Phi_0\in\QH_0^\t$, $\det\parent{D\Phi_0(\zero)}\neq 0$ such that $\G:=\parent{\Phi_0}_*\F=\G_r+\cdots$, where $\G_r\in\QH_r^\t$, $\cdots$ are quasi-homogeneous terms of type $\t$ and degree greater than $r$ and $\G_r$ is one of the following vector fields.
\begin{description}
\item[i)]   $\G_r=\vedo{y}{x^{2n}}\in\QH_{r}^{\t}$, with  $\t=(2,2n+1)$, $r=2n-1$ and $I_{2(2n+1)}=2x^{2n+1}-(2n+1)y^2\in\qh_{2(2n+1)}^\t$ is a first integral of $\G_r$.
\item[ii)]  $\G_r=\vedo{y}{\sigma(n+1)x^{2n+1}}\in\QH_{r}^{\t}$, with $\t=(1,n+1)$, $r=n$, $\sigma=\pm 1$ and $I_{2(n+1)}=  -\sigma x^{2n+2}+y^2\in\qh_{2(n+1)}^\t$ is a first integral of $\G_r$.
\item[iii)] $\G_r=\vedo{y}{(n+1)x^{2n+1}}+dx^{n}\vedo{x}{(n+1)y}\in\QH_{r}^{\t}$, with $\t=(1,n+1)$, $r=n$, $d=\fracp{m_1-m_2}{m_1+m_2}\neq 0$, $m_1,m_2\in\mathbb{N}$, $m_1,m_2$, coprimes, and $I_{M}=(y-x^{n+1})^{m_1}(y+x^{n+1})^{m_2}\in\qh_{M}^\t$, with $M=(n+1)(m_1+m_2)$ is a primitive first integral of $\G_r$.
\end{description}
\end{proposition}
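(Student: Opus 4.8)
The plan is to combine Proposition \ref{propreFN} (which reduces $\F_r$, up to a linear-type change $\Psi_0\in\QH_0^\t$, to one of the cases B1)--B4)) with the necessary conditions of integrability coming from Lemma \ref{leintFintFr} and the results quoted from \cite{A3} (Proposition \ref{cni} and Theorem \ref{ci}). By Lemma \ref{leintFintFr}, analytic integrability of $\F$ forces $\F_r$ to be polynomially integrable; after composing with the $\Phi_0:=\Psi_0$ given by Proposition \ref{propreFN} we may assume $\G_r=(\Phi_0)_*\F_r$ is one of B1)--B4), and this $\G_r$ must itself be polynomially integrable. So the whole argument reduces to deciding, for each of the four normal preforms, exactly when it is polynomially integrable and writing down the first integral; the surviving possibilities are precisely i), ii), iii).

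First I would dispose of case B1): here $\G_r=(y,x^{2n})^T$ is conservative ($\mu\equiv 0$), hence polynomially integrable, with Hamiltonian proportional to $2x^{2n+1}-(2n+1)y^2$; this is exactly i). Next, case B3), $\G_r=(y,0)^T+dx^n\FZERO$ with $d\neq0$: here the associated $h$ is (up to a constant) $-y^2/2$, which has a single repeated factor $y$, and $\mu=(n+1)d\,x^n\not\equiv 0$; since $\G_r$ is irreducible (the origin is isolated) and $h$ has a multiple factor with $\mu\not\equiv0$, Proposition \ref{cni} shows $\G_r$ is not analytically integrable, so B3) is ruled out entirely. For cases B2) and B4), write $\G_r=\X_h+\mu\FZERO$ with $h=-\fracp12(y^2-\sigma x^{2n+2})$, $\sigma=\mp1$, and $\mu=(n+1)d\,x^n$. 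When $\sigma=-1$ (case B2)) the polynomial $h=-\fracp12(y^2+x^{2n+2})$ factors over $\C$ as a product of two distinct conjugate linear-type factors $(y-ix^{n+1})(y+ix^{n+1})$; when $\sigma=+1$ (case B4)) it factors as $(y-x^{n+1})(y+x^{n+1})$, again two distinct factors. In both subcases $h$ has exactly two simple factors, so Theorem \ref{ci} applies.

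The main work is then the case-by-case computation of the resonance/Res conditions of Theorem \ref{ci}. If $d=0$ we are in the conservative case $\mu\equiv0$ and get the first integral $y^2-\sigma x^{2n+2}$ — this is case ii) (and for $\sigma=-1$, $d=0$ it is also the $\sigma=-1$ instance of ii)). If $d\neq0$: for $\sigma=-1$ (B2)) I expect the residues at the two complex factors to be complex conjugate non-real numbers, so the rationality/positivity requirements of Theorem \ref{ci} cannot be met, and B2) with $d\neq0$ is non-integrable — leaving only ii) from the B2) branch. For $\sigma=+1$ (B4)) with $d\neq0$, computing $\eta^{\rm hom}=\mu^{\rm hom}/(h^{\rm hom})$ and evaluating $\Res$ at the two roots $\lambda=\pm1$ (here $t_1=1$, $t_2=n+1$, $r=n$, $|\t|=n+2$) yields two conditions of the form $\Res[\cdot,1]=-\frac{(n_1+1)(2n+2)-M_0}{M_0}$ and similarly at $-1$; carrying out the residue computation one finds the cofactors of $y\mp x^{n+1}$ are $(n+1)(d\mp1)x^n$, the residues are $\fracp12(d\mp1)$ wait — I would compute them as $\fracp{d-1}{2}$ and $\fracp{d+1}{2}$ up to the normalization, and imposing that both are of the required rational form with non-negative integers $n_1,n_2$ forces $d=\fracp{m_1-m_2}{m_1+m_2}$ with $m_1,m_2$ coprime positive integers, the first integral being $(y-x^{n+1})^{m_1}(y+x^{n+1})^{m_2}$ of degree $M=(n+1)(m_1+m_2)$. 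That is exactly case iii); that it is primitive follows from $\gcd(m_1,m_2)=1$. Finally one notes $d\neq1$ (from Proposition \ref{propreFN}, B4)) is consistent, corresponding to $m_2\neq0$.

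The step I expect to be the genuine obstacle is the bookkeeping in the last paragraph: translating the abstract statement of Theorem \ref{ci} into the explicit two equations for B4), keeping track of the homogenization convention $f(x,y)=x^ny^mf^{\rm hom}(x^{t_2},y^{t_1})$, the values $\delta_x=\delta_y=0$, $m=0$, and $M_0=t_1t_2\sum(n_j+1)=(n+1)(n_1+n_2+2)$, and then solving the resulting Diophantine-type system to recover precisely $d=(m_1-m_2)/(m_1+m_2)$ with coprimality. Everything else (B1), B3), and the $\mu\equiv0$ subcases) is immediate from the quoted results.
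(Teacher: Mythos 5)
Your proposal is correct and follows essentially the same route as the paper: reduce to the preforms B1)--B4) via Proposition \ref{propreFN}, invoke Lemma \ref{leintFintFr}, kill B3) with Proposition \ref{cni}, and settle B2) and B4) with the residue conditions of Theorem \ref{ci}. The only slip is in the hedged residue values for B4) (the paper's computation gives $\Res[\eta^{\rm hom}(1,Y),\pm 1]=\mp d$, not $\fracp{d\mp 1}{2}$), but this does not affect the conclusion $d=\fracp{m_1-m_2}{m_1+m_2}$ and you correctly flagged that step as the one requiring care.
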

\begin{proof} If the origin is an isolated singularity of $\dot{\x}=\F_r(\x)$, by applying Proposition \ref{propreFN}, there exists $\Phi_0\in\QH_0^\t$, $\det\parent{D\Phi_0(\zero)}\neq 0$ such that $\G:=\parent{\Phi_0}_*\F=\G_r+\cdots$  and $\G_r$ is of type {\bf B1), {\bf B2)}, {\bf B3)} or {\bf B4)}}. Moreover if $\F$ is integrable $\G$ is integrable and by Lemma \ref{leintFintFr} $\G_r$ is integrable.
\begin{itemize}
\item If $\G_r$ is of the form {\bf B1)} we have that $\G_r=(y,x^{2n})^T\in\QH_r^\t$ where $\t=(2,2n+1)$ and $r=2n$,  which is a quasi-homogeneous Hamiltonian vector field therefore integrable with a first integral of the form $I_{2(2n+1)}=2x^{2n+1}-(2n+1)y^2\in\qh_{2(2n+1)}^\t$. This corresponds to case {\bf i)}.

\item If $\G_r$ is of the form {\bf B2)}, we have that $\G_r=(y,-(n+1)x^{2n+1})^T+dx^n(x,(n+1)y)^T\in\QH_r^\t$ is irreducible, where $\t=(1,n+1)$ and $r=n$. In this case the Hamiltonian function and the divergence of $\G_r$ are $\hx=-y^2/2-x^{2n+2}/2=-1/2(y-ix^{n+1})(y+ix^{n+1})$, and $\mu=dx^n$. Applying Theorem \ref{ci} must be fulfilled $d=0$ or $d\neq 0$ with
\bean
\Res{\eta(1,y)}{i}=\fracp{d}{-i}=1-\fracp{(n_1+1)2(n+1)}{(n+1)(n_1+n_2+2)},\\
\Res{\eta(1,y)}{-i}=\fracp{d}{i}=1-\fracp{(n_2+1)2(n+1)}{(n+1)(n_1+n_2+2)}.
\eean
Such conditions are not satisfied so it should be $d=0$ and consequently $\G_r$ is a quasi-homogeneous Hamiltonian vector field with a first integral of the form $I_{2(n+1)}=x^{2n+2}+y^2$. This case corresponds to case {\bf ii)} for $\sigma=-1$.

\item If $\G_r$ is of the form {\bf B3)}, we have that $\G_r=(y,0)^T+dx^n(x,(n+1)y)^T\in\QH_r^\t$ with $d\neq 0$ is irreducible, where $\t=(1,n+1)$ and $r=n$. In this case the Hamiltonian function is $\hx=-\fracp{1}{2}y^2$ which has multiple factors . Applying Proposition \ref{cni} we have that $\G_r$ is not integrable and therefore neither it is $\F$.

\item If $\G_r$ is of the form {\bf B4)}, we have that $\G_r=(y,(n+1)x^{2n+1})^T+dx^n(x,(n+1)y)^T$ with $d\neq 1$ is irreducible. In this case the Hamiltonian functions and the divergence of $\G_r$ are $\hx=-\fracp{1}{2}(y-x^{n+1})(y+x^{n+1})$ and $\mu=dx^n$. Applying Theorem \ref{ci} $\G_r$ is integrable if, and only if, $d=0$ or $d\neq 0$ with
\bean \Res{\eta(1,y)}{1}=\fracp{d}{-\frac{1}{2}(2)}=1-\fracp{(n_1+1)2(n+1)}{(n+1)(n_1+n_2+2)}=\fracp{n_2-n_1}{n_1+n_2+2},\\
\Res{\eta(1,y)}{-1}=\fracp{d}{-\frac{1}{2}(-2)}=1-\fracp{(n_2+1)2(n+1)}{(n+1)(n_1+n_2+2)}=-\fracp{n_1-n_2}{n_1+n_2+2}\eean
If $d=0$ we have a first integral of the form $I_{2(n+1)}=-x^{2n+2}+y^2$. This case corresponds to case {\bf ii)} for $\sigma=1$. If $d\neq 0$ we consider $m_1:=n_1+1$, $m_2:=n_2+1$, then $d=\fracp{m_1-m_2}{m_1+m_2}\neq 0$ with $m_1,m_2\in\mathbb{N}$, coprimes and a first integral is $I_M=(y-x^{n+1})^{m_1}(y+x^{n+1})^{m_2}\in\qh_M^\t$ with $M=(n+1)(m_1+m_2)$. $I_M$ is a primitive first integral since $m_1,m_2$ are coprimes. This corresponds to case {\bf iii)}.
\end{itemize}
\end{proof}
\begin{remark}
The study of integrability for the cases {\bf i)} and {\bf ii)} was already solved in \cite{AlgabaNonlinearity09}. Therefore the only case to study is the case {\bf iii)}. That is, we focus in the study of the analytic integrability of vectors fields of the form
\bea\label{SistObjestu1}
\F&=&\sum_{j\geq n} \F_j, \quad \F_j\in\QH_j^\t, \ n\in\mathbb{N}, \ \ \t=(1,n+1),
\eea
where
\bea
\nonumber \F_n&=&\vedo{y+dx^{n+1}}{(n+1)x^{2n+1}+(n+1)dx^ny}, \, d=\fracp{m_1-m_2}{m_1+m_2}, \, m_1,m_2\in\mathbb{N}, \mbox{ coprimes}.
\eea
\end{remark}

The following theorem provides a necessary condition on the analytic integrability of system (\ref{SistObjestu1}).

\begin{theorem}\label{teointprimeranilp}
If the vector field (\ref{SistObjestu1}) is analytically integrable, then a first integral of $\F$ is $I=I_M+\cdots$ with $I_M=(y-x^{n+1})^{m_1}(y+x^{n+1})^{m_2}\in\qh_M^\t$, $M=(n+1)(m_1+m_2)$.
\end{theorem}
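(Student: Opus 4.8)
The plan is to bootstrap from the two results we already have in hand: Lemma~\ref{leintFintFr} (analytic integrability of $\F$ forces polynomial integrability of $\F_n$, which by Proposition~\ref{pronilint} case {\bf iii)} gives the primitive first integral $I_M=(y-x^{n+1})^{m_1}(y+x^{n+1})^{m_2}$ of $\F_n$), and Theorem~\ref{teocuinvnilp} (under the stated arithmetic condition on $d$, the only formal invariant curves of $\F$ are the two extensions $C^{\pm}=(y\mp x^{n+1})+\sum_{j>n+1}C_j^{\pm}$ with $C_j^{\pm}\in\Cor(\delta_j)$). First I would observe that $d=\fracp{m_1-m_2}{m_1+m_2}$ satisfies $|d|<1$, so in particular $|d|\neq 1+\fracp{2(n+1)}{k-n-1}$ for every $k>n+1$ (the right-hand side is always $>1$), hence Theorem~\ref{teocuinvnilp} applies and $\F$ has exactly the two formal invariant curves $C^{+}$ and $C^{-}$, with cofactors $K^{\pm}=\sum_{j\geq n}K_j^{\pm}$ whose leading terms are $K_n^{+}=(n+1)(d-1)x^n$ and $K_n^{-}=(n+1)(d+1)x^n$.

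Next, suppose $\F$ is analytically (equivalently formally) integrable with first integral $H=\sum_{j\geq p}H_j$, $H_j\in\qh_j^\t$, $H_p\not\equiv0$, $p$ the lowest degree. Then $\nabla H\cdot\F=0$; looking at the lowest-degree term gives $\nabla H_p\cdot\F_n=0$, so $H_p$ is a quasi-homogeneous first integral of $\F_n$. Since $I_M$ is a \emph{primitive} first integral of $\F_n$ (the exponents $m_1,m_2$ being coprime), every quasi-homogeneous first integral of $\F_n$ is a scalar multiple of a power $I_M^{\,\ell}$; the one of lowest degree is $I_M$ itself, so after scaling $H_p=I_M$ and $p=M$. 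This already pins down the leading term; the remaining task is to upgrade "$\F$ has \emph{some} first integral" to "$\F$ has a first integral whose full expansion starts exactly with $I_M$", which is immediate here since we just showed the leading term of \emph{any} first integral is a multiple of a power of $I_M$, and the minimal-degree choice is $I_M$.

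The cleaner and more robust route, which I would actually write, is through invariant curves. Any first integral $H$ of $\F$ vanishes on the invariant curves of $\F$, and more to the point $H$ itself is, up to units, a product of the irreducible invariant curves of $\F$: by Theorem~\ref{teocuinvnilp} these are precisely (formal multiples of) $C^{+}$ and $C^{-}$, so $H = U\cdot (C^{+})^{a}(C^{-})^{b}$ for a formal unit $U$ and non-negative integers $a,b$ — and the constraint $\nabla H\cdot\F=0$ together with the cofactor identities $\nabla C^{\pm}\cdot\F = K^{\pm}C^{\pm}$ forces $a\,K_n^{+}+b\,K_n^{-}=0$ at leading order, i.e. $a(n+1)(d-1)+b(n+1)(d+1)=0$, which with $d=\fracp{m_1-m_2}{m_1+m_2}$ gives $a/b=m_1/m_2$, so the minimal choice is $a=m_1$, $b=m_2$. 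Since $C^{\pm}=(y\mp x^{n+1})+(\text{higher order})$, the leading quasi-homogeneous term of $(C^{+})^{m_1}(C^{-})^{m_2}$ is exactly $(y-x^{n+1})^{m_1}(y+x^{n+1})^{m_2}=I_M$, and one checks $(C^{+})^{m_1}(C^{-})^{m_2}$ is genuinely a first integral (its cofactor vanishes), so $I=(C^{+})^{m_1}(C^{-})^{m_2}=I_M+\sum_{j>M}I_j$ is the desired first integral.

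The main obstacle is the step asserting that any formal first integral of $\F$ must factor, up to a formal unit, as a product of \emph{powers of} the irreducible formal invariant curves $C^{+},C^{-}$ — i.e., that there is no "hidden" factor and that the unit carries no further invariant-curve content. This is where Theorem~\ref{teocuinvnilp}'s exhaustive classification of formal invariant curves does the heavy lifting: if $H$ had an irreducible factor other than $C^{\pm}$, that factor would itself be an invariant curve of $\F$ (since $\F(H)=0$ makes each irreducible factor of $H$ invariant), contradicting the theorem; and a formal unit $U$ is nowhere zero so contributes no invariant curve, but one must still argue $U$ can be absorbed, which follows because $\nabla(\log U)\cdot\F$ must then be a polynomial combination forcing $U$ constant along the flow, hence (the leading-order cofactor balance already having fixed the exponents) $U$ is forced to be a formal first integral itself of strictly larger leading degree, and iterating/minimality finishes it. I would present this last point carefully, as it is the only place where "formal" versus "convergent" and the precise meaning of primitivity interact.
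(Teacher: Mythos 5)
Your strategy coincides with the paper's: use Proposition \ref{pronilint} to get $d=\fracp{m_1-m_2}{m_1+m_2}$ and the primitive integral $I_M$ of $\F_n$, note $|d|<1$ so Theorem \ref{teocuinvnilp} applies and $\F$ has exactly the two formal invariant curves $C_1=(y-x^{n+1})+\cdots$, $C_2=(y+x^{n+1})+\cdots$, and factor a given first integral as a unit times powers of these curves. The gap is in your concluding step. You assert that $(C^{+})^{m_1}(C^{-})^{m_2}$ ``is genuinely a first integral (its cofactor vanishes)'', but you only verify the cancellation $m_1K^{+}+m_2K^{-}=0$ at the leading quasi-homogeneous order; the cofactors are full formal series and the cancellation generally fails at higher orders, so the bare product of the two curves is in general \emph{not} a first integral. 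The example of Section 5 makes this concrete: there the invariant curves are exactly $y\mp x^{2}$ and the first integral is $(y-x^{2})(y+x^{2})^{2}\exp(-3b_{2}x)$, so the unit cannot be dropped. Your handling of the unit is likewise off: from $0=\nabla H\cdot\F$ with $H=U(C^{+})^{a}(C^{-})^{b}$ one gets $\nabla U\cdot\F=-U\,(aK^{+}+bK^{-})$, which need not vanish, so $U$ is not itself a first integral and cannot be ``absorbed'' by the argument you sketch. Your first route has the same hole in a different guise: knowing that the leading term of \emph{every} first integral is a power of $I_M$ does not by itself produce a first integral whose leading term is exactly $I_M$ rather than $I_M^{\,l}$ with $l>1$ --- and that is precisely what the theorem claims.

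The missing idea, which is how the paper closes the argument, is root extraction. Writing the given first integral as $\tilde I=C_1^{s_1}C_2^{s_2}u$ with $u=1+\cdots$ a unit, the lowest-degree part $(y-x^{n+1})^{s_1}(y+x^{n+1})^{s_2}$ is a quasi-homogeneous first integral of $\F_n$, hence by primitivity of $I_M$ one has $s_1=lm_1$, $s_2=lm_2$ for some $l\in\mathbb{N}$. Since $u$ has constant term $1$ it admits a formal $l$-th root, so $\tilde I=\bigl(C_1^{m_1}C_2^{m_2}u^{1/l}\bigr)^{l}$, and $\nabla\tilde I\cdot\F=0$ in the integral domain of formal power series forces $\nabla\bigl(C_1^{m_1}C_2^{m_2}u^{1/l}\bigr)\cdot\F=0$. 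The desired integral is $I=C_1^{m_1}C_2^{m_2}u^{1/l}=I_M+\cdots$, with the (in general nontrivial) unit $u^{1/l}$ retained. With this step inserted, and the false claim about the bare product removed, your argument matches the paper's proof.
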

\begin{proof} If $\F$ is integrable, $\F_n$ has a quasi-homogeneous first integral and by Proposition \ref{pronilint} there exist $m_1,m_2\in\mathbb{N}$ coprimes, such that $d=\fracp{m_1-m_2}{m_1+m_2}$ and a first integral of $\F_n$ is  $I_M=(y-x^{n+1})^{m_1}(y+x^{n+1})^{m_2}\in\qh_M^\t$ with $M=(n+1)(m_1+m_2)$.

On the other hand $d=\fracp{m_1-m_2}{m_1+m_2}$ therefore $|d|=\fracp{|m_1-m_2|}{m_1+m_2}<1$ and consequently $|d|\neq 1+\fracp{2(n+1)}{k-n-1}$ for all $k>n+1$.

Applying Theorem \ref{teocuinvnilp} we can assert that there exist only two irreducibles formal invariant curves of $\F$ passing through the origin and having the form $C_1=(y-x^{n+1})+\cdots$ and $C_2=(y+x^{n+1})+\cdots$. Therefore a first integral of $\F$ must be of the form $\tilde{I}=C_1^{s_1}C_2^{s_2}u$ where $u=1+\cdots$ is a unity. Considering the lower quasi-homogeneous degree results that $(y-x^{n+1})^{s_1}(y+x^{n+1})^{s_2}$ is a first integral of $\F_n$ and since $I_M$ is also a first integral of $\F_n$ it must exist $l\in\mathbb{N}$ such that $s_1=lm_1$, $s_2=lm_2$. Hence
\bean\tilde{I}&=&C_1^{lm_1}C_2^{lm_2}u=\parent{C_1^{m_1}C_2^{m_2}u^{1/l}}^l\eean
Therefore if we denote by $I=C_1^{m_1}C_2^{m_2}u^{1/l}=I_M+\cdots$ this function is also a first integral of $\F$.
\end{proof}

\section{Normal Form}

The analysis of normal forms for planar vector fields and related
questions as center problem or integrability problem has been considered
in \cite{AlgabaNonlinearity09}. In this section we determine an orbital normal form for a general vector field $\F=\F_n+\cdots$, where the lowest degree quasi-homogeneous of $\F$, $\F_n=\X_{\hx}+\mu\FZERO\in\QH_n^\t$,
 with $\hx\in\qh_{n+|\t|}^\t$ and $\mu\in\qh_n^\t$, conservative-dissipative decomposition of $\F_n$), that is, through the paper $\hx$ will represent the conservative part of $\F_n$ and $\mu$ will be the dissipative part of $\F_n$, see expression (\ref{componentes}).

A conjugated normal form of a nilpotent singularity in the generalized saddle case is studied in \cite{BaiderSanders1992} and an orbital normal form in \cite{Strozyna_Zoladek_2003}. In this work we show, in Theorem \ref{NFhfsimples}, another orbital normal form in the generalized saddle more convenient for the analysis of the integrability problem.

 The following subspaces of $\QH_k^\t$ will be useful in the study of
the homological  operator under orbital equivalence that we will see later.
 \bean
 \calc_k^\t&:=&\llave{\X_{g_{k+|\t|}}: g_{k+|\t|}\in\Delta_{k+|\t|} \mbox{, a complementary subspace to} \ \hx\qh_{k-n}^\t},\\
\cald_k^\t&:=&\llave{\eta_k\FZERO : \eta_k\in\qh_{k}^\t},\\
\calf_k^\t&:=&\llave{\lambda_{k-n}\F_{n} : \lambda_{k-n}\in\qh_{k-n}^\t}.
 \eean

Beyond the splitting, the following linear operator also plays an important
role in this study (see also  \cite{AlgabaNonlinearity09}).
\bea\label{DefOpl}\begin{array}{rcl}\Opl_{k}&:&\qh_{k-n}^{\t}
\longrightarrow \qh_{k}^{\t}\\
 && \mu_{k-n} \longrightarrow \nabla\mu_{k-n}\cdot\F_{n},\end{array}
\eea

In order to calculate a normal form of the vector field we need a new decomposition of the quasi-homogeneous vector fields
\begin{lemma} If $\hx\neq 0$, then $\QH_k^\t=\calc_k^\t\bigoplus\cald_k^\t\bigoplus\calf_k^\t$, for all $k\in\N$. Moreover if $\P_k\in\QH_k^\t$, there exist $g\in\Delta_{k+|\t|}$, $\eta\in\qh_k^\t$ and $\lambda\in\qh_{k-n}^\t$ such that $$\P_k=\X_{g}+\eta\FZERO+\lambda\F_n,$$ where
\bean g=\fracp{\Proy_{\Delta_{k+|\t|}}\FZERO\wedge\P_k}{k+|\t|},\ \
\lambda=\fracp{\Proy_{\hx\qh_{k-n}^\t}\FZERO\wedge\P_k}{(n+|\t|)\hx},\ \ \eta=\fracp{\diverg\parent{\P_k}-\nabla\lambda\cdot\F_n-\lambda\diverg\parent{\F_n}}{n+|\t|}.
\eean
\end{lemma}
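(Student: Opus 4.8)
The statement to prove is the direct-sum decomposition $\QH_k^\t=\calc_k^\t\oplus\cald_k^\t\oplus\calf_k^\t$ together with the explicit formulas for the three components $g$, $\eta$, $\lambda$ of an arbitrary $\P_k\in\QH_k^\t$. The plan is to first establish the uniqueness of the decomposition (i.e. that the sum is direct), and then to verify that the proposed formulas do produce a valid decomposition, which simultaneously gives existence. I would start from the known wedge-product/divergence machinery already recalled in the excerpt around equation~(\ref{componentes}): any $\P_k\in\QH_k^\t$ can be uniquely written as $\X_{\hgx}+\nu\FZERO$ with $\hgx=\frac{1}{n+|\t|}\FZERO\wedge\P_k\in\qh_{k+|\t|}^\t$ and $\nu=\frac{1}{n+|\t|}\diverg(\P_k)\in\qh_k^\t$ (here I am using $n+|\t|$ because $\F_n$ has degree $n$; one must be slightly careful that the normalizing constant matches the degree of $\P_k$, but the operator $\P_k\mapsto(\FZERO\wedge\P_k,\,\diverg\P_k)$ is a linear isomorphism of $\QH_k^\t$ onto $\qh_{k+|\t|}^\t\times\qh_k^\t$ regardless). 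Thus the whole problem is transported to a statement about scalar quasi-homogeneous polynomials: writing $\FZERO\wedge\F_n=(n+|\t|)\hx$ and $\diverg\F_n=(n+|\t|)\mu$, the term $\lambda\F_n$ contributes $(n+|\t|)\lambda\hx$ to the Hamiltonian slot and $\nabla\lambda\cdot\F_n+\lambda\diverg(\F_n)=\diverg(\lambda\F_n)$ to the divergence slot, while $\X_g$ contributes $(k+|\t|)g$ to the Hamiltonian slot and $0$ to the divergence slot, and $\eta\FZERO$ contributes $0$ to the Hamiltonian slot and $\diverg(\eta\FZERO)=(n+|\t|)... $ wait — $\diverg(\eta\FZERO)=\nabla\eta\cdot\FZERO+\eta\,\diverg\FZERO=(\deg_\t\eta)\eta+|\t|\eta=(k+|\t|)\eta$ to the divergence slot.

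So concretely: given $\P_k$ with Hamiltonian part $\hgx\in\qh_{k+|\t|}^\t$ and dissipative part determined by $\diverg\P_k$, I would solve the system by reading off the three scalar equations. First, since $\hx\neq 0$ and $\qh_{k+|\t|}^\t=\hx\,\qh_{k-n}^\t\oplus\Delta_{k+|\t|}$ by the very definition of $\Delta_{k+|\t|}$ as a complement, decompose $\hgx=\hx\,\lambda_0+g_0$ uniquely with $\lambda_0\in\qh_{k-n}^\t$, $g_0\in\Delta_{k+|\t|}$; up to the normalizing constant $(n+|\t|)$ this forces $\lambda=\frac{\Proy_{\hx\qh_{k-n}^\t}(\FZERO\wedge\P_k)}{(n+|\t|)\hx}$ and $g=\frac{\Proy_{\Delta_{k+|\t|}}(\FZERO\wedge\P_k)}{k+|\t|}$ (the division by $\hx$ makes sense precisely because the projection lands in the ideal $\hx\,\qh_{k-n}^\t$, and it is well-defined since $\mathbb{C}[x,y]$ is a domain). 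Once $\lambda$ is fixed, the divergence equation $\diverg\P_k=\diverg(\lambda\F_n)+(k+|\t|)\eta$ determines $\eta$ uniquely as $\eta=\frac{\diverg(\P_k)-\diverg(\lambda\F_n)}{k+|\t|}$; expanding $\diverg(\lambda\F_n)=\nabla\lambda\cdot\F_n+\lambda\,\diverg\F_n$ recovers the stated formula (modulo checking the constant in the denominator, which should be $k+|\t|$ rather than $n+|\t|$ — I would double-check the paper's convention here, since the displayed formula writes $n+|\t|$; possibly the paper normalizes $\Opl_k$ and $\FZERO$ so that the homological counting gives $n+|\t|$, and I would align with whatever makes the three pieces consistent). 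Because at each stage the unknown was uniquely determined, the decomposition is unique, hence the sum is direct; and because we exhibited a solution, every $\P_k$ decomposes, hence the sum is all of $\QH_k^\t$.

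To finish, I would separately confirm that the three summands are genuinely subspaces of $\QH_k^\t$ of the right degrees: $\X_g$ with $g\in\qh_{k+|\t|}^\t$ lies in $\QH_k^\t$ since the Hamiltonian field lowers $\t$-degree by $|\t|$; $\eta\FZERO$ with $\eta\in\qh_k^\t$ lies in $\QH_k^\t$ since $\FZERO\in\QH_0^\t$; and $\lambda\F_n$ with $\lambda\in\qh_{k-n}^\t$ lies in $\QH_k^\t$ since $\F_n\in\QH_n^\t$. The equality $\dim\QH_k^\t=\dim\calc_k^\t+\dim\cald_k^\t+\dim\calf_k^\t$ then also follows: $\dim\cald_k^\t=\dim\qh_k^\t$, $\dim\calf_k^\t=\dim\qh_{k-n}^\t$ (the map $\lambda\mapsto\lambda\F_n$ is injective as $\F_n\neq 0$ and the ring is a domain), $\dim\calc_k^\t=\dim\Delta_{k+|\t|}=\dim\qh_{k+|\t|}^\t-\dim(\hx\qh_{k-n}^\t)=\dim\qh_{k+|\t|}^\t-\dim\qh_{k-n}^\t$, and these add up correctly because $\dim\QH_k^\t=\dim\qh_{k+|\t|}^\t+\dim\qh_k^\t$ from the wedge/divergence isomorphism.

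\textbf{Main obstacle.} The only delicate point is the bookkeeping of the normalizing constants ($k+|\t|$ versus $n+|\t|$) in the three formulas and verifying that the projection $\Proy_{\hx\qh_{k-n}^\t}(\FZERO\wedge\P_k)$ really is divisible by $\hx$ with quotient in $\qh_{k-n}^\t$ — this is immediate from the definition of the complement $\Delta_{k+|\t|}$ but must be stated. Everything else is linear algebra once the problem is transported through the conservative–dissipative isomorphism of $\QH_k^\t$; I expect no real difficulty there, only care in matching the paper's sign and scaling conventions for $\X_{(\cdot)}$, $\FZERO\wedge(\cdot)$, and $\diverg$.
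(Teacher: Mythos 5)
Your proposal is correct and follows essentially the same route as the paper: both arguments rest on the conservative--dissipative splitting of $\QH_k^\t$, the decomposition $\qh_{k+|\t|}^\t=\Delta_{k+|\t|}\oplus\hx\,\qh_{k-n}^\t$, and the computation of $\FZERO\wedge(\cdot)$ and $\diverg(\cdot)$ on the three summands; you merely package the paper's separate pairwise-intersection checks and constructive surjectivity argument into a single triangular linear system whose unique solvability yields directness and existence at once. Your suspicion about the normalizing constant is also well founded: the correct denominator in the formula for $\eta$ is $k+|\t|$, exactly as you compute from $\diverg(\eta\FZERO)=(k+|\t|)\eta$, and the paper's own proof indeed derives $\eta=\bigl(\diverg(\P_k)-\nabla\lambda\cdot\F_n-\lambda\,\diverg(\F_n)\bigr)/(k+|\t|)$, so the $n+|\t|$ in the lemma statement is a typo.
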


\begin{proof}
It is obvious that
$\calc_k^\t+\cald_k^\t+\calf_k^\t\subseteq\QH_k^\t$.

Moreover:
 \begin{itemize}
 \item
$\calc_k^\t\cap\cald_k^{\t}=\llave{\zero}$ trivially (see
(\ref{componentes})).
 \item
$\cald_k^\t\cap\calf_k^\t=\llave{\zero}$, because otherwise, there
is some $\P_k\in\cald_k^\t\cap\calf_k^\t\setminus\llave{\zero}$. Then, one can find
$\lambda_{k-n}\in\qh_{k-n}^\t\setminus\llave{0}$ such that
$\P_k=\lambda_{k-n}\F_{n}$, and there
exists $\mu_k\in\qh_k^\t$ such that $\P_k=\mu_k\FZERO$. Hence,
$0=\FZERO\wedge\parent{\mu_k\FZERO}=\FZERO\wedge\P_k=\FZERO\wedge\parent{\lambda_{k-n}\F_n}=(n+|\t|)\lambda_{k-n}\hx$, which is a contradiction.
 \item
$\calc_k^\t\cap\calf_k^\t=\llave{\zero}$, because otherwise, there
is some $\P_k\in\calc_k^\t\cap\calf_k^\t\setminus\llave{\zero}$.
Then, $\P_k=\X_{\gx}=\lambda_{k-n}\F_{n}$ where
$\lambda_{k-n}\in\qh_{k-n}^\t\setminus\llave{0}$ and
$\gx\in\Delta_{k+|\t|}\setminus\llave{0}$. Therefore $\P_k = \lambda_{k-n} \F_{n} = \lambda_{k-n} \X_{\hx} + \lambda_{k-n} \mu
\FZERO$ and applying (\ref{componentes}) we obtain $\lambda_{k-n}\X_{\hx}=\fracp{n+|\t|}{k+|\t|} \X_{\lambda_{k-n}\hx} +\fracp{1}{k+|\t|}
\parent{\nabla\lambda_{k-n}\cdot\X_{\hx}}\FZERO$ and then
\bea\label{ecumukr}\P_k&=& \X_{\fracp{n+|\t|}{k+|\t|}\lambda_{k-n}\hx} +
\parent{\fracp{1}{k+|\t|} \parent{\nabla\lambda_{k-n}\cdot\X_{\hx}}
+ \lambda_{k-n} \mu}\FZERO,
\eea
Using (\ref{componentes}), we get
$\gx=\fracp{n+|\t|}{k+|\t|}\lambda_{k-n}\hx$. Hence
$\gx\in\Delta_{k+|\t|}\cap\hx\qh_{k-n}^\t=\llave{0}$,
which is a contradiction.
\end{itemize}

Consequently, it {remains} to show that
$\QH_k^\t\subseteq\calc_k^\t+\cald_k^\t+\calf_k^\t$. Let us consider
$\P_k\in\QH_k^\t$. From (\ref{componentes}), we can write
$\P_k=\X_{\hgx}+\mu_k\FZERO$, for some $\hgx\in\qh_{k+|\t|}^\t$,
$\mu_k\in\qh_k^\t$. As
$\qh_{k+|\t|}^\t=\Delta_{k+|\t|}\bigoplus\hx\qh_{k-n}^\t$,
we can also write $\hgx=\gx+\tgx\hx$ for some
$\gx\in\Delta_{k+|\t|}$, $\tgx\in\qh_{k-n}^\t$. Then,
$\P_k=\X_{\gx}+\X_{\tgx\hx}+\mu_k\FZERO$. Using (\ref{ecumukr}) for
$\tgx \F_n$, we obtain $\tgx\F_{n}=\X_{\fracp{n+|\t|}{k+|\t|}\tgx\hx}
+\parent{\fracp{1}{k+|\t|}\parent{\nabla\tgx\cdot\X_{\hx}}+\tgx\mu}\FZERO$.
So, $\X_{\tgx\hx} = \fracp{k+|\t|}{n+|\t|}\tgx\F_{n} -
\parent{\fracp{1}{n+|\t|}\parent{\nabla\tgx\cdot\X_{\hx}}+
\fracp{k+|\t|}{n+|\t|}\tgx\mu}\FZERO$, and then
 \bean
\P_k&=&\X_{\gx}+\parent{\mu_k-\fracp{1}{n+|\t|}\parent{\nabla\tgx\cdot\X_{\hx}}-
\fracp{k+|\t|}{n+|\t|}\tgx\mu}\FZERO
+\fracp{k+|\t|}{n+|\t|}\tgx\F_{n}.\eean
Only remains to find the expressions of $g$, $\eta$ and $\lambda$.
$$\D_0 \wedge \P_k = \D_0 \wedge (\X_{g} + \eta\D_0 + \lambda\F_n) = (k+|\t|)g + (n+|\t|)\lambda\hx$$
Therefore $g =\frac{Proy_{\Delta_{k+|\t|}}(\D_0 \wedge
\P_k)}{k+|\t|}$ and $\lambda =
\frac{Proy_{\hx\qh_{k-n}^{\t}}(\D_0 \wedge \P_k)}{(n+|\t|)\hx}.$
On the other hand  $\diverg(\P_k)=(k+|\t|)\eta+
\nabla\lambda\cdot\F_n+\lambda\diverg(\F_n)$, that is,
$\eta=\frac{\diverg(\P_k)-\nabla
\lambda\F_n-\lambda\diverg(\F_n)}{k+|\t|}$.
\end{proof}

The above result allows to define the corresponding projectors
$\Pi^{\co}$, $\Pi^{\di}$  and $\Pi^{\fp}$. Also, we can identify
$\QH_k^\t = \calc_k^\t \bigoplus \cald_k^\t \bigoplus \calf_k^\t
\equiv \calc_k^\t \times \cald_k^\t \times \calf_k^\t$. We will
denote $\Pi^{\di}\parent{\P_k}=\P_k^{\di}:=
\Proy_{\cald_k^{\t}}\parent{\P_k}$,
$\Pi^{\co}\parent{\P_k}=\P_k^{\co}:=
\Proy_{\calc_k^{\t}}\parent{\P_k}$, and
$\Pi^{\fp}\parent{\P_k}=\P^{\fp}_k:=
\Proy_{\calf_k^{\t}}\parent{\P_k}$.

\smallskip

With this notation the homological operator under equivalence, defined as
$$\begin{array}{rcl}\Lequiv_{n+k}:\QH_k^\t\times\Cor\parent{\Opl_k}&\rightarrow&\QH_{n+k}^\t\\
\Lequiv_{n+k}\parent{\P_k,\nu_k}&=&-[\F_n,\P_k]-\nu_k\F_n\end{array}$$
can be written as
$$ \Lequiv_{n+k}:\calc_k^\t\times
\cald_{k}^\t \times \calf_k^\t\times \Cor\parent{\Opl_k}
\longrightarrow
\calc_{n+k}^\t\times\cald_{n+k}^\t\times\calf_{n+k}^\t,
$$
such as
 $$
\Lequiv_{n+k}\parent{\P_{k}^{\co},\P_{k}^{\di},\P_k^{\fp},\nu_k}=
-\parent{\bracket{\F_{n},\P_{k}^{\co}}^{\co},
\bracket{\F_{n},\P_{k}^{\co}+\P_{k}^{\di}}^{\di},\bracket{\F_{n},\P_{k}^{\co}+\P_{k}^{\di}+\P_{k}^{\fp}}^{\fp}+ \nu_k\F_{n}},
 $$
where $\P_k^{\co}=\X_{\gx}$ with $\gx\in\Delta_{k+|\t|}$,
$\P_k^{\di}=\mu_k\FZERO$ with $\mu_k\in\qh_k^\t$,
$\P_k^{\fp}=\lambda_{k-n}\F_{n}$ with $\lambda_{k-n}\in\qh_{k-n}^\t$
and $\nu_k\in\Cor\parent{\Opl_{k}}$.
\begin{lemma}\label{lecorhliecdf}
Let us consider $\lambda_{k-n} \F_{n}\in\calf_k^\t$,
$\eta_k \FZERO\in\cald_k^\t$, $\X_{\gx}\in\calc_k^\t$. Then:
\begin{description}
\item[(a)]
$\bracket{\F_{n},\lambda_{k-n} \F_{n}} = -
\Opl_{k}\parent{\lambda_{k-n}} \F_{n}\in\calf_{n+k}^\t$.
\item[(b)]
$\bracket{\F_{n},\eta_k \FZERO} = -
\Opl_{n+k}\parent{\eta_k}\FZERO+n \eta_k \F_{n}
\in\cald_{n+k}^\t\bigoplus\calf_{n+k}^\t$.
\item[(c)]
$\bracket{\F_{n},\X_{\gx}}^{\co} = -
\X_{\parent{\Opl_{n+k+|\t|}^{\co}\parent{\gx}}}$ where
$\Opl_{n+k+|\t|}^{\co} : \Delta_{k+|\t|} \longrightarrow
\Delta_{n+k+|\t|}$ is defined by
\bea\label{DefOplc}
\Opl_{n+k+|\t|}^{\co}\parent{\gx} = \Proy_{\Delta_{n+k+|\t|}}
\parent{ \nabla\gx\cdot\parent{\F_{n}-\fracp{n+|\t|}{n+k+|\t|}\mu\FZERO}}.
\eea
\end{description}
\end{lemma}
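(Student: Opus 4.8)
The plan is to compute each Lie bracket directly from the definitions and then decompose the result using the splitting $\QH_{n+k}^\t=\calc_{n+k}^\t\bigoplus\cald_{n+k}^\t\bigoplus\calf_{n+k}^\t$. For part \textbf{(a)}, I would use the Leibniz-type identity $[\F_n,\lambda\F_n]=-(\nabla\lambda\cdot\F_n)\F_n+\lambda[\F_n,\F_n]=-(\nabla\lambda\cdot\F_n)\F_n$, and then recognize $\nabla\lambda\cdot\F_n$ as exactly $\Opl_k(\lambda_{k-n})$ by the definition (\ref{DefOpl}) of $\Opl_k$. The degree count $\Opl_k(\lambda_{k-n})\in\qh_k^\t$ together with $\F_n\in\QH_n^\t$ shows the product lies in $\calf_{n+k}^\t$, as claimed.

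For part \textbf{(b)}, the key computational fact is the standard formula for the bracket of a vector field with the Euler field $\FZERO$: for $\P\in\QH_j^\t$ one has $[\P,\FZERO]=-j\,\P$ (this is just the statement that $\FZERO$ generates the quasi-homogeneous grading). Applying this with the Leibniz rule to $[\F_n,\eta_k\FZERO]=-(\nabla\eta_k\cdot\F_n)\FZERO+\eta_k[\F_n,\FZERO]$ and using $[\F_n,\FZERO]=-n\F_n$ gives $[\F_n,\eta_k\FZERO]=-(\nabla\eta_k\cdot\F_n)\FZERO+n\eta_k\F_n$. To finish I must observe that $\nabla\eta_k\cdot\F_n$ is not quite $\Opl_{n+k}(\eta_k)$ but its $\cald$-component after the splitting: writing $\F_n=\X_{\hx}+\mu\FZERO$, the term $\nabla\eta_k\cdot\mu\FZERO$ contributes a multiple of $\FZERO$ which, via (\ref{ecumukr})-type identities, can be reabsorbed, and $\Proy_{\qh}$ of the relevant piece yields precisely the operator $\Opl_{n+k}$ applied to $\eta_k$; the leftover is collected into the $\calf_{n+k}^\t$ slot. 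This is the part requiring care about which projection is being taken.

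For part \textbf{(c)}, I would use that the Hamiltonian assignment $g\mapsto\X_g$ is a Lie algebra (anti)morphism up to the appropriate twist: $[\F_n,\X_g]=[\X_{\hx}+\mu\FZERO,\X_g]$, and $[\X_{\hx},\X_g]=\X_{\{\hx,g\}}$ while $[\mu\FZERO,\X_g]$ produces Hamiltonian and Euler pieces whose Hamiltonian part is $-\X_{\nabla g\cdot\mu\FZERO}$ up to the grading factor $\fracp{n+|\t|}{n+k+|\t|}$ coming from the degrees of $g$ and of $\FZERO$ acting by scaling. Taking $\Pi^{\co}$ kills the $\cald$ and $\calf$ contributions and projects the Hamiltonian generator onto $\Delta_{n+k+|\t|}$, giving exactly the operator $\Opl^{\co}_{n+k+|\t|}$ in (\ref{DefOplc}). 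The main obstacle throughout is bookkeeping: keeping straight the several grading constants ($n+|\t|$, $k+|\t|$, $n+k+|\t|$) that appear when the Euler field is differentiated against a quasi-homogeneous polynomial, and verifying that the projections onto $\calc$, $\cald$, $\calf$ interact correctly with the decomposition $\F_n=\X_{\hx}+\mu\FZERO$ — essentially re-deriving identity (\ref{ecumukr}) in the form needed for each part. None of the steps is deep; the work is in organizing the algebra so the stated closed forms fall out cleanly.
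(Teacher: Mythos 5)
Your overall route --- expand each bracket with the Leibniz identity $\bracket{\mu\F,\G}=\parent{\nabla\mu\cdot\G}\F+\mu\bracket{\F,\G}$ and the Euler--field relation, then read off components in the splitting $\calc_{n+k}^\t\bigoplus\cald_{n+k}^\t\bigoplus\calf_{n+k}^\t$ --- is exactly the paper's. Parts \textbf{(a)} and \textbf{(c)} are described correctly: for \textbf{(c)} the paper likewise writes $\bracket{\F_n,\X_\gx}=\bracket{\X_\hx,\X_\gx}+\bracket{\mu\FZERO,\X_\gx}$, uses the conservative--dissipative splitting (\ref{componentes}) of $\mu\X_\gx$ to collect the Hamiltonian generator $\nabla\gx\cdot\parent{\F_n-\fracp{n+|\t|}{n+k+|\t|}\mu\FZERO}$, and only then projects onto $\Delta_{n+k+|\t|}$.

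Two points in your part \textbf{(b)} need repair. First, the sign: with the bracket convention the paper uses one has $\bracket{\F_k,\FZERO}=+k\F_k$, not $-k\F_k$; as written, your claim $\bracket{\P,\FZERO}=-j\P$ substituted into your own expansion would produce $-n\eta_k\F_n$, contradicting the $+n\eta_k\F_n$ you (correctly) state as the conclusion, so the two lines are mutually inconsistent and one convention must be fixed throughout. Second, and more substantively, your assertion that $\nabla\eta_k\cdot\F_n$ is ``not quite $\Opl_{n+k}(\eta_k)$'' and requires a further projection or reabsorption via (\ref{ecumukr}) is a misreading of the definitions: by (\ref{DefOpl}), $\Opl_{n+k}\parent{\eta_k}$ is literally the scalar $\nabla\eta_k\cdot\F_n$, and $\parent{\nabla\eta_k\cdot\F_n}\FZERO$ already belongs to $\cald_{n+k}^\t$, since that subspace consists of \emph{all} products $\eta\FZERO$ with $\eta\in\qh_{n+k}^\t$. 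The extra manipulation you propose is unnecessary and, if actually carried out, would risk disturbing the coefficient of the $\calf$-component; the formula in \textbf{(b)} follows immediately once the bracket is expanded. With these two corrections your argument is complete and coincides with the paper's proof.
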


\begin{proof}
Items {\bf (a)}, {\bf (b)}, are
consequences of following properties $[\mu\F,\G]=\parent{\nabla \mu\cdot\G}\F+\mu[\F,\G]$, $[\F_k,\FZERO]=k\F_k$,
respectively. From the properties of the Lie bracket and the previous properties we deduce
 \bean
\bracket{\F_{n},\X_{\gx}} & = &
\bracket{\X_{\hx},\X_{\gx}}+\bracket{\mu\FZERO,\X_{\gx}}=\X_{-\nabla\gx\cdot\X_{\hx}}
+
\parent{\nabla\mu\cdot\X_{\gx}}\FZERO+\mu\bracket{\FZERO,\X_{\gx}}
\\
&=&\X_{-\nabla\gx\cdot\X_{\hx}} +
\parent{\nabla\mu\cdot\X_{\gx}}\FZERO-k\mu\X_{\gx}\\
&=&\X_{-\nabla\gx\cdot\X_{\hx}} +
\parent{\nabla\mu\cdot\X_{\gx}}\FZERO-\fracp{k(k+|\t|)}{n+k+|\t|}\X_{\mu\gx}
-\fracp{k}{n+k+|\t|}\parent{\nabla\mu\cdot\X_{\gx}}\FZERO
\\
&=&-\X_{\nabla\gx\cdot\X_{\hx}+\fracp{k(k+|\t|)}{n+k+|\t|}\mu\gx} +
\fracp{n+|\t|}{n+k+|\t|}\parent{\nabla\mu\cdot\X_{\gx}}\FZERO\\
&=&-\X_{\nabla\gx\cdot\parent{\F_{n}-\fracp{n+|\t|}{n+k+|\t|}\mu\FZERO}} +
\fracp{n+|\t|}{n+k+|\t|}\parent{\nabla\mu\cdot\X_{\gx}}\FZERO,
 \eean
which implies item {\bf (c)}.
\end{proof}

Next result provides us an orbital normal form of the vector field $\F$ with first quasi-homogeneous component non-conservative.
\begin{theorem}\label{teoFNlequivFn} Let $\F=\sum_{j\geq n}\F_j$, $\F_j\in\QH_j^\t$. If $\Ker\parent{\Opl_{n+k+|\t|}^{\co}}=\llave{0}$ for all $k\in\mathbb{N}$ then $\F$ is orbitally equivalent to $$\G=\F_n+\sum_{j>n}\G_j, \ \mbox{with} \ \G_j=\X_{g_{j+|\t|}}+\eta_j\FZERO\in\QH_j^\t,$$
 where $g_{j+|\t|}\in\Cor\parent{\Opl^{c}_{n+j+|\t|}}$ and $\eta_j\in\Cor\parent{\Opl_{j}}$, the operators $\Opl^{c}_{n+j+|\t|}$ and $\Opl_j$ are defined in (\ref{DefOplc}) and (\ref{DefOpl}), respectively.
\end{theorem}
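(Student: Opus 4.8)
The plan is to run the standard degree-by-degree normal form procedure under orbital equivalence, organized along the splitting $\QH_k^\t=\calc_k^\t\bigoplus\cald_k^\t\bigoplus\calf_k^\t$ (this uses $\hx\neq0$) and the block structure of $\Lequiv_{n+k}$ coming from Lemma~\ref{lecorhliecdf}. Fix $k\in\N$ and suppose, as induction hypothesis, that after finitely many near-identity orbital transformations $\F$ has been carried to a vector field all of whose terms of degree $<n+k$ already have the stated form and whose term of degree $n+k$ is some $\hat\F_{n+k}\in\QH_{n+k}^\t$. A near-identity orbital transformation generated at degree $k$ by $\P_k\in\QH_k^\t$, composed with a time-rescaling $\nu_k\in\Cor\parent{\Opl_k}$, leaves all terms of degree $<n+k$ unchanged (since $[\F_n,\P_k]\in\QH_{n+k}^\t$ and every other new contribution has degree $>n+k$) and replaces $\hat\F_{n+k}$ by $\hat\F_{n+k}+\Lequiv_{n+k}\parent{\P_k,\nu_k}$. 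Note that $\Opl_{n+k+|\t|}^{\co}$, $\Opl_{n+k}$ and $\Opl_k$ depend only on $\F_n$ (through $\hx$ and $\mu$), so they are the same at every step of the recursion. Hence it is enough to prove
\[
\QH_{n+k}^\t=\Range\parent{\Lequiv_{n+k}}+\mathcal{N}_{n+k},\qquad \mathcal{N}_{n+k}:=\llave{\X_{g}+\eta\FZERO:\ g\in\Cor\parent{\Opl_{n+k+|\t|}^{\co}},\ \eta\in\Cor\parent{\Opl_{n+k}}}.
\]

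I would establish this componentwise, using that $\Lequiv_{n+k}$ is block-triangular for $\calc\times\cald\times\calf$. Write $\hat\F_{n+k}=\X_{\hat g}+\hat\eta\FZERO+\hat\lambda\F_n$ according to the decomposition $\QH_{n+k}^\t=\calc_{n+k}^\t\bigoplus\cald_{n+k}^\t\bigoplus\calf_{n+k}^\t$. The $\calc$-part of $\Lequiv_{n+k}\parent{\P_k^{\co},\P_k^{\di},\P_k^{\fp},\nu_k}$ equals $\X_{\Opl_{n+k+|\t|}^{\co}\parent{g}}$ for $\P_k^{\co}=\X_{g}$ and is independent of the other arguments (Lemma~\ref{lecorhliecdf}(c)); since $\Delta_{n+k+|\t|}=\Range\parent{\Opl_{n+k+|\t|}^{\co}}\bigoplus\Cor\parent{\Opl_{n+k+|\t|}^{\co}}$, one picks $g\in\Delta_{k+|\t|}$ cancelling the $\Range$-component of $\hat g$, leaving a $\calc$-remainder in $\X_{\Cor(\Opl_{n+k+|\t|}^{\co})}$; the hypothesis $\Ker\parent{\Opl_{n+k+|\t|}^{\co}}=\llave{0}$ makes this $g$ unique. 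With $\P_k^{\co}=\X_{g}$ fixed, the $\cald$-part of the image is a quantity determined by $g$ plus the summand $\Opl_{n+k}\parent{\eta_k}\FZERO$ produced by $\P_k^{\di}=\eta_k\FZERO$ (Lemma~\ref{lecorhliecdf}(b)); as $\qh_{n+k}^\t=\Range\parent{\Opl_{n+k}}\bigoplus\Cor\parent{\Opl_{n+k}}$, a suitable $\eta_k$ absorbs the $\Range$-component and leaves a $\cald$-remainder in $\Cor\parent{\Opl_{n+k}}\FZERO$. Finally, with $\P_k^{\co}$ and $\P_k^{\di}$ fixed, the $\calf$-part of the image is a quantity determined by $g$ and $\eta_k$ plus $\parent{\Opl_k\parent{\lambda_{k-n}}+\nu_k}\F_n$ (Lemma~\ref{lecorhliecdf}(a) and the $\nu_k\F_n$ summand of $\Lequiv_{n+k}$); since $\qh_k^\t=\Range\parent{\Opl_k}\bigoplus\Cor\parent{\Opl_k}$, the pair $\parent{\lambda_{k-n},\nu_k}$ can be chosen so that the total $\calf$-part vanishes. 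Therefore $\hat\F_{n+k}+\Lequiv_{n+k}\parent{\P_k,\nu_k}\in\mathcal{N}_{n+k}$, which closes the inductive step; taking the formal limit of the composed transformations — legitimate here because formal integrability is equivalent to the analytic one for these fields — produces the claimed orbital equivalence with $\G$.

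The crux, and the main obstacle, is exactly the componentwise verification: one must check that the three ``diagonal blocks'' of $\Lequiv_{n+k}$, namely $\Opl_{n+k+|\t|}^{\co}$ on $\calc_k^\t$, $\Opl_{n+k}$ on $\cald_k^\t$, and the combined map $\parent{\lambda_{k-n},\nu_k}\mapsto\parent{\Opl_k\parent{\lambda_{k-n}}+\nu_k}\F_n$ onto $\calf_{n+k}^\t$, together with the complements $\Cor\parent{\Opl_{n+k+|\t|}^{\co}}$ and $\Cor\parent{\Opl_{n+k}}$, jointly span $\QH_{n+k}^\t$. The surjectivity of the last block onto $\calf_{n+k}^\t$ is the delicate point and is precisely why the time-rescaling freedom is restricted to $\Cor\parent{\Opl_k}$ rather than to all of $\qh_k^\t$: this is what lets the $\calf$-part be eliminated altogether, so that no $\calf_j^\t$-term survives in $\G$. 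The role of the hypothesis $\Ker\parent{\Opl_{n+k+|\t|}^{\co}}=\llave{0}$ is to turn the above sum into a genuine direct sum $\QH_{n+k}^\t=\Range\parent{\Lequiv_{n+k}}\bigoplus\mathcal{N}_{n+k}$, which makes the generator $\X_{g}$ uniquely determined at each degree and the normal form $\G$ canonical.
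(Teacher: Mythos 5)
Your proposal is correct and follows essentially the same route as the paper: both rest on the splitting $\QH_{n+k}^\t=\calc_{n+k}^\t\bigoplus\cald_{n+k}^\t\bigoplus\calf_{n+k}^\t$, the block lower-triangular form of $\Lequiv_{n+k}$ from Lemma \ref{lecorhliecdf}, and the hypothesis $\Ker\parent{\Opl_{n+k+|\t|}^{\co}}=\llave{0}$ to make $\X_{\Cor\parent{\Opl_{n+k+|\t|}^{\co}}}\bigoplus\Cor\parent{\Opl_{n+k}}\FZERO$ a genuine complement of the range. The paper merely compresses your componentwise elimination into a block-matrix scheme, so the two arguments coincide in substance.
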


\begin{proof}
It is sufficient to show that
\bean
\Cor\parent{\Lequiv_{n+k}}&=&
\X_{\Cor\parent{\Opl_{n+k+|\t|}^{\co}}}
\bigoplus\Cor\parent{\Opl_{n+k}}\FZERO,
 \eean
is a complementary subspace to the range of $\Lequiv_{n+k}$.

From Lemma \ref{lecorhliecdf}, we get
$
\Lequiv_{n+k}\parent{\P_{k}^{\co},\P_{k}^{\di},\P_k^{\fp},\nu_k}=$
\bean
\parent{\X_{\parent{\Opl_{n+k+|\t|}^{\co}\parent{\gx}}},
-\bracket{\F_{n},\X_{\gx}}^{\di} +
\Opl_{n+k}\parent{\mu_k}\FZERO,-\bracket{\F_{n},\X_{\gx}}^{\fp}-\mu_k\F_{n}
+\Opl_{k}\parent{\lambda_{k-n}}\F_{n}- \nu_k\F_{n}}.
 \eean

By using $\calf_{n+k}^\t = \Range\parent{\Opl_{k}}\F_{n} \bigoplus
\Cor\parent{\Opl_{k}}\F_{n}$, and $\qh_{k}^\t =
\Range\parent{\Opl_{k}} \bigoplus \Cor\parent{\Opl_{k}}$, we
obtain the following scheme for $\Lequiv_{n+k}$:
% \bean\label{matrizLequiv1simp}
\renewcommand{\arraystretch}{1.5}
 $$\begin{array}{|c|c|c|c|c|c|}
 %\rowcolor[gray]{.9}
 \hline
\cellcolor[gray]{.9}\calc_{k}^\t & \cellcolor[gray]{.9}\cald_{k}^\t & \cellcolor[gray]{.9}\calf_k^\t &
\cellcolor[gray]{.9}\Range\parent{\Opl_{k}}& \cellcolor[gray]{.9}\Cor\parent{\Opl_{k}} &
 \  \\
\hline
 \X_{\parent{\Opl_{n+k+|\t|}^{\co}\parent{\gx}}} &
 0 &
 0 &
 0 &
 0 &
 \cellcolor[gray]{.9}\calc_{n+k}^\t \\
 \hline
 \bullet &
 \Opl_{n+k}\parent{\mu_k}\FZERO &
 0 &
 0 &
 0 &
 \cellcolor[gray]{.9}\cald_{n+k}^\t \\
 \hline
 \multirow{2}{*}{$\bullet$} &
 \multirow{2}{*}{$\bullet$} &
  { \Opl_{k}\parent{\lambda_{k-n}}\F_{n}} &
  {-\Opl_{k}\parent{\rho_{k-n}}\F_{n}} &
  {0} &
  \cellcolor[gray]{.9}{\Range\parent{\Opl_{k}}\F_{n}} \\
  \cline{3-6}
   &
   &
 {0} &
  {0} &
  {-\nu_{k}\F_{n}} &
  \cellcolor[gray]{.9}{\Cor\parent{\Opl_{k}}\F_{n}}  \\
 \hline
\end{array}$$
From hypothesis $\Ker\parent{\Opl_{n+k+|\t|}^{\co}}=\llave{0}$, we can deduce that the upper left block diagonal of
the above matrix has maximum range. The result follows from the structure of the
above matrix.
\end{proof}

\begin{remark} The case {\bf A)} of Proposition \ref{propreFN}, i.e., the vector field $\F=(y,bx^ny)^T+\cdots$ does not satisfy the hypothesis of Theorem \ref{teoFNlequivFn}. More specifically , this vector field does not verify the condition $\Ker\parent{\Opl_{n+k+|\t|}^{\co}}=\llave{0}$.
\end{remark}

\subsection{Normal form for a perturbation of a quasi-homogeneous non-hamiltonian nilpotent vector field}

In order to determine an orbital normal form for a nilpotent vector field whose first quasi-homogeneous component is non-hamiltonian we need the following result.

\begin{lemma}\label{lebasepkt}
Consider $\F_n=\X_{\hx}+\mu\FZERO\in\QH_n^\t$, $\t=(1,n+1)$, $\FZERO=(x,(n+1)y)\in\QH_0^\t$, $\hx=-\fracp{1}{2}(y^2-x^{2n+2})\in\qh_{2(n+1)}^\t$, $\mu=dx^n\in\qh_n^\t$, with $d\in\mathbb{R}\setminus\llave{0}$. If $k\in\mathbb{N}$ then $\Ker\parent{\Opl_{k+2(n+1)}^{\co}}=\llave{0}$ if, and only if, $|d|\neq 1+\fracp{2(n+1)}{k}$. Moreover in this case $\Cor\parent{\Opl_{k+2(n+1)}^{\co}}=\llave{0}$.
\end{lemma}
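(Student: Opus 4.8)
The plan is to analyze the operator $\Opl^{\co}_{k+2(n+1)}:\Delta_{k+2(n+1)}\to\Delta_{k+2(n+1)+n}$ explicitly. From Lemma \ref{leqhsumadirecta}'s setup (with $t_1=1$, $t_2=n+1$, so $|\t|=n+2$ and $n+|\t|=2(n+1)$), the complementary subspace $\Delta_{k+|\t|}$ to $\hx\qh_{k-n}^\t$ inside $\qh_{k+|\t|}^\t$ is one-dimensional: since every $g\in\qh_{k+|\t|}^\t$ can be written as $g(x,y)=\sum_j a_j x^{k+|\t|-(n+1)j}y^j$ and $\hx=-\tfrac12(y^2-x^{2n+2})$ allows us to reduce the powers of $y$ modulo $2$, we may take $\Delta_{k+|\t|}=\langle x^{k+|\t|}, x^{k+|\t|-(n+1)}y\rangle$ when $k+|\t|\geq n+1$, i.e. generically two-dimensional. (One must be careful about the exact dimension and pick the right monomial basis; this bookkeeping is the first routine step.) First I would write $g=\alpha x^{k+|\t|}+\beta x^{k+1}y$ (using $k+|\t|-(n+1)=k+1$) and compute $\nabla g\cdot(\F_n-\tfrac{n+|\t|}{k+2(n+1)}\mu\FZERO)$ directly, where $\F_n=(y+dx^{n+1},(n+1)x^{2n+1}+(n+1)dx^ny)^T$ and $\mu\FZERO=dx^n(x,(n+1)y)^T$.

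Next I would carry out the projection onto $\Delta_{k+2(n+1)+n}$, that is, reduce the resulting quasi-homogeneous polynomial of degree $k+2(n+1)+n$ modulo $\hx\,\qh_{k+2(n+1)}^\t$, keeping only the $x^{\bullet}$ and $x^{\bullet}y$ terms. The key point is that $\Opl^{\co}$ then becomes a $2\times 2$ matrix in the basis $\{x^{k+|\t|}, x^{k+1}y\}$ (of the source) mapped to $\{x^{k+n+|\t|}, x^{k+n+1}y\}$ (of the target), whose entries are affine-linear in $d$. I would compute this matrix and its determinant, which will be a polynomial in $d$ and $k$; the condition $\Ker(\Opl^{\co}_{k+2(n+1)})=\{0\}$ is exactly the non-vanishing of this determinant. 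The expected outcome, matching the statement, is that the determinant factors so that it vanishes precisely when $d=\pm(1+\tfrac{2(n+1)}{k})$, i.e. $|d|=1+\tfrac{2(n+1)}{k}$; this is consistent with the resonance condition appearing in Lemma \ref{leqrangprang} (with $k$ there replaced by $k+n+1$, since the degrees are shifted by the change between Hamiltonian degree and vector-field degree). Moreover, since source and target have the same dimension, $\Opl^{\co}$ is injective iff it is surjective, which immediately gives $\Cor(\Opl^{\co}_{k+2(n+1)})=\{0\}$ in the non-resonant case.

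The main obstacle I anticipate is purely computational rather than conceptual: correctly identifying the complementary subspace $\Delta_{\bullet}$ (its dimension can drop for small degrees, and the natural monomial representatives must be chosen consistently in source and target), and then performing the reduction modulo $\hx\qh^\t$ without sign or coefficient errors, so that the determinant genuinely factors as $(d-1-\tfrac{2(n+1)}{k})(d+1+\tfrac{2(n+1)}{k})$ up to a nonzero constant. A secondary subtlety is handling the low-degree exceptional cases (e.g. when $k$ is small enough that $\Delta_{k+|\t|}$ is one-dimensional or when $y^2$-reduction behaves differently); there one checks the claim by a direct, shorter computation. Once the $2\times 2$ matrix is in hand, the conclusion is immediate: $\Opl^{\co}_{k+2(n+1)}$ is an isomorphism for $|d|\neq 1+\tfrac{2(n+1)}{k}$, giving both $\Ker=\{0\}$ and $\Cor=\{0\}$, and it has nontrivial kernel (hence fails to be injective) exactly on the resonant values.
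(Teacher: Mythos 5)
Your proposal is correct and follows essentially the same route as the paper's proof: there one writes $p=\alpha_0x^{k+n+2}+\alpha_1x^{k+1}(y-x^{n+1})$, computes $\nabla p\cdot\bigl(\F_n-\frac{2(n+1)}{k+2(n+1)}dx^n\FZERO\bigr)$, reduces via $y^2=x^{2n+2}-2\hx$, and obtains a triangular $2\times 2$ matrix with diagonal entries $d+1+\frac{2(n+1)}{k}$ and $d-1-\frac{2(n+1)}{k}$ — exactly the determinant factorization you predict — concluding $\Cor\parent{\Opl^{\co}_{k+2(n+1)}}=\llave{0}$ from equality of dimensions, just as you do. The only slip is in your opening line, where domain and codomain are shifted by $n$: the operator $\Opl^{\co}_{k+2(n+1)}$ maps $\Delta_{k+n+2}$ to $\Delta_{k+2(n+1)}$, as your subsequent choice of bases correctly reflects.
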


\begin{proof}
Let us take $p\in\Delta_{k+n+2}$. Then, we can write \bean p(x,y) =
\alpha_0x^{k+n+2}+\alpha_1 x^{k+1}(y-x^{n+1})=(\alpha_0-\alpha_1)x^{k+n+2}+\alpha_1x^{k+1}y.\eean
Consider $\G_n=\F_n-\fracp{2(n+1)}{k+2(n+1)}dx^n\FZERO $, then
 \bean
\nabla p(x,y)\cdot\G_n & = & (k+n+2)(\alpha_0-\alpha_1) x^{k+n+1}\nabla x\cdot\G_n \\
&& +(k+1)\alpha_1x^{k}y\nabla x\cdot\G_n+\alpha_1x^{k+1}\nabla y\cdot\G_n. \eean
Taking into account that
\bean
\nabla x\cdot\G_n&=&y+\fracp{k}{k+2(n+1)}dx^{n+1},\\
\nabla y\cdot\G_n&=&(n+1)x^{2n+1}+\fracp{(n+1)k}{k+2(n+1)}x^ny,\\
y^2&=&x^{2n+2}-2\hx,
\eean
we have that
\bean
\nabla p(x,y)\cdot\G_n & = & \fracp{k(k+n+2)}{k+2(n+1)}\left[d+1+\fracp{2(n+1)}{k}\right]\alpha_0 x^{k+2(n+1)}\\ &&+\fracp{k(k+n+2)}{k+2(n+1)}\left[(1+\fracp{2(n+1)}{k})\alpha_0+(d-1-\fracp{2(n+1)}{k})\alpha_1\right]x^{k+n+1}(y-x^{n+1})\\
&&-2(k+1)\alpha_1 x^k\hx. \eean
In this way
\bean
\Opl_{k+2(n+1)}^{\co}\parent{p}&=&\fracp{k(k+n+2)}{k+2(n+1)}x^{k+n+1}\left[\left[d+1+\fracp{2(n+1)}{k}\right]\alpha_0 x^{n+1} \right.\\
&& \left. +\left[(1+\fracp{2(n+1)}{k})\alpha_0+(d-1-\fracp{2(n+1)}{k})\alpha_1\right](y-x^{n+1})\right],\eean
Hence choosing the bases $\Delta_{k+n+2}=<x^{k+n+2},x^{k+1}(y-x^{n+1})>$ and $\Delta_{k+2(n+1)}=<x^{k+2(n+1)},x^{k+n+1}(y-x^{n+1})>$ we obtain that the matrix of the operator $\Opl_{k+2(n+1)}^{\co}$ is similar to
$$\left(\begin{array}{cc}d+1+\fracp{2(n+1)}{k}&0\\ 1+\fracp{2(n+1)}{k}&d-1-\fracp{2(n+1)}{k}\end{array}\right)$$
from the structure of the above matrix the proof follows.
\end{proof}

As consequence of the Theorem \ref{teoFNlequivFn} we obtain the following orbital normal form of nilpotent vector with first quasi-homogeneous component non-conservative in the generic case.

\begin{theorem}\label{teFNnilpdisip} Consider $\F=\sum_{j\geq n}\F_j$, $\F_j\in\QH_j^\t$ with $\t=(1,n+1)$ and $\F_n=(y+dx^{n+1},(n+1)x^{2n+1}+(n+1)dx^ny)^T$ with $d\in\mathbb{R}\setminus\llave{0}$. If $|d|\neq 1+ \fracp{2(n+1)}{k}$ for all $k\in\mathbb{N}$ then $\F$ is orbitally equivalent to
$$\F_n+\sum_{k>n}\mu_k\FZERO,$$
where $\mu_k\in\Cor\parent{\Opl_{k}}$.
\end{theorem}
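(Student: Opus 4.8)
The plan is to apply Theorem \ref{teoFNlequivFn} directly, so the work reduces to verifying its hypothesis for the vector field at hand, together with identifying the form of the resulting complementary subspaces. First I would observe that for $\F_n=(y+dx^{n+1},(n+1)x^{2n+1}+(n+1)dx^ny)^T$ the conservative-dissipative splitting (\ref{componentes}) gives exactly $\hx=-\fracp{1}{2}(y^2-x^{2n+2})\in\qh_{2(n+1)}^\t$ and $\mu=dx^n\in\qh_n^\t$, which is the precise setting of Lemma \ref{lebasepkt} (here $|\t|=n+2$, so $k+2(n+1)=k+|\t|+n$ matches the index $n+k+|\t|$ appearing in $\Opl^{\co}_{n+k+|\t|}$).

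Next I would invoke Lemma \ref{lebasepkt}: for each $k\in\mathbb{N}$, the condition $|d|\neq 1+\fracp{2(n+1)}{k}$ is equivalent to $\Ker\parent{\Opl_{k+2(n+1)}^{\co}}=\llave{0}$, and moreover in that case $\Cor\parent{\Opl_{k+2(n+1)}^{\co}}=\llave{0}$. Under the standing hypothesis $|d|\neq 1+\fracp{2(n+1)}{k}$ for all $k\in\mathbb{N}$, the kernel condition holds for every $k$, so the hypothesis of Theorem \ref{teoFNlequivFn} is satisfied. Applying that theorem, $\F$ is orbitally equivalent to $\G=\F_n+\sum_{j>n}\G_j$ with $\G_j=\X_{g_{j+|\t|}}+\eta_j\FZERO$, $g_{j+|\t|}\in\Cor\parent{\Opl^{\co}_{n+j+|\t|}}$ and $\eta_j\in\Cor\parent{\Opl_j}$. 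But the second conclusion of Lemma \ref{lebasepkt} forces $\Cor\parent{\Opl^{\co}_{n+j+|\t|}}=\llave{0}$ for every $j>n$, hence each $g_{j+|\t|}=0$ and the conservative part of every higher-order term vanishes. Thus $\G_j=\eta_j\FZERO$ with $\eta_j\in\Cor\parent{\Opl_j}$, i.e. $\F$ is orbitally equivalent to $\F_n+\sum_{k>n}\mu_k\FZERO$ with $\mu_k\in\Cor\parent{\Opl_k}$, which is the claim (after renaming $j$ as $k$ and $\eta_j$ as $\mu_k$).

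There is essentially no hard part here beyond bookkeeping: the theorem is a corollary of Theorem \ref{teoFNlequivFn} and Lemma \ref{lebasepkt}, as the statement itself signals. The only point requiring a little care is the index matching between the operator $\Opl^{\co}_{n+k+|\t|}$ of Theorem \ref{teoFNlequivFn} and the operator $\Opl^{\co}_{k+2(n+1)}$ of Lemma \ref{lebasepkt}; with $|\t|=n+2$ one has $n+k+|\t|=k+2(n+1)$ after the shift $k\mapsto k$, so the parameter appearing in Lemma \ref{lebasepkt} is exactly the $k$ ranging over $\mathbb{N}$ in the hypothesis, and the two conditions $|d|\neq 1+\fracp{2(n+1)}{k}$ are literally the same. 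One should also note that since $\hx\neq 0$ here, the decomposition lemma preceding the definition of $\Opl^{\co}$ applies, so all the projectors $\Pi^{\co},\Pi^{\di},\Pi^{\fp}$ used in Theorem \ref{teoFNlequivFn} are well defined for this $\F_n$.
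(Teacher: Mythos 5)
Your proof is correct and follows exactly the route the paper takes: verify via Lemma \ref{lebasepkt} that $\Ker\parent{\Opl^{\co}_{k+2(n+1)}}=\Cor\parent{\Opl^{\co}_{k+2(n+1)}}=\llave{0}$ under the hypothesis on $d$, then apply Theorem \ref{teoFNlequivFn} and use the triviality of the co-range to kill the conservative terms. The index bookkeeping $n+k+|\t|=k+2(n+1)$ and the identification of $\hx$ and $\mu$ are both right, so there is nothing to add.
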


\begin{proof}
If $|d|\neq 1+ \fracp{2(n+1)}{k}$ for all $k\in\mathbb{N}$, by Lemmma \ref{lebasepkt} we have that $\Cor\parent{\Opl_{k+2(n+1)}^{\co}}=\Ker\parent{\Opl_{k+2(n+1)}^{\co}}=\llave{0}$. Applying now Theorem \ref{teoFNlequivFn} the result follows.
\end{proof}

\subsection{Orbital normal form for system (\ref{SistObjestu1}).}

In order to obtain an orbital normal form for system (\ref{SistObjestu1}), it is necessary to calculate $\Cor\parent{\Opl_k}$ with $k>n$, for this we need the following two technical lemmas that are referred to polynomial quasi-homogeneos vector fields.

\begin{lemma}\label{irred1}
Consider $\F_{n} = \X_{\hx} +  \mu\FZERO \in \QH_{n}^{\t}$
irreducible and $f \in \mathbb{C}[x,y]$ an invariant curve of  $\F_{n}$, irreducible. If $\ell_{n+k}(p) \in \left<f\right>$
then $p \in \left<f\right>$.
\end{lemma}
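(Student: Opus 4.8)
\textbf{Proof plan for Lemma \ref{irred1}.}

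The plan is to exploit the relation between the operator $\ell_{n+k}$ and the flow along invariant curves of $\F_n$, together with the fact that an irreducible invariant curve defines a reduced divisor. Recall that $\ell_{n+k}(p)=\nabla p\cdot\F_n$ for $p\in\qh_k^\t$. Since $f$ is an invariant curve of $\F_n$ with some cofactor $K\in\qh_n^\t$, i.e.\ $\nabla f\cdot\F_n = Kf$, the key computational identity is the Leibniz-type rule: writing $p = f\,q + s$ with $s$ the remainder of $p$ upon division by $f$ (more precisely, $s$ chosen so that $f\nmid s$ unless $s\equiv0$), one gets $\ell_{n+k}(p) = \nabla(fq+s)\cdot\F_n = (Kf)q + f(\nabla q\cdot\F_n) + \nabla s\cdot\F_n$, so $\ell_{n+k}(p)\in\langle f\rangle$ forces $\nabla s\cdot\F_n\in\langle f\rangle$. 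Hence it suffices to prove the statement under the extra assumption that $f\nmid p$, and show this leads to $p\equiv 0$, i.e.\ that $f\nmid p$ and $p\neq0$ together imply $f\nmid \nabla p\cdot\F_n$.

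First I would reduce to a local/geometric statement along the curve $\{f=0\}$. Because $f$ is irreducible in $\mathbb{C}[x,y]$, the quotient ring $\mathbb{C}[x,y]/\langle f\rangle$ embeds in its field of fractions and, away from the finitely many singular points of $\{f=0\}$, the curve is smooth; pick a smooth point $P_0$ on $\{f=0\}$ different from the origin (it exists since $f$ is not a unit and the origin is isolated for the relevant vector fields — or, more robustly, work at the generic point of the curve). In local coordinates $(u,v)$ near $P_0$ with $f=v$, invariance means $\F_n$ is tangent to $\{v=0\}$, so $\F_n = a(u,v)\partial_u + v\,b(u,v)\partial_v$ with $a(u,0)\not\equiv0$ (this uses that $\F_n$ does not vanish identically on the curve, which follows from irreducibility of $\F_n$ and Lemma \ref{lemm}: were $\F_n$ to vanish on $\{f=0\}$, then $f$ would divide both components, contradicting irreducibility of $\F_n$). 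Then $f\mid p$ is equivalent to $v\mid p$, and $\nabla p\cdot\F_n = a\,\partial_u p + v\,b\,\partial_v p$; if $v\nmid p$, write $p(u,v)=p_0(u)+v\,p_1(u,v)$ with $p_0\not\equiv0$, and $\nabla p\cdot\F_n \equiv a(u,0)\,p_0'(u) \pmod v$. So $f\mid \nabla p\cdot\F_n$ would force $a(u,0)p_0'(u)\equiv0$, hence $p_0'\equiv0$, i.e.\ $p_0$ is a nonzero constant. But a nonzero constant cannot be the restriction to $\{f=0\}$ of a quasi-homogeneous polynomial of positive weighted degree unless that restriction is literally constant, which is impossible for $p\in\qh_k^\t$ with $k>0$ nonvanishing on the curve through the origin — more directly, $p_0\equiv c\neq0$ contradicts $p(\zero)=0$ since $\zero\in\{f=0\}$. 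This pins $p_0\equiv0$, i.e.\ $v\mid p$, contradiction; therefore $p\in\langle f\rangle$.

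The main obstacle I anticipate is handling the places where $\{f=0\}$ is singular or passes through the origin, so that the clean local normal form $f=v$ with $a(u,0)\not\equiv0$ is not literally available there; the cofactor $K$ may also vanish along the curve and one must be careful that ``$f$ divides $\nabla p\cdot\F_n$'' is read as an honest divisibility in $\mathbb{C}[x,y]$. The cleanest fix is to avoid points altogether and argue at the level of $\mathbb{C}[x,y]/\langle f\rangle$: the derivation $D(\bar p):=\overline{\nabla p\cdot\F_n}$ is a well-defined $\mathbb{C}$-linear derivation of this integral domain (well-definedness is exactly the invariance $\nabla f\cdot\F_n\in\langle f\rangle$), and the claim becomes ``$D(\bar p)=0\Rightarrow\bar p=0$'', i.e.\ the ring of $D$-constants in $\mathbb{C}[x,y]/\langle f\rangle$ is $\mathbb{C}$ together with the observation that the only constant in the image of $\qh_k^\t$ is $0$. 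That $D$ has no nonconstant constants follows because $D$ is nonzero (as $\F_n$ is not tangent... sorry, not identically vanishing on the curve) and $\mathbb{C}[x,y]/\langle f\rangle$ has transcendence degree one over $\mathbb{C}$, so a nonzero derivation has constant field algebraic over $\mathbb{C}$, hence equal to $\mathbb{C}$ since $\mathbb{C}$ is algebraically closed; finally a quasi-homogeneous $p$ of positive degree restricting to a constant must restrict to $0$ because $\zero$ lies on the curve and $p(\zero)=0$. Packaging these standard facts about derivations of one-dimensional domains is the only real work; the rest is the Leibniz reduction above.
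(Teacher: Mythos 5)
Your proof is correct, but it reaches the conclusion by a genuinely different route from the paper's. The paper also reduces everything to showing that $p$ vanishes identically on the curve $\{f=0\}$ and then invokes Hilbert's Nullstellensatz together with the primality of $\langle f\rangle$; however, it establishes the vanishing dynamically, splitting into two cases: if $\ell_{n+k}(p)=0$ then $p$ is a first integral of $\F_n$ and so vanishes on the invariant curve, while if $\ell_{n+k}(p)=f\nu\neq 0$ it integrates $\frac{d}{ds}\,p(\gamma(s))=f(\gamma(s))\,\nu(\gamma(s))\equiv 0$ along a trajectory $\gamma$ parametrizing $\{f=0\}$ with $\gamma(s)\to\zero$, using $p(\zero)=0$. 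You instead argue purely algebraically in $\mathbb{C}[x,y]/\langle f\rangle$: invariance makes $D(\bar p):=\overline{\nabla p\cdot\F_n}$ a well-defined derivation; irreducibility of $\F_n$ guarantees $D\neq 0$ (otherwise $f$ would divide both components); a nonzero derivation of a domain of transcendence degree one over $\mathbb{C}$ has constant field $\mathbb{C}$; and $\bar p\in\mathbb{C}$ forces $\bar p=0$ since $f(\zero)=p(\zero)=0$. Your version needs no case split, no parametrizing orbit with a prescribed limit at the origin and no improper integral, and it isolates exactly where the irreducibility of $\F_n$ enters; the paper's version is more elementary (calculus along orbits plus the Nullstellensatz) and stays closer to the dynamical viewpoint used throughout the section. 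The gaps you flag in your first, local-coordinates sketch (singular points of the curve, connectedness of its smooth locus) are real, but your differential-algebra packaging closes them; the opening reduction via ``division of $p$ by $f$'' is not canonical in $\mathbb{C}[x,y]$, though it is also harmless and in fact unnecessary once you pass to the quotient ring.
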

\begin{proof}
If $\Opl_{n+k}\parent{p}= 0$ then $p$ is a first integral of
$\dot{\x}=\F_{n}$. A first integral of $\F_{n}$ vanishes on any invariant curve of it, i.e., $p(\x)=0$ when
$f(\x)=0$. Therefore, by Hilbert`s Nullstellensatz $p\in rad\left<f\right>$.
Since $\left<f\right>$ is a prime ideal, then  $\left<f\right>=rad\left<f\right>$, in consequence
$p \in \left<f\right>$.

If $\Opl_{n+k}\parent{p}\neq 0$, let
$\nu\in \mathbb{C}[x,y]\setminus\llave{0}$ such that
$f\nu=\Opl_{r+k}\parent{p}$. Consider $\gamma(t)$, real or
complex, a solution curve of $\dot{\x}=\F_{n}(\x)$ which is a
parametrization of $f(\x)=0$. We assume that $\lim_{t\to
-\infty}\gamma(t)=0$, (the other case $\lim_{t\to
+\infty}\gamma(t)=0$ is proved in a similar way). Taking into account that $p(\zero)=0$
then
\bean p(\gamma(t))&=&p(\gamma(t))-p(\zero)=\int_{-\infty}^t \fracp{d p(\gamma(s))ds}{ds}=\int_{-\infty}^t \nabla_\x p \cdot\F_r(\gamma(s))ds \\
&=&\int_{-\infty}^t
\ell_{n+k}(p)(\gamma(s))ds=\int_{-\infty}^t f (\gamma(s))\nu(\gamma(s))ds=0\eean
 Recalling that $f(\x)=0$ is the union of orbits, we have that $p(\x)=0$ when $f(\x)=0$. Therefore, by Hilbert`s Nullstellensatz $p\in rad\left<f\right>$. Since $\left<f\right>$ is a prime ideal, then  $\left<f\right>=rad\left<f\right>$, in consequence
$p \in \left<f\right>$.
\end{proof}
\begin{remark} The hypothesis of the irreducibility of $\F_{n}$ is fundamental. For instance, if we consider $\F_{n}:=(y,bx^ny)^T\in\QH_n^{(1,n+1)}$, which is reducible with factor of reducibility $y$, we have that $\nabla x\cdot\F_{n}=y$ and nevertheless $x\notin\langle y \rangle$.
\end{remark}

\begin{lemma}\label{irredfm}
Consider $p\in\qh_k^\t$, $\F_{n} = \X_{\hx} +  \mu\FZERO \in \QH_{n}^{\t}$
irreducible and $f \in \qh_s^\t$ an irreducible
invariant curve of  $\F_{n}$ with $K_{n}$ its cofactor. If $\ell_{n+k}(p) \in \left<f^m\right>$ and  $\F_{n}+\fracp{jK_{n}}{k-js}\FZERO$ is irreducible for $j=1,\cdots,m-1$
then $p \in \left<f^m\right>$.
\end{lemma}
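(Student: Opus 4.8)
The plan is to proceed by induction on $m$, using Lemma \ref{irred1} as the base case and peeling off one factor of $f$ at each step. The base case $m=1$ is exactly Lemma \ref{irred1}: if $\ell_{n+k}(p)\in\langle f\rangle$ then $p\in\langle f\rangle$. For the inductive step, assume the statement holds for $m-1$ and suppose $\ell_{n+k}(p)\in\langle f^m\rangle$. Since $\langle f^m\rangle\subset\langle f\rangle$, the base case already gives $p\in\langle f\rangle$, so we may write $p=fp_1$ with $p_1\in\qh_{k-s}^\t$.

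The heart of the argument is to compute $\ell_{n+k}(fp_1)$ and extract a clean expression of the form ($f$ times something involving $\ell$ applied to $p_1$). Using the Leibniz rule for $\nabla(\cdot)\cdot\F_n$ together with the invariant-curve identity $\nabla f\cdot\F_n = K_n f$, one finds
\[
\ell_{n+k}(fp_1) = \nabla(fp_1)\cdot\F_n = (\nabla f\cdot\F_n)p_1 + f(\nabla p_1\cdot\F_n) = f\bigl(K_n p_1 + \ell_{n+k-s}(p_1)\bigr).
\]
Now $\ell_{n+k}(fp_1)\in\langle f^m\rangle$ means $f(K_n p_1 + \ell_{n+k-s}(p_1))\in\langle f^m\rangle$; since $\langle f\rangle$ is prime and $f$ is irreducible, we may cancel one factor of $f$ to conclude $K_n p_1 + \ell_{n+k-s}(p_1)\in\langle f^{m-1}\rangle$. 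The next task is to recognize the operator $q\mapsto \ell_{n+k-s}(q) + K_n q = \nabla q\cdot\F_n + K_n q$ as (up to a scalar normalization in the dissipative direction) the operator $\ell$ associated to the \emph{shifted} vector field $\F_n + \frac{K_n}{k-s}\FZERO$; more precisely, writing $q\in\qh_{k-s}^\t$ one has $\nabla q\cdot\FZERO = (k-s)q$, so $K_n q + \nabla q\cdot\F_n = \nabla q\cdot\bigl(\F_n + \tfrac{K_n}{k-s}\FZERO\bigr)$. Thus the condition becomes $\tilde\ell_{n+k-s}(p_1)\in\langle f^{m-1}\rangle$, where $\tilde\ell$ is the $\ell$-type operator built from $\F_n + \tfrac{K_n}{k-s}\FZERO$. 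By hypothesis this shifted field (the $j=1$ instance) is irreducible, and $f$ remains an invariant curve of it (any factor of $f\cdot(\text{anything})$ in the cofactor sense is preserved since adding a multiple of $\FZERO$ only changes the cofactor by $sK_n/(k-s)$). Hence we are in a position to apply the inductive hypothesis to $p_1$, with $\F_n$ replaced by $\F_n + \tfrac{K_n}{k-s}\FZERO$ and exponent $m-1$; the remaining irreducibility assumptions $\F_n + \tfrac{jK_n}{k-js}\FZERO$ irreducible for $j=2,\dots,m-1$ are precisely the ones needed for the recursion to continue (re-indexing $j\mapsto j-1$ and noting $(k-s)-(j-1)s = k-js$). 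We obtain $p_1\in\langle f^{m-1}\rangle$, and therefore $p = fp_1\in\langle f^m\rangle$, completing the induction.

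The main obstacle I anticipate is the bookkeeping in the shift: one must verify carefully that $f$ is still an irreducible invariant curve of $\F_n + \tfrac{K_n}{k-s}\FZERO$, that its new cofactor is again in the right quasi-homogeneous space, and that the chain of denominators $k-js$ never vanishes (which would be needed for the normalization $\tfrac{jK_n}{k-js}$ to make sense — presumably $k>ms$ or at least $k\neq js$ is implicitly assumed, or follows from $p\in\qh_k^\t$ being nonzero with $p/f^m$ of nonnegative degree). A secondary subtlety is the step where one cancels a factor of $f$ from an ideal membership: this uses that $\mathbb{C}[x,y]$ is a UFD and $\langle f\rangle$ is prime, i.e., $fg\in\langle f^m\rangle$ with $m\geq 2$ forces $g\in\langle f^{m-1}\rangle$, which is clean but should be stated. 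Everything else is a routine application of the Leibniz rule and the induction hypothesis.
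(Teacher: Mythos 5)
Your proof is correct and follows essentially the same route as the paper's: write $p=fp_1$, use the Leibniz rule together with the Euler identity $\nabla p_1\cdot\FZERO=(k-s)p_1$ to rewrite $K_np_1+\nabla p_1\cdot\F_n$ as $\nabla p_1\cdot\parent{\F_n+\frac{K_n}{k-s}\FZERO}$, and feed the result into Lemma \ref{irred1} for the shifted field. The only (organizational) difference is that the paper peels off $f^{m-1}$ at once via the case $m-1$ for the original $\F_n$ and then applies Lemma \ref{irred1} a single time to the $j=m-1$ shift, whereas you recurse on the once-shifted field; this forces the cofactor re-indexing you flag, which does check out, since the cofactor of $f$ for $\F_n+\frac{K_n}{k-s}\FZERO$ is $\frac{k}{k-s}K_n$ and $\frac{1}{k-s}+\frac{(j-1)k/(k-s)}{k-js}=\frac{j}{k-js}$.
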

\begin{proof} Lemma \ref{irred1} proves the statement for $m=1$. We first consider the case $m=2$. If $\nabla p\cdot\F_{n}\in\langle f^2\rangle$ then $\nabla p\cdot\F_{n}\in\langle f\rangle$ and by \ref{irred1} we have that there exists $p_1\in\qh_{k-s}^\t$ such that $p=fp_1$, therefore
\bean
\nabla p\cdot\F_{n}&=&\nabla fp_1\cdot\F_{n}=p_1\nabla f\cdot\F_{n}+f\nabla p_1\cdot\F_{n}=p_1K_{n}f+f\nabla\p_1\cdot\F_{n}\\
&=& f\parent{\nabla p_1\cdot \fracp{K_{n}}{k-s}\FZERO+\nabla p_1\cdot\F_{n}}=f\nabla p_1\parent{\F_{n}+\fracp{K_{n}}{k-s}\FZERO}\in\langle f^2\rangle\eean
 Hence $\nabla p_1\cdot\parent{\F_{n}+\fracp{K_{n}}{k-s}\FZERO}\in\langle f\rangle$ and since $\F_{n}+\fracp{K_{n}}{k-s}\FZERO$ is irreducible applying Lemma \ref{irred1} we have that
$p_1\in\langle f\rangle$ and consequently $p\in\langle f^2\rangle$.

Consider now the case $m=3$. If $\nabla p\cdot\F_{n}\in\langle f^3\rangle$ then $\nabla p\cdot\F_{n}\in\langle f^2\rangle$ and by the previous paragraph we have that there exists $p_2\in\qh_{k-2s}^\t$ such that $p=f^2p_2$, therefore
\bean \nabla p\cdot\F_{n}&=&\nabla f^2p_2\cdot\F_{n}=p_2\nabla f^2\cdot\F_{n}+f^2\nabla p_2\cdot\F_{n}=2p_2K_{n}f^2+f^2\nabla\p_2\cdot\F_{n}\\
&=&f^2\parent{\nabla p_2\cdot \fracp{2K_{n}}{k-2s}\FZERO+\nabla p_2\cdot\F_{n}}=f^2\nabla p_2\parent{\F_{n}+\fracp{2K_{n}}{k-2s}\FZERO}\in\langle f^3\rangle\eean
Hence $\nabla p_2\cdot\parent{\F_{n}+\fracp{2K_{n}}{k-2s}\FZERO}\in\langle f\rangle$ and since $\F_{n}+\fracp{2K_{n}}{k-2s}\FZERO$ is irreducible applying Lemma \ref{irred1} we have that
$p_2\in\langle f\rangle$ and consequently $p\in\langle f^3\rangle$.
Reasoning by induction we get the result for $m\in\mathbb{N}$.
\end{proof}

Next results are referred to nilpotent vector fields whose first quasi-homogeneous component is integrable and non-conservative.

\begin{proposition}\label{proNablapenIM} Consider $\F_n$ the first quasi-homogeneous component of vector field (\ref{SistObjestu1}) and let $I_M\in\qh_M^\t$ be a primitive first integral of $\F_n$. Let $p\in\qh_k^\t$, if $\nabla p\cdot\F_n$ is multiple of $I_M$ then $p$ is zero or a not null multiple of $I_M$.
\end{proposition}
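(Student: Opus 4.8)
The plan is to use the factorization structure of $I_M$ together with Lemma~\ref{irredfm}. Recall that $I_M=(y-x^{n+1})^{m_1}(y+x^{n+1})^{m_2}$ with $m_1,m_2$ coprime, and that by Lemma~\ref{lemm} the only irreducible quasi-homogeneous invariant curves of $\F_n$ are $f_1:=y-x^{n+1}$ and $f_2:=y+x^{n+1}$, with cofactors $K_n^{(1)}=(n+1)(d-1)x^n$ and $K_n^{(2)}=(n+1)(d+1)x^n$ respectively. So $I_M=f_1^{m_1}f_2^{m_2}$ and the hypothesis ``$\nabla p\cdot\F_n$ is a multiple of $I_M$'' reads $\Opl_{n+k}(p)\in\langle f_1^{m_1}f_2^{m_2}\rangle\subset\langle f_1^{m_1}\rangle\cap\langle f_2^{m_2}\rangle$.

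First I would apply Lemma~\ref{irredfm} to $f_1$: since $\Opl_{n+k}(p)\in\langle f_1^{m_1}\rangle$, provided the auxiliary vector fields $\F_n+\frac{jK_n^{(1)}}{k-j(n+1)}\FZERO$ are irreducible for $j=1,\dots,m_1-1$, we conclude $p\in\langle f_1^{m_1}\rangle$. Symmetrically, using $\Opl_{n+k}(p)\in\langle f_2^{m_2}\rangle$ and the corresponding irreducibility of $\F_n+\frac{jK_n^{(2)}}{k-j(n+1)}\FZERO$ for $j=1,\dots,m_2-1$, we get $p\in\langle f_2^{m_2}\rangle$. Since $f_1$ and $f_2$ are distinct irreducible (hence coprime) polynomials, $\langle f_1^{m_1}\rangle\cap\langle f_2^{m_2}\rangle=\langle f_1^{m_1}f_2^{m_2}\rangle=\langle I_M\rangle$, so $p\in\langle I_M\rangle$; by degree count $p=c\,I_M$ for some constant $c$ if $k=M$, or more generally $p=q\,I_M$ with $q\in\qh_{k-M}^\t$, which is the claim (zero, or a multiple of $I_M$).

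The step I expect to be the main obstacle is verifying the irreducibility hypotheses of Lemma~\ref{irredfm}, i.e.\ that $\F_n+\frac{jK_n^{(i)}}{k-j(n+1)}\FZERO$ is irreducible for the relevant ranges of $j$. Writing $K_n^{(i)}=(n+1)(d\mp 1)x^n$ and $\FZERO=(x,(n+1)y)^T$, this perturbed field is again of the form $\X_{\hx}+\tilde\mu\FZERO$ with the same $\hx=-\frac12(y^2-x^{2n+2})$ but a modified dissipative coefficient $\tilde d = d + \frac{j(n+1)(d\mp 1)}{k-j(n+1)}$. Reducibility of such a field would force $\tilde d$ to take one of finitely many exceptional values (those making $\hx$ divide the dissipative part appropriately, cf.\ the discussion around Proposition~\ref{propreFN}); since $d=\frac{m_1-m_2}{m_1+m_2}$ is rational with $|d|<1$, one checks these exceptional equalities cannot hold. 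I would isolate this as a short computation showing $\tilde d$ never hits the forbidden set in the required ranges $1\le j\le m_i-1$, $k>M$, thereby licensing the two applications of Lemma~\ref{irredfm} above and completing the proof.
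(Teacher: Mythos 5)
Your proposal follows the paper's proof essentially verbatim: factor $I_M=f_1^{m_1}f_2^{m_2}$, reduce the claim to $\Opl_{n+k}(p)\in\langle f_i^{m_i}\rangle\Rightarrow p\in\langle f_i^{m_i}\rangle$ for each branch, and invoke Lemma~\ref{irredfm}, so that the only substantive issue is the irreducibility of the perturbed fields $\F_n+\frac{jK_i}{k-j(n+1)}\FZERO$. The one place where your justification as written does not suffice is exactly that check. Writing the perturbed field as $\X_{\hx}+\tilde d\,x^n\FZERO$, it is reducible precisely when $\tilde d=\pm 1$, which translates into $d=1$ or $d=\frac{2j(n+1)}{k}-1$; the second exceptional value lies in $(-1,1)$ for the relevant $j,k$, so ``$d$ is rational with $|d|<1$'' does not rule it out. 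What rules it out is the explicit identity $d=\frac{2m_1(n+1)}{M}-1$ combined with the bounds $j\le m_1-1$ and $k\ge M$, giving $d>\frac{2j(n+1)}{M}-1\ge\frac{2j(n+1)}{k}-1$ (and symmetrically for $f_2$). Note also that $k\ge M$ is not free: the paper derives it first from the primitivity of $I_M$ (if $k<M$, then $\Opl_{n+k}(p)$ being a multiple of $I_M$ forces $\Opl_{n+k}(p)=0$, so $p$ would be a first integral of degree less than $M$, a contradiction). Once you supply that inequality chain and the $k\ge M$ step, your argument coincides with the paper's.
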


\begin{proof} We have that $k\geq M$ otherwise, if $k<M$ and $\Opl_{k+n}\parent{p}$ is a multiple of $I_M$ we obtain $\Opl_{k+n}\parent{p}=0$ which implies that $p$ is a first integral of $\F_n$ of degree less than $M$ which gives a contradiction since $I_M$ is a primitive first integral of $\F_n$.

The unique irreducible invariant curves of $\F_n$ are $f_1=(y-x^{n+1})\in\qh_{n+1}^\t$ and $f_2=(y+x^{n+1})\in\qh_{n+1}^\t$ with cofactors $K_1=(n+1)(d-1)x^n$ and $K_2=(n+1)(d+1)x^n$ respectively.

By Proposition \ref{pronilint} the polynomial $I_M=(y-x^{n+1})^{m_1}(y+x^{n+1})^{m_2}\in\qh_M^\t$, with $M=(n+1)(m_1+m_2)$ is a primitive first integral. It is sufficient to prove that if $\nabla p\cdot\F_n\in\langle f_i^{m_i}\rangle$ then $p\in\langle f_i^{m_i}\rangle$ for $i=1,2$.

\smallskip

We prove the case $i=1$, the case $i=2$ is analogous.

By Lemma \ref{irredfm} it suffices to prove that $\F_n+\fracp{jK_1}{k-j(n+1)}\FZERO$ is irreducible for $j=1,\cdots,m_1-1$. The vector field $\F_n+\fracp{jK_1}{k-j(n+1)}\FZERO=\F_n+\fracp{j(n+1)(d-1)}{k-j(n+1)}x^n\FZERO$ is irreducible if, and only if, $d\neq -1$ and $d\neq 1-\fracp{2j(n+1)}{k}$.

But $d\neq 1$ since $d=\fracp{m_1-m_2}{m_1+m_2}=1-\fracp{2m_2}{m_1+m_2}<1$ and $d\neq \fracp{2j(n+1)}{k}-1$ for $j=1,\cdots,m_1-1$ because
\bean d&=&\fracp{m_1-m_2}{m_1+m_2}=\fracp{2m_1}{m_1+m_2}-1=\fracp{2m_1(n+1)}{M}-1>\fracp{2j(n+1)}{M}-1\geq\fracp{2j(n+1)}{k}-1\eean
Therefore we have that $p\in\langle f_1^{m_1}\rangle$.
\end{proof}

\begin{lemma}\label{kerlk}
Assume that $I_M\in\qh_{M}^{\t}$, with $\t=(1,n+1)$, is a primitive first integral of $\F_n=(y+dx^{n+
1},(n+1)x^{2n+1}+d(n+1)x^ny)^T$, then

 $$Ker(\ell_k)= \left\{ \begin{array}{lcl}
             < I_M^l > &if& k-n = lM \\
             0  &if& k-n \neq lM
             \end{array}
   \right.$$
\end{lemma}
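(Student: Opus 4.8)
The plan is to deduce the statement from Proposition~\ref{proNablapenIM} by a finite descent on the quasi-homogeneous degree. One inclusion is immediate: if $k-n=lM$ then $I_M^l\in\qh_{k-n}^\t$ and $\Opl_k(I_M^l)=\nabla(I_M^l)\cdot\F_n=l\,I_M^{l-1}(\nabla I_M\cdot\F_n)=0$, since $I_M$ is a first integral of $\F_n$; hence $\langle I_M^l\rangle\subseteq\Ker(\Opl_k)$.

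For the reverse inclusion I would argue by strong induction on $k$. In the base cases $n\le k<n+M$: if $k=n$ then $\Opl_n$ annihilates the constants, so $\Ker(\Opl_n)=\qh_0^\t=\{1\}$-span$=\langle I_M^0\rangle$; if $n<k<n+M$ and $p\in\Ker(\Opl_k)$, then $\nabla p\cdot\F_n=0$ is a multiple of $I_M$, so Proposition~\ref{proNablapenIM} forces $p=0$ or $p$ to be a nonzero multiple of $I_M$, the latter being impossible since $\deg p=k-n<M$; thus $\Ker(\Opl_k)=\{0\}$, which agrees with the statement because $k-n$ is then not a multiple of $M$. For the inductive step, assume $k-n\ge M$ and let $p\in\Ker(\Opl_k)$ with $p\ne0$. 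Since $\nabla p\cdot\F_n=0$ is a multiple of $I_M$, Proposition~\ref{proNablapenIM} gives $p=q\,I_M$ with $q\in\qh_{k-n-M}^\t\setminus\{0\}$. Applying $\nabla(\cdot)\cdot\F_n$ to $p=q\,I_M$ and using $\nabla I_M\cdot\F_n=0$ yields $0=(\nabla q\cdot\F_n)\,I_M$, and since $\mathbb{C}[x,y]$ is an integral domain and $I_M\ne0$ this forces $\nabla q\cdot\F_n=0$, i.e.\ $q\in\Ker(\Opl_{k-M})$. By the induction hypothesis either $k-n-M$ is not a multiple of $M$, in which case $\Ker(\Opl_{k-M})=\{0\}$ and $q=0$, contradicting $q\ne0$; or $k-n-M=l'M$ for some integer $l'\ge0$, in which case $q\in\langle I_M^{l'}\rangle$ and hence $p=q\,I_M\in\langle I_M^{l'+1}\rangle$ with $k-n=(l'+1)M$. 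In either alternative the claimed description of $\Ker(\Opl_k)$ holds, completing the induction.

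The argument is a routine descent, so the substance is concentrated in Proposition~\ref{proNablapenIM} (which in turn rests on Lemma~\ref{irredfm} together with the irreducibility book-keeping for $\F_n+\fracp{jK_i}{k-j(n+1)}\FZERO$); I therefore do not expect a real obstacle in the lemma itself. The one point deserving care is the base of the induction, namely that $\F_n$ has no nonzero quasi-homogeneous first integral of degree strictly between $0$ and $M$, but this is exactly the primitivity of $I_M$ established in Proposition~\ref{pronilint} and Theorem~\ref{teointprimeranilp}, and is precisely the fact underlying Proposition~\ref{proNablapenIM}.
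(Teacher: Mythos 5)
Your proof is correct and takes essentially the same route as the paper: the paper's own proof consists of the single remark that it suffices to apply Proposition~\ref{proNablapenIM}, and your strong induction (base case from primitivity for degrees below $M$, then factoring out $I_M$ and descending to $\Ker(\ell_{k-M})$) is exactly the routine descent that this reduction leaves implicit. No gaps.
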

\begin{proof} It is sufficient to apply Proposition \ref{proNablapenIM}.
\end{proof}

Next statement establishes a cyclicity relation between the co-ranges of the operators $\ell_k$.

\begin{theorem}\label{ciclicidadl}
Assume that $I_M\in\qh_{M}^{\t}$, with $\t=(1,n+1)$, is a primitive first integral of $\F_n=(y+dx^{n+
1},(n+1)x^{2n+1}+d(n+1)x^ny)^T$, then we
can choose a complementary subspace to $\Range(\ell_{k+M})$, for $k\in\mathbb{N}$, $k\geq n$,
such that
$$\Cor(\ell_{k+M}) = I_M \Cor(\ell_{k}).$$
\end{theorem}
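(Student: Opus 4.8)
The plan is to exploit that multiplication by the primitive first integral $I_M$ intertwines the homological operators $\ell_k$ and $\ell_{k+M}$, thereby transporting a chosen complement of $\Range(\ell_k)$ to one of $\Range(\ell_{k+M})$. First I would record the intertwining identity: since $I_M$ is a first integral of $\F_n$ we have $\nabla I_M\cdot\F_n=0$, so the Leibniz rule gives, for every $p\in\qh_{k-n}^\t$,
$$\ell_{k+M}\parent{I_M\,p}=\nabla\parent{I_M\,p}\cdot\F_n=I_M\,\parent{\nabla p\cdot\F_n}=I_M\,\ell_k(p).$$
In particular $I_M\,\Range(\ell_k)\subseteq\Range(\ell_{k+M})$, and $I_M\,\Cor(\ell_k)$ is a subspace of $\qh_{k+M}^\t$ of dimension $\dim\Cor(\ell_k)$, since multiplication by the nonzero polynomial $I_M$ is injective.

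Next I would prove transversality: $I_M\,\Cor(\ell_k)\cap\Range(\ell_{k+M})=\llave{0}$. If $I_M\,c=\ell_{k+M}(\mu)=\nabla\mu\cdot\F_n$ with $c\in\Cor(\ell_k)$ and $\mu\in\qh_{k+M-n}^\t$, then $\nabla\mu\cdot\F_n$ is a multiple of $I_M$, hence by Proposition \ref{proNablapenIM} either $\mu=0$ or $\mu=I_M\,\tilde\mu$ for a (degree-determined) $\tilde\mu\in\qh_{k-n}^\t$; in either case the intertwining identity yields $I_M\,c=I_M\,\ell_k(\tilde\mu)$, so $c=\ell_k(\tilde\mu)\in\Range(\ell_k)\cap\Cor(\ell_k)=\llave{0}$.

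It then remains to check the dimension identity $\dim\Range(\ell_{k+M})+\dim\Cor(\ell_k)=\dim\qh_{k+M}^\t$, for together with the previous step this forces $\qh_{k+M}^\t=\Range(\ell_{k+M})\oplus I_M\,\Cor(\ell_k)$, so that $\Cor(\ell_{k+M}):=I_M\,\Cor(\ell_k)$ is an admissible choice. Writing $\dim\Range(\ell_j)=\dim\qh_{j-n}^\t-\dim\Ker(\ell_j)$ and $\dim\Cor(\ell_k)=\dim\qh_k^\t-\dim\Range(\ell_k)$, the identity reduces to
$$\dim\qh_{k+M}^\t-\dim\qh_{k+M-n}^\t=\parent{\dim\qh_k^\t-\dim\qh_{k-n}^\t}+\parent{\dim\Ker(\ell_k)-\dim\Ker(\ell_{k+M})}.$$
By Lemma \ref{kerlk} both $\dim\Ker(\ell_k)$ and $\dim\Ker(\ell_{k+M})$ equal $1$ when $M$ divides $k-n$ and $0$ otherwise (recall $k\ge n$, and $(k+M)-n=lM$ iff $k-n=(l-1)M$), so the last bracket vanishes; and for $\t=(1,n+1)$ one has $\dim\qh_j^\t=\floor{j/(n+1)}+1$ for $j\ge 0$, whence, since $M=(n+1)(m_1+m_2)$ is a multiple of $n+1$, $\dim\qh_{j+M}^\t=\dim\qh_j^\t+(m_1+m_2)$ for all $j\ge 0$; applying this with $j=k$ and $j=k-n$ (both nonnegative, as $k\ge n$) gives the first identity.

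I expect the main obstacle to be Proposition \ref{proNablapenIM} itself — the fact that $\nabla\mu\cdot\F_n$ being a multiple of $I_M$ forces $\mu$ to be a multiple of $I_M$ — which is the one non-formal ingredient and precisely the reason that proposition (together with Lemmas \ref{irred1} and \ref{irredfm}) was established beforehand; the remaining content, namely the intertwining identity and the dimension count (whose only delicate point is the equality of the two kernel dimensions via Lemma \ref{kerlk}), is routine.
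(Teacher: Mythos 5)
Your proposal is correct and follows essentially the same route as the paper: transversality of $I_M\,\Cor(\ell_k)$ with $\Range(\ell_{k+M})$ via Proposition \ref{proNablapenIM} together with the intertwining identity, followed by a dimension count in which the kernel contributions cancel by Lemma \ref{kerlk}. The only (harmless) difference is that you verify $\dim\qh_{k+M}^\t-\dim\qh_{k+M-n}^\t=\dim\qh_k^\t-\dim\qh_{k-n}^\t$ directly from $\dim\qh_j^{(1,n+1)}=\lfloor j/(n+1)\rfloor+1$, where the paper cites Lemma 2.3 of \cite{AlgabaNonlinearity09}.
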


\begin{proof}
We will prove that $I_M \Cor(\ell_k) \subset \Cor(\ell_{k+M})$ or
equivalently that $I_M\Cor(\ell_{k})\cap
\Range(\ell_{k+M})=\{0\}$ by \emph{reductio ad absurdum}.
 Let $p\in\Cor\parent{\Opl_k}\setminus\llave{0}$ such that $pI_M\in\Range\parent{\Opl_{k+M}}$, then there exists $q\in\qh_{k+M-n}^\t\setminus\llave{0}$ (exists because $k\geq n$ and therefore $\qh_{k-n}^\t\neq\llave{0}$), such that $\Opl_{k+M}(q)=pI_M$, that is, $\Opl_{k+M}(q)$ is multiple of $I_M$. Applying Proposition \ref{proNablapenIM} we have that $q=\tilde{q}I_M$ with $\tilde{q}\in\qh_{k-n}^\t\setminus\llave{0}$ and consequently
\bean pI_M&=&\nabla q\cdot\F_n=\nabla\tilde{q}I_M \cdot\F_n=I_M\nabla\tilde{q}\cdot\F_n.\eean
Hence $p=\nabla\tilde{q}\cdot\F_n$, that is, $p\in\Range\parent{\Opl_k}\cap\Cor\parent{\Opl_k}$ which gives a contradiction.

We have shown that $I_M \Cor(\ell_k) \subset
\Cor(\ell_{k+M})$. Therefore it is enough to prove that
$\dim(I_M \Cor(\ell_{k}))=\dim(\Cor(\ell_{k+M}))$.

Since $\ell_{k}$ and $\ell_{k+M}$ are linear operators, we
get
\begin{equation}\label{dimCork}
\dim(\Cor(\ell_{k}))= \dim(\qh_{k}^\t) - \dim(\qh_{k-n}^\t) +
\dim(\Ker(\ell_{k})). \end{equation}
\begin{equation}\label{dimCork+r+t}
\dim(\Cor(\ell_{k+M}))= \dim(\qh_{k+M}^\t) -
\dim(\qh_{k+M-n}^\t) + \dim(\Ker(\ell_{k+M})). \end{equation}

From (\ref{dimCork}), (\ref{dimCork+r+t}), taking into account that $\qh_{k-n}^\t\neq\llave{0}$ for $k\geq n$ and applying \cite[Lemma 2.3]{AlgabaNonlinearity09} we obtain that
\[
\dim\parent{\qh_{k+M}^{\t}}-\dim\parent{\qh_{k+M-n}^{\t}}
=\dim\parent{\qh_{k}^{\t}}-\dim\parent{\qh_{k-n}^{\t}}.
\]
So, we get
\bean\dim(\Cor(\ell_{k+M}))= \dim(\Cor(\ell_{k}))-\dim(\Ker(\ell_{k}))+ \dim(\Ker(\ell_{k+M})).\eean

Using Lemma \ref{kerlk} we obtain
$\dim(\Cor(\ell_k))=\dim(\Cor(\ell_{k+M}))$ and this completes
the proof.
\end{proof}

Next result provides an orbital normal form of vector field (\ref{SistObjestu1}). This normal form depends on the first integral of the first quasi-homogeneous component of (\ref{SistObjestu1}) and it is a suitable normal form for the applications.

\begin{theorem}\label{NFhfsimples} System (\ref{SistObjestu1}) is orbital equivalent to
$$\dot{\x}=\F_n+\sum_{j=n+1}^{M+n-1}\eta_j^{(0)}\FZERO+\sum_{i=1}^{\infty}\sum_{j=n}^{M+n-1}\eta_j^{(i)}I_M^i\FZERO,$$
with $\eta_j^{(i)}\in\Cor(\ell_j)$.
\end{theorem}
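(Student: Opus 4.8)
The statement is essentially a re-indexing of the orbital normal form already produced in Theorem~\ref{teFNnilpdisip}, organized according to the cyclic structure of the co-ranges of the operators $\ell_k$ given by Theorem~\ref{ciclicidadl}. So the plan is: (i) check that system (\ref{SistObjestu1}) falls under Theorem~\ref{teFNnilpdisip}; (ii) apply it; (iii) iterate Theorem~\ref{ciclicidadl} and collect the terms by residue classes modulo $M$.

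\textbf{Step 1 (apply Theorem~\ref{teFNnilpdisip}).} First I would verify the hypothesis $|d|\neq 1+\fracp{2(n+1)}{k}$ for all $k\in\N$. Since in system (\ref{SistObjestu1}) the coefficient is $d=\fracp{m_1-m_2}{m_1+m_2}$ with $m_1,m_2\in\N$ coprime and $m_1\neq m_2$ (case {\bf iii)} of Proposition~\ref{pronilint}), one has $0<|d|=\fracp{|m_1-m_2|}{m_1+m_2}<1\leq 1+\fracp{2(n+1)}{k}$ for every $k\in\N$, so the condition holds. Theorem~\ref{teFNnilpdisip} then gives that $\F$ is orbitally equivalent to $\F_n+\sum_{k>n}\mu_k\FZERO$ with $\mu_k\in\Cor(\ell_k)$; here the complements to $\Range(\ell_k)$ in $\qh_k^\t$ can be taken to be any fixed family, so I would choose from the outset the cyclic family supplied by Theorem~\ref{ciclicidadl}.

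\textbf{Step 2 (re-index).} Iterating the relation $\Cor(\ell_{k+M})=I_M\,\Cor(\ell_k)$ (valid for $k\geq n$) yields $\Cor(\ell_{j+iM})=I_M^i\,\Cor(\ell_j)$ for all $j\geq n$ and $i\geq 0$. Every integer $k>n$ is written uniquely as $k=j+iM$ with $i=\floor{(k-n)/M}\geq 0$ and $n\leq j\leq M+n-1$, and $i=0$ forces $n+1\leq j\leq M+n-1$. Hence each $\mu_k$ lies in $\Cor(\ell_k)=I_M^i\,\Cor(\ell_j)$, so there is a unique $\eta_j^{(i)}\in\Cor(\ell_j)$ with $\mu_k=\eta_j^{(i)}I_M^i$. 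Re-summing $\sum_{k>n}\mu_k\FZERO$ over the pairs $(i,j)$ instead of over $k$ gives exactly $\sum_{j=n+1}^{M+n-1}\eta_j^{(0)}\FZERO+\sum_{i=1}^{\infty}\sum_{j=n}^{M+n-1}\eta_j^{(i)}I_M^i\FZERO$, which is the claimed normal form.

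\textbf{Main obstacle.} There is no genuine analytic difficulty here, since the two substantive ingredients (Theorems~\ref{teFNnilpdisip} and~\ref{ciclicidadl}) are already proved, and convergence is not an issue by the remark in the introduction (formal $=$ analytic integrability for these fields). The only point requiring care is the combinatorial bookkeeping of Step~2: checking that $k\mapsto(i,j)$ is a bijection onto the index set appearing in the statement — in particular that the value $j=n$ is excluded precisely when $i=0$, matching the fact that the degree-$n$ part is the already-normalized term $\F_n$ — and noting that fixing the cyclic complements of Theorem~\ref{ciclicidadl} as the complements used in Theorem~\ref{teFNnilpdisip} is legitimate because the latter holds for any such choice.
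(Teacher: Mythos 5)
Your proposal is correct and follows essentially the same route as the paper: verify $|d|<1<1+\fracp{2(n+1)}{k}$ so that Theorem~\ref{teFNnilpdisip} applies, and then use the cyclicity $\Cor(\ell_{k+M})=I_M\,\Cor(\ell_k)$ from Theorem~\ref{ciclicidadl} to rewrite the co-range terms of degree $j\geq M+n$. The only difference is that you spell out the re-indexing $k=j+iM$ explicitly, which the paper leaves implicit.
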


\begin{proof}
As $|d|=\left|\fracp{m_1-m_2}{m_1+m_2}\right|<1<1+\fracp{2(n+1)}{k}$ for all $k\in\mathbb{N}$, applying Theorem \ref{teFNnilpdisip} we can assert that $\F$ is orbital equivalent to $\F_n+\sum_{j>n}\mu_j\FZERO$ with $\mu_j\in\Cor\parent{\Opl_j}$.
To finish the proof it is sufficient to apply Theorem \ref{ciclicidadl} for $j\geq M+n$.
\end{proof}

\noindent{\bf Remark:} In the particular case that $\F_n$ has a first integral, for calculating an orbital normal form of system (\ref{SistObjestu1}), we only
need the computation of a certain number of these co-ranges.
Concretely, $\Cor\parent{\Opl_j}$ from $j=n$ to $M+n-1$.

\section{Proofs of Theorems \ref{main} and \ref{main2}}

The proof of Theorem \ref{main} follows from the following proposition which characterizes the analytic integrability of a vector field through its orbital normal form.

\begin{proposition}\label{CNSintnilp} Let $\G=\sum_{j\geq r}\G_j$, $\G_j\in\QH_j^\t$, such that  $\G_r\in\QH_r^\t$ is one of the following vector fields.
\begin{description}
\item[a)] $\G_{r}=(y,x^{2n})^T\in\QH_{r}^\t$ with $\t=(2,2n+1)$, $r=2n-1$, $n\in\mathbb{N}$.
\item[b)] $\G_{r}=(y,\sigma x^{2n+1})^T\in\QH_r^\t$ with $\t=(1,n+1)$, $r=n$, $n\in\mathbb{N}$, $\sigma=\pm 1$.
\item[c)] $\G_{r}=(y+dx^{n+1},(n+1)x^{2n+1}+(n+1)dx^ny)^T\in\QH_r^\t$, $\t=(1,n+1)$, $r=n$, $n\in\mathbb{N}$, $d=\fracp{m_1-m_2}{m_1+m_2}\neq 0$ with $m_1,m_2\in\mathbb{N}$, coprimes.
\end{description}
$\G$ is analytically integrable if, and only if, $\G$ is orbitally equivalent to $\G_r$
\end{proposition}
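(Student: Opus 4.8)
\noindent The plan is to dispatch the ``if'' direction in one line, to reduce the conservative cases a), b) to an existing result, and to carry out one genuine computation in case c). For the ``if'' direction: in each of a), b), c) the leading term $\G_r$ has an explicit polynomial first integral ($I_{2(2n+1)}$, $I_{2(n+1)}$, $I_M$, respectively, displayed in Proposition \ref{pronilint}), and orbital equivalence carries a first integral of $\G_r$ to one of $\G$ by composition with the conjugating diffeomorphism (the unit factor does not affect the vanishing of the Lie derivative); hence $\G$ orbitally equivalent to $\G_r$ implies $\G$ analytically integrable. For the ``only if'' direction I would first treat a) and b), which are conservative: there $\G_r=\X_{\hx}$ with $\hx$ a nonzero multiple of the first integrals in Proposition \ref{pronilint}, hence with only simple factors over $\mathbb{C}[x,y]$. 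These are precisely the situations i), ii) for which the equivalence of analytic integrability with orbital equivalence to $\G_r$ is established in \cite{AlgabaNonlinearity09}, so there I would simply invoke that result. Everything then reduces to case c).

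For case c), assume $\G$ is of the form (\ref{SistObjestu1}) and analytically integrable. Since $|d|<1<1+\frac{2(n+1)}{k}$ for every $k\in\mathbb{N}$, Theorem \ref{teFNnilpdisip} shows that $\G$ is orbitally equivalent to $\hat{\G}=\F_n+\sum_{k>n}\eta_k\FZERO$, with $\eta_k\in\Cor(\ell_k)$, where I take the complements to be the cyclic ones of Theorem \ref{ciclicidadl}, that is, $\Cor(\ell_{k+M})=I_M\Cor(\ell_k)$ (this is also the form underlying Theorem \ref{NFhfsimples}). It suffices to show that every $\eta_k$ vanishes, for then $\hat{\G}=\F_n=\G_r$. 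Now $\hat{\G}$ is again analytically integrable and of the form (\ref{SistObjestu1}), so Theorem \ref{teointprimeranilp} provides a first integral $I=I_M+\sum_{i\ge1}I_{M+i}$ with $I_{M+i}\in\qh_{M+i}^\t$. Arguing by contradiction, suppose some $\eta_k\neq0$ and let $k_0>n$ be minimal with $\eta_{k_0}\neq0$. Expanding $\nabla I\cdot\hat{\G}=0$ by quasi-homogeneous degree and using the Euler identity $\nabla I_{M+i}\cdot\FZERO=(M+i)I_{M+i}$, the component of degree $M+k_0$ is
$$\ell_{M+k_0}(I_{M+k_0-n})+M\,\eta_{k_0}I_M=0,$$
since the only way to pair a component of $\eta$ (of degree $\ge k_0$) with a component $I_{M+i}$ (of degree $\ge M$) so as to reach degree $M+k_0$ is $\eta_{k_0}$ with $I_M$. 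Hence $\eta_{k_0}I_M\in\Range(\ell_{M+k_0})$. On the other hand $\eta_{k_0}\in\Cor(\ell_{k_0})$, so by Theorem \ref{ciclicidadl}, $\eta_{k_0}I_M\in I_M\Cor(\ell_{k_0})=\Cor(\ell_{M+k_0})$; since $\Cor(\ell_{M+k_0})$ is complementary to $\Range(\ell_{M+k_0})$ this forces $\eta_{k_0}I_M=0$, hence $\eta_{k_0}=0$, contradicting the choice of $k_0$. Therefore $\eta\equiv0$ and $\G$ is orbitally equivalent to $\G_r$.

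I expect the main obstacle to be exactly the interplay between this degree-$(M+k_0)$ identity and the cyclicity Theorem \ref{ciclicidadl}. Two points must be handled with care: choosing $k_0$ minimal is what keeps all lower-order coefficients $I_{M+i}$ ($i\ge1$) out of the degree-$(M+k_0)$ equation, so that $\eta_{k_0}I_M$ lands exactly in $\Range(\ell_{M+k_0})$; and the complements used in the normal form for $\hat{\G}$ must be the very ones for which $\Cor(\ell_{M+k_0})=I_M\Cor(\ell_{k_0})$ holds, so that $\eta_{k_0}I_M$ is simultaneously in the range and in the complement of $\ell_{M+k_0}$ and hence vanishes. The remaining verifications — that $\hat{\G}$ really has the stated normal form with $\eta_k\in\Cor(\ell_k)$, and the precise shapes of $\F_n$ and $I_M$ — are routine, relying on Theorem \ref{teFNnilpdisip} and Proposition \ref{pronilint} together with bookkeeping over the quasi-homogeneous gradings.
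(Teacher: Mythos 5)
Your proposal is correct and follows essentially the same route as the paper: the sufficiency via polynomial integrability of $\G_r$, the reduction of cases a), b) to the conservative results of \cite{AlgabaNonlinearity09} (the paper routes this through the orbital normal forms of \cite{Algaba_Garcia_Reyes_Fiv} plus \cite[Theorem 3.19]{AlgabaNonlinearity09}, but the substance is the same), and for c) the identical contradiction at degree $M+k_0$ combining Theorem \ref{teFNnilpdisip}, Theorem \ref{teointprimeranilp} and the cyclicity Theorem \ref{ciclicidadl}. Your degree bookkeeping isolating the two terms $\ell_{M+k_0}(I_{M+k_0-n})$ and $M\eta_{k_0}I_M$ matches the paper's computation with $N$ in place of $k_0$.
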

\begin{proof} The sufficiency is trivial since $\G_{2n-1}$ of statement {\bf a)} and $\G_{n}$ of statement  {\bf b)} are polynomially integrable because are Hamiltonian vector fields and $\G_n$ of statement {\bf c)} is polynomially integrable by Theorem \ref{ci}.

\smallskip

Now we will see the necessity of the conditions. Let $\G=\sum_{j\geq r}\G_j$, $\G_j\in\QH_j^\t$ such that $\G_r$ is a quasi-homogeneous vector field of type given in some of statements {\bf a)}, {\bf b)} or {\bf c)}.
\begin{description}
\item[a)] If $\G_r=(y,x^{2n})\in\QH_{r}^\t$ with $\t=(2,2n+1)$ and $r=2n-1$ applying \cite[Proposition 13, Theorem 14]{Algaba_Garcia_Reyes_Fiv} we have that $\G_r+\cdots$ is orbitally equivalent to $\G_r+\sum_{j>r}\Cor\parent{\Opl_j}\FZERO$. Applying now \cite[Theorem 3.19]{AlgabaNonlinearity09} the result follows.
\item[b)] If $\G_r=(y,\sigma x^{2n+1})\in\QH_{r}^\t$ wit $\t=(1,n+1)$ and $r=n$ applying \cite[Proposition 15, Theorem 16]{Algaba_Garcia_Reyes_Fiv} we have that $\G_r+\cdots$ is orbitally equivalent to  $\G_r+\sum_{j>r}\Cor\parent{\Opl_j}\FZERO$. Applying now \cite[Theorem 3.19]{AlgabaNonlinearity09} the result follows.
\item[c)] If $\G_{r}=(y+dx^{n+1},(n+1)x^{2n+1}+(n+1)dx^ny)^T\in\QH_n^\t$, with $\t=(1,n+1)$, $n\in\mathbb{N}$, $r=n$ and $d=\fracp{m_1-m_2}{m_1+m_2}\neq 0$ with $m_1,m_2\in\mathbb{N}$, $m_1, m_2$ coprimes. Since $d=\fracp{m_1-m_2}{m_1+m_2}<1< 1+\fracp{2(n+1)}{k}$, applying Proposition \ref{teFNnilpdisip} we have that $\G$ is orbitally equivalent to $\widetilde{\G}:=\G_n+\sum_{j>n}\mu_j\FZERO$, with $\FZERO=(x,(n+1)y)^T$ and  $\mu_j\in\Cor\parent{\Opl_j}$. By Theorem \ref{ciclicidadl} we have that $I_M\Cor\parent{\Opl_j}\cap\Range\parent{\Opl_{M+j}}=\llave{0}$.

Assume that $\widetilde{\G}$ is integrable and not all the $\mu_j$ are zero. Let $N$ defined by
$N=\min\llave{j>n : \mu_j\not\equiv 0}$, by Theorem \ref{teointprimeranilp} a first integral of $\widetilde{\G}$ is of the form  $I=I_M+\sum_{j>M}I_j$ with $I_j\in\qh_j^\t$. Imposing
    the integrability condition we have
\bean 0&=&\parent{\nabla I\cdot\widetilde{\G}}_{N+M}=\nabla I_M\cdot\parent{\mu_N\FZERO}+\nabla I_{M+N-n}\cdot\G_n\\
&=&M\mu_NI_M+\Opl_{M+N}\parent{I_{M+N-n}}\eean
But this equation is incompatible since by Theorem \ref{ciclicidadl} $M\mu_NI_M\in\Cor\parent{\Opl_{M+N}}$ and $\Opl_{M+N}\parent{I_{M+N-n}}=-M\mu_NI_M\in\Range\parent{\Opl_{M+N}}$ which is a contradiction.
\end{description}
\end{proof}

\begin{remark}The proof of Theorem \ref{main} follows of taking into account that by Proposition \ref{pronilint}, there exist $\Phi_0\in\QH_0^\t$, with $\det\parent{D\Phi_0(\zero)}\neq 0$ such that $\parent{\Phi_0}_*\F=\G_r+\cdots$. By applying Proposition \ref{CNSintnilp} $\parent{\Phi_0}_*\F$ is analytically integrable if, and only if, $\parent{\Phi_0}_*\F$ is orbitally equivalent to $\parent{\Phi_0}_*\F_r$ or equivalently $\F$ is orbitally equivalent to $\F_r$.
\end{remark}

\begin{remark}The proof of Theorem \ref{main2}  follows from Theorem \ref{main} and applying \cite[Theorem 1.3]{Algaba09_like}.\end{remark}

\section{Example}

Consider the integrability problem of the following nilpotent system, whose first quasi-homogeneous component is non-conservative.
 \bea\label{FD12disipdoscomp}
 \vedo{\dot{x}}{\dot{y}}=\vedo{y-\fracp{1}{3}x^2}{2x^3-\fracp{2}{3}x y} +
 \vedo{a_1xy}{b_0 x^4 + b_2y^2}.
 \eea

The first quasi-homogeneous component of the vector field respect to the type $\t=(1,2)$ is
\[
\F_1=\vedo{y-\fracp{1}{3}x^2}{2x^3-\fracp{2}{3}xy}\in\QH_1^{(1,2)},
\]
where $\F_1=\X_{\hx}+\mu\FZERO$ with $\hx=-\fracp{1}{2}\left(y^2-x^4\right)$, $\mu=-\fracp{1}{3}x$ and $\FZERO=(x,2y)^T$. Hence the origin of system (\ref{FD12disipdoscomp}) is not monodromic. In fact is a saddle with two invariant curves $C_1=(y-x^2)+\cdots$ and $C_2=(y+x^2)+\cdots$. Moreover $\F_1$ is integrable and a primitive first integral is $I_6=(y-x^2)(y+x^2)^2$.
In order to study the integrability problem of family (\ref{FD12disipdoscomp}), the following lemma provides the formal normal form under equivalence of any perturbation of $\F_1$.

\begin{lemma} A formal normal form under equivalence of the system $\dot{\x}=\F_1+\cdots$, with $\F_1=(y-\fracp{1}{3}x^2, 2x^3-\fracp{2}{3}xy)^T$ is given by
\bea\label{FNF1} \vedo{\dot{x}}{\dot{y}}&=&\vedo{y-\fracp{1}{3}x^2}{2x^3-\fracp{2}{3}xy}+\alpha_2^{(0)}x^2\FZERO+\alpha_4^{(0)}\hx\FZERO\\
\nonumber&&+
\sum_{l\geq 1} \alpha_0^{(l)}I_6^l\FZERO+\alpha_1^{(l)}xI_6^l\FZERO+\alpha_2^{(l)}x^2I_6^l\FZERO+
\alpha_4^{(l)}\hx I_6^l\FZERO.
\eea
\end{lemma}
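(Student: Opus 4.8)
The plan is to specialize the abstract orbital normal form of Theorem~\ref{NFhfsimples} to this vector field, and to reduce the statement to the explicit computation of finitely many corank spaces $\Cor(\ell_j)$.

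First I would identify the data. With respect to $\t=(1,2)$ one has $\F_1=\X_{\hx}+\mu\FZERO$ with $\hx=-\fracp{1}{2}(y^2-x^4)\in\qh_4^{\t}$, $\mu=-\fracp{1}{3}x\in\qh_1^{\t}$ and $\FZERO=(x,2y)^T$, so $\F_1$ is precisely the leading term of system~(\ref{SistObjestu1}) with $n=1$ and $d=-\fracp{1}{3}=\fracp{m_1-m_2}{m_1+m_2}$ for the coprime pair $(m_1,m_2)=(1,2)$. By Proposition~\ref{pronilint}, case {\bf iii)}, the polynomial $I_6=(y-x^2)(y+x^2)^2=-x^6-x^4y+x^2y^2+y^3\in\qh_6^{\t}$ is a primitive first integral of $\F_1$, hence $M=(n+1)(m_1+m_2)=6$. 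Since $|d|=\fracp{1}{3}<1+\fracp{2(n+1)}{k}$ for every $k\in\mathbb{N}$, the hypotheses of Theorems~\ref{teFNnilpdisip} and~\ref{NFhfsimples} hold, and Theorem~\ref{NFhfsimples} gives that $\dot{\x}=\F_1+\cdots$ is orbitally equivalent to
\[
\F_1+\sum_{j=2}^{6}\eta_j^{(0)}\FZERO+\sum_{i\geq1}\sum_{j=1}^{6}\eta_j^{(i)}I_6^{i}\FZERO,\qquad \eta_j^{(i)}\in\Cor(\ell_j),
\]
where $\ell_j\colon\qh_{j-1}^{\t}\to\qh_j^{\t}$, $p\mapsto\nabla p\cdot\F_1$.

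Next I would compute $\Cor(\ell_j)$ for $j=1,\dots,6$; by the cyclicity $\Cor(\ell_{k+6})=I_6\Cor(\ell_k)$ of Theorem~\ref{ciclicidadl} nothing beyond one period is needed. The dimensions follow from $\dim\Cor(\ell_j)=\dim\qh_j^{\t}-\dim\qh_{j-1}^{\t}+\dim\Ker(\ell_j)$ together with Lemma~\ref{kerlk}: since $j-1<6=M$ when $j\leq6$, one has $\Ker(\ell_j)=\llave{0}$ for $2\leq j\leq6$ and $\Ker(\ell_1)=\langle1\rangle$, whence the dimensions of $\Cor(\ell_1),\dots,\Cor(\ell_6)$ are $1,1,0,1,0,1$. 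A direct evaluation of $\ell_j$ on the monomial bases then pins down the generators: $\ell_1\equiv0$, so $\Cor(\ell_1)=\langle x\rangle$; $\ell_2(x)=y-\fracp{1}{3}x^2$, so $\Cor(\ell_2)=\langle x^2\rangle$; $\Cor(\ell_3)=\Cor(\ell_5)=\llave{0}$; writing out $\ell_4$ in the bases $\langle x^3,xy\rangle$, $\langle x^4,x^2y,y^2\rangle$ and checking $\hx\notin\Range(\ell_4)$ gives $\Cor(\ell_4)=\langle\hx\rangle$; writing out $\ell_6$ in the bases $\langle x^5,x^3y,xy^2\rangle$, $\langle x^6,x^4y,x^2y^2,y^3\rangle$ and checking $I_6\notin\Range(\ell_6)$ gives $\Cor(\ell_6)=\langle I_6\rangle$.

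Finally I would substitute, writing each $\eta_j^{(i)}$ as a scalar $\alpha$ times the corresponding generator and collecting terms. The $j=6$ contributions are pure powers of $I_6$ times $\FZERO$: $\eta_6^{(0)}\FZERO$ is a multiple of $I_6\FZERO$ and $\eta_6^{(i)}I_6^{i}\FZERO$ is a multiple of $I_6^{i+1}\FZERO$, so after relabeling they coalesce into $\sum_{l\geq1}\alpha_0^{(l)}I_6^{l}\FZERO$; the contributions with $j\in\{1,2,4\}$ reproduce respectively $\sum_{l\geq1}\alpha_1^{(l)}xI_6^{l}\FZERO$, then $\alpha_2^{(0)}x^2\FZERO+\sum_{l\geq1}\alpha_2^{(l)}x^2I_6^{l}\FZERO$, and $\alpha_4^{(0)}\hx\FZERO+\sum_{l\geq1}\alpha_4^{(l)}\hx I_6^{l}\FZERO$; the $j=3,5$ contributions vanish. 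This is exactly~(\ref{FNF1}). The only computational step, and the main (mild) obstacle, is the explicit rank/complement verification for $\ell_4$ and $\ell_6$, i.e.\ showing $\hx\notin\Range(\ell_4)$ and $I_6\notin\Range(\ell_6)$; the rest is bookkeeping and reindexing of the series furnished by Theorem~\ref{NFhfsimples}.
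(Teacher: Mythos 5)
Your proposal is correct and follows essentially the same route as the paper: both reduce the lemma to Theorem \ref{NFhfsimples} with $n=1$, $M=6$, and then compute $\Cor(\ell_j)$ for $j=1,\dots,6$, obtaining the generators $x$, $x^2$, $0$, $\hx$, $0$, $I_6$ and reassembling the series. The only cosmetic difference is that you first fix the corank dimensions via Lemma \ref{kerlk} and the dimension formula before exhibiting generators, whereas the paper evaluates each $\ell_j$ on explicit bases directly.
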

\begin{proof} Applying Theorem \ref{NFhfsimples} it is only necessary to compute $\Cor\parent{\Opl_j}$ for $n\leq j \leq M+n-1$, i.e., for $1\leq j\leq 6$.
\begin{itemize}
\item Case $j=1$. In this case
$\qh_{0}^\t=span\{1\}$ and
$\qh_{1}^\t=span\{x\}$. If we take $\mu_{0}=\alpha
\in \qh_{0}^t$ then, $\ell_1(\mu_{0})= 0$.
In consequence $\Cor(\ell_1)=span\{x\}$.
\item Case $j=2$. In this case
$\qh_{1}^\t=span\{x\}$ and
$\qh_{2}^\t=span\{x^2,xy\}$. If we take $\mu_{1}=\alpha
x\in \qh_{1}^t$ then, $\ell_2(\mu_{1})= \alpha (y-\fracp{1}{2}x^2)$.
In consequence $\Cor(\ell_2)=\{x^2\}$.
\item Case $j=3$. In this case $\qh_{2}^\t=span\{x^{2},
y\}$ and $\qh_{3}^\t=span\{x^{3},xy\}$. If we take
$\mu_{2}=\alpha x^{2}+\beta y \in \qh_{2}^t$ then,
$\ell_3(\mu_{2})=-\fracp{2}{3}(\alpha-3\beta)x^3+\fracp{2}{3}(3\alpha-\beta)xy$. Therefore we obtain
$\Cor(\ell_3)=\{0\}$.
\item Case $j=4$. In this case $\qh_{3}^\t=span\{x^{3},
xy\}$ and $\qh_{4}^\t=span\{x^{4},x^2y, \hx\}$. If we take
$\mu_{3}=\alpha x^{3}+\beta xy \in \qh_{3}^t$ then,
$\ell_4(\mu_{3})=(3\beta-\alpha)x^4+(3\alpha-\beta)x^2y-2\beta \hx$. Therefore we obtain
$\Cor(\ell_4)=span\{\hx\}$.
\item Case $j=5$. In this case $\qh_{4}^\t=span\{x^{4},
x^2y,\hx\}$ and $\qh_{5}^\t=span\{x^{5},x^3y, x\hx\}$. If we take
$\mu_{4}=\alpha x^{4}+\beta x^2y+\gamma \hx \in \qh_{4}^t$ then,
$\ell_5(\mu_{4})=-\fracp{4}{3}[(\alpha-3\beta)x^5+(\beta+3\alpha)x^3y+(3\beta+\gamma)x\hx]$. Therefore we obtain
$\Cor(\ell_5)=\{0\}$.
\item Case $j=6$. In this case $\qh_{5}^\t=span\{x^{5},
x^3y,x\hx\}$ and $\qh_{6}^\t=span\{x^{6},x^4y, x^2\hx, I_6\}$. If we take
$\mu_{5}=\alpha x^{5}+\beta x^3y+\gamma x\hx \in \qh_{5}^t$ then,
$\ell_6(\mu_{5})=-\fracp{5}{3}(\alpha-3\beta)x^6-\fracp{5}{3}(\beta-3\alpha)x^4y-\fracp{2}{3}(4\gamma+9\beta)x^2\hx-\fracp{1}{2}\gamma I_6$. Therefore we obtain
$\Cor(\ell_6)=span\{I_6\}$.
%\item Case $j=7$. In this case $\qh_{6}^\t=span\{x^{6},
%x^4y,x^2\hx, I_6\}$ and $\qh_{7}^\t=span\{x^{7},x^5y, x^3\hx, xI_6\}$. If we take
%$\mu_{6}=\alpha x^{6}+\beta x^4y+\gamma x^2\hx+\delta I_6 \in \qh_{6}^t$ then,
%$\ell_7(\mu_{6})=-2(\alpha-3\beta)x^7-2(\beta-3\alpha)x^5y-4(\gamma+2\beta)x^3\hx-\gamma xI_6$. Therefore we obtain
%$\Cor(\ell_7)=\{xI_6\}$, también podemos considerar $\Cor(\ell_4)=\{x^3\hx\}$.
%\item Case $j=8$. In this case applying Theorem \ref{ciclicidadl}, we obtain
%$\Cor(\ell_8)=I_M\Cor(\ell_2)=\{x^2I_6\}$.
%\item Case $j=9$. In this case applying Theorem \ref{ciclicidadl}, we obtain
%$\Cor(\ell_9)=I_M\Cor(\ell_3)=\{0\}$.
%\item Case $j=10$. In this case applying Theorem \ref{ciclicidadl}, we obtain
%$\Cor(\ell_{10})=I_M\Cor(\ell_4)=\{\hx I_6\}$.
%\item Case $j=11$. In this case applying Theorem \ref{ciclicidadl}, we obtain
%$\Cor(\ell_{11})=I_M\Cor(\ell_5)=\{0\}$.
%\item Case $j=12$. In this case applying Theorem \ref{ciclicidadl}, we obtain
%$\Cor(\ell_{12})=I_M\Cor(\ell_6)=\{I_6^2\}$.
\end{itemize}
\end{proof}

\begin{theorem}
System (\ref{FD12disipdoscomp}) is analytically integrable if, and only if, the following
condition holds
$$a_{1}=b_0+b_2=0.$$
\end{theorem}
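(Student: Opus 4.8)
The plan is to apply Theorem~\ref{main} in the form of Proposition~\ref{CNSintnilp}. The first quasi-homogeneous component of~(\ref{FD12disipdoscomp}) with respect to $\t=(1,2)$ is $\F_1=(y-\fracp{1}{3}x^2,\,2x^3-\fracp{2}{3}xy)^T$, which is of type~{\bf c)} with $n=1$, $d=-\fracp{1}{3}=\fracp{m_1-m_2}{m_1+m_2}$, $m_1=1$, $m_2=2$, $M=6$ and primitive first integral $I_6=(y-x^2)(y+x^2)^2$. Hence, by Proposition~\ref{CNSintnilp}, system~(\ref{FD12disipdoscomp}) is analytically integrable if and only if it is orbitally equivalent to $\F_1$, i.e.\ if and only if all the coefficients $\alpha_j^{(i)}$ of the orbital normal form~(\ref{FNF1}) vanish. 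Since $\Cor(\ell_3)=\Cor(\ell_5)=\llave{0}$, the base level of that normal form is $\F_1+\alpha_2^{(0)}x^2\FZERO+\alpha_4^{(0)}\hx\FZERO$, so the statement reduces to showing that the two conditions $\alpha_2^{(0)}=0$ and $\alpha_4^{(0)}=0$ are jointly equivalent to $a_1=b_0+b_2=0$, and that these conditions force all the remaining $\alpha_j^{(i)}$ to vanish as well.

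For the sufficiency the plan is to avoid the normal form and argue directly. If $a_1=0$ and $b_2=-b_0$, the perturbation is $\F_2=(0,\,b_0(x^4-y^2))^T=(0,2b_0\hx)^T$, and $V=y^2-x^4=-2\hx$ is an inverse integrating factor: one checks $\nabla V\cdot\F=V\,\diverg(\F)$ with $\diverg(\F)=-\fracp{4}{3}x-2b_0y$. Equivalently,
$$\widetilde{I}=(y-x^2)(y+x^2)^2\,e^{3b_0x}=I_6+3b_0x\,I_6+\cdots$$
is an analytic first integral: since $\nabla I_6\cdot\F_1=0$ and $\partial_yI_6=(y+x^2)(3y-x^2)$, one obtains $\nabla I_6\cdot\F=-b_0I_6(3y-x^2)$, whence $\nabla\widetilde{I}\cdot\F=e^{3b_0x}\big(\nabla I_6\cdot\F+3b_0I_6(y-\fracp{1}{3}x^2)\big)=0$. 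Thus $\F$ is analytically integrable and, a posteriori, all $\alpha_j^{(i)}$ vanish.

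For the necessity, suppose $\F$ is analytically integrable. By Theorem~\ref{teointprimeranilp} it admits a formal first integral $I=I_6+I_7+I_8+\cdots$; equating the degree-$8$ part of $\nabla I\cdot\F$ to zero yields $\ell_8(I_7)=-\nabla I_6\cdot\F_2$, which is solvable if and only if $\nabla I_6\cdot\F_2\in\Range(\ell_8)$. By Theorem~\ref{ciclicidadl}, $\Cor(\ell_8)=I_6\Cor(\ell_2)=\langle x^2I_6\rangle$, so $\Range(\ell_8)$ is a hyperplane of $\qh_8^\t$; evaluating the linear functional that annihilates it on $\nabla I_6\cdot\F_2$ gives $-1296(b_0+b_2)-2784\,a_1$, so the first necessary condition is $27(b_0+b_2)+58a_1=0$ (that is, $\alpha_2^{(0)}=0$). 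Assuming this, one fixes $I_7$ and then the uniquely determined $I_8$ (note $\Cor(\ell_9)=I_6\Cor(\ell_3)=\llave{0}$); the degree-$10$ equation $\ell_{10}(I_9)=-\nabla I_8\cdot\F_2$ then carries a second obstruction in $\Cor(\ell_{10})=I_6\Cor(\ell_4)=\langle\hx I_6\rangle$, whose vanishing is $\alpha_4^{(0)}=0$ and produces a second relation between $a_1$ and $b_0+b_2$; together with the first relation it forces $a_1=0$ and $b_0+b_2=0$.

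The main obstacle is this second computation: it requires the explicit terms $I_7$ and $I_8$ of the first integral (equivalently, the coordinate change removing the degree $2,3,4$ parts of $\F$) and then the extraction of the component along $\hx I_6$ in $\qh_{10}^\t$. The degree-$8$ step and the whole sufficiency part are short direct verifications.
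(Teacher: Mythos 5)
Your overall strategy coincides with the paper's: both reduce the problem to the vanishing of two successive obstructions and then exhibit the explicit first integral $(y-x^2)(y+x^2)^2e^{-3b_2x}$ for the sufficiency (your verification of this, and of the inverse integrating factor $y^2-x^4$, is correct and is exactly the integral the paper writes down). The only real difference is where the obstructions are read off: the paper computes them as the normal-form coefficients $\alpha_2^{(0)},\alpha_4^{(0)}$ of (\ref{FNF1}), while you extract them from the first-integral hierarchy $\ell_{k}(I_{k-1})=-\nabla I_{k-2}\cdot\F_2$; by Proposition \ref{CNSintnilp} {\bf c)} these are the same obstructions, and your degree-$8$ condition $27(b_0+b_2)+58a_1=0$ agrees with the paper's $\alpha_2^{(0)}=\fracp{1}{336}(27b_2+27b_0+58a_1)$.

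The genuine gap is in the necessity direction, at precisely the step you flag as ``the main obstacle''. You assert that the degree-$10$ obstruction ``produces a second relation between $a_1$ and $b_0+b_2$'' which, together with the first, forces $a_1=b_0+b_2=0$. This does not follow from the structure of the argument: it is a specific polynomial identity that must be computed, and it is not a relation between $a_1$ and $b_0+b_2$ alone. After substituting $b_0=-\fracp{58}{27}a_1-b_2$, the paper finds
$$\alpha_4^{(0)}=-\fracp{2}{31104}\,a_1\left(1562\left(a_1+\fracp{3605}{3124}b_2\right)^2+\fracp{1511831}{6248}\,b_2^2\right),$$
i.e.\ $a_1$ times a quadratic form in $(a_1,b_2)$ that is strictly positive unless $a_1=b_2=0$. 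The entire content of the ``only if'' direction is this positivity: it is what guarantees that the vanishing of the second obstruction forces $a_1=0$, rather than merely cutting out some curve in the $(a_1,b_2)$-plane --- in which case it would remain open whether further obstructions impose additional conditions, or indeed whether the stated conditions are necessary at all. Until you carry out the degree-$10$ computation and establish this factorization and sign, the necessity half of the theorem is not proved. Everything else in your plan (the uniqueness of $I_7,I_8$ from $\Ker(\ell_8)=\Ker(\ell_9)=\llave{0}$, and the identifications $\Cor(\ell_8)=\langle x^2I_6\rangle$, $\Cor(\ell_9)=\llave{0}$, $\Cor(\ell_{10})=\langle \hx I_6\rangle$ via Theorem \ref{ciclicidadl}) is sound.
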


\begin{proof}
The proof consist in computing successively the
coefficients of the dissipative part of the normal form (\ref{FNF1}) and imposing
their vanishing because they prevent the integrability.
The normal form has the expression given in (\ref{FNF1}). It is easy to show that the first coefficient of the normal form is
$$
\alpha_{2}^{(0)} = \fracp{1}{336}(27b_2+27b_0+58a_1).$$
Imposing the vanish of this coefficient we obtain the first condition of integrability given by
$$b_0= -\fracp{58}{27}a_1-b_2.$$
With this condition, the second coefficient takes the form
\bean \alpha_4^{(0)}&=&-\fracp{2}{31104}a_1\left(1562(a_1+\fracp{3605}{3124}b_2)^2+\fracp{1511831}{6248}b_2^2\right)\eean

Now we consider two cases, the case {\bf (i)} $a_1=0$ and the case {\bf (ii)} $a_1\neq 0$.
\begin{description}
\item[i)]  In this case we have $b_0=-b_2$ and the following coefficients are
 $\alpha_0^{(1)}=\alpha_1^{(1)}=\alpha_2^{(1)}=\alpha_4^{(1)}=\alpha_0^{(2)}=\alpha_1^{(2)}=\alpha_2^{(2)}=0$. The vector field obtained is
 \bean \vedo{y-\fracp{1}{3}x^2}{2x^3-\fracp{2}{3}x y}+\vedo{0}{-b_2x^4+b_2y^2},\eean
and this system has the analytic first integral $I=(y-x^2)(y+x^2)^2\exp(-3b_2x)$.

\item[ii)] In the case $a_1\neq 0$ we have $\alpha_4^{(0)}\neq 0$ because  $\left(1562(a_1+\fracp{3605}{3124}b_2)^2+\fracp{1511831}{6248}b_2^2\right)>0$.
    Hence by Theorem \ref{CNSintnilp} statement {\bf c)}, the system is not integrable.
\end{description}
\end{proof}

\vspace{0.5truecm}

\noindent {\bf Acknowledgments.} The first and second authors are partially supported by a MINECO/FEDER
grant number MTM2014-56272-C2-2 and by the \emph{Consejer\'{\i}a de Educaci\'on y
Ciencia de la Junta de Andaluc\'{\i}a} (projects P12-FQM-1658, FQM-276). The
third author is partially supported by a MINECO/FEDER grant number MTM2017-84383-P
and by an AGAUR (Generalitat de Catalunya) grant number 2017SGR 1276.

\end{document}